\newtheorem{conj}{Conjecture}
\newtheorem{theorem}[conj]{Theorem}
\newtheorem{lem}[conj]{Lemma}
\newtheorem{cor}[conj]{Corollary}
\newtheorem{obs}[conj]{Observation}
\newtheorem{claim}{Claim}
\newtheorem*{mainresredux}{Main Results}
\newtheorem*{keylempr}{Key Lemma, precise version}
\theoremstyle{definition}
\newtheorem*{remark}{Remark}
\newtheorem{defi}[conj]{Definition}
\newtheorem{example}[conj]{Example}
\newtheorem{question}[conj]{Question}
\newcommand\dist{\textrm{dist}}
\newcommand\vecC{\vec{C}}
\newcommand\vecD{\vec{D}}
\newcommand*{\myproofname}{Proof}
\newenvironment{claimproof}[1][\myproofname]{\begin{proof}[#1]}{\end{proof}}
\DeclareMathOperator{\diam}{diam}
\renewcommand{\a}{\alpha}
\renewcommand{\b}{\beta}
\renewcommand{\c}{\gamma}
\newcommand{\C}{\mathcal{C}}
\newcommand{\Co}[2]{\C(#1,#2)}
\newcommand{\CLG}{\Co{G}{L}}
\newcommand{\CkG}{\Co{G}{k}}
\newcommand{\hC}{\widehat{\C}}
\newcommand{\hCo}[2]{\hC(#1,#2)}
\newcommand{\hCLG}{\hCo{G}{L}}
\newcommand{\pl}[1]{$+#1$-list}
\newcommand{\pla}[1]{$+#1$-list-assignment}
\tikzstyle{uStyle}=[shape = circle, minimum size = 6.5pt, inner sep = 0pt,
\tikzstyle{sStyle}=[shape = rectangle, minimum size = 4.5pt, inner sep = 0pt,
\tikzstyle{lStyle}=[shape = circle, minimum size = 4.5pt, inner sep = 0pt,
\tikzset{every node/.style=uStyle}
\title{Reconfiguration of List Colourings}
\author{Stijn Cambie\,\footnote{\,Department of Computer Science, KU Leuven Campus Kulak-Kortrijk, Belgium.
Supported by a FWO grant with grant number 1225224N.
Email: \href{mailto:stijn.cambie@hotmail.com}{\texttt{stijn.cambie@hotmail.com}}.}\qquad
Wouter Cames van Batenburg\,\footnote{\,D\'epartement d'Informatique, Universit\'e libre de Bruxelles, Belgium. Supported by the Belgian National Fund for Scientific Research (FNRS).
Email: \href{mailto:w.p.s.camesvanbatenburg@gmail.com}{\texttt{w.p.s.camesvanbatenburg@gmail.com}}.}\qquad
Daniel W. Cranston\,\footnote{\,Department of Computer Science, Virginia Commonwealth University, Richmond, Virginia, USA.
Email: \href{mailto:dcransto@gmail.com}{\texttt{dcransto@gmail.com}}.}\\[1mm]
Jan van den Heuvel\,\footnote{\,Deptartment of Mathematics, London School of Economics \& Political Science, London, UK.
Email: \href{mailto:j.van-den-heuvel@lse.ac.uk}{\texttt{j.van-den-heuvel@lse.ac.uk}}.}\qquad Ross J. Kang\footnote{\,Korteweg--de Vries Institute for Mathematics, University of Amsterdam, the Netherlands.
Partially supported by grant OCENW.M20.009 and the Gravitation Programme NETWORKS (024.002.003) of the Dutch Research Council (NWO).
Email: \href{mailto:r.kang@uva.nl}{\texttt{r.kang@uva.nl}}.}}
\date{}
\begin{document}

\maketitle

\begin{abstract}
    \noindent
Given a proper (list) colouring of a graph $G$, a recolouring step changes the colour at a single vertex to another colour (in its list) that is currently unused on its neighbours, hence maintaining a proper colouring.  Suppose that each vertex $v$ has its own private list~$L(v)$ of allowed colours such that $|L(v)|\ge \deg(v)+1$.
We prove that if $G$ is connected and its maximum degree $\Delta$ is at least $3$, then for any two proper $L$-colourings in which at least one vertex can be recoloured, one can be transformed to the other by a sequence of $O(|V(G)|^2)$ recolouring steps.
We also show that reducing the list-size of a single vertex $w$ to $\deg(w)$ can lead to situations where the space of proper $L$-colourings is `shattered'.  Our results can be interpreted as showing a sharp phase transition in the Glauber dynamics of proper $L$-colourings of graphs.

This constitutes a `local' strengthening and generalisation of a result of Feghali, Johnson, and Paulusma, which considered the situation where the lists are all identical to $\{1,\ldots,\Delta+1\}$.
\end{abstract}

\section{Introduction}

One of the most basic results in graph theory is that the vertices of a graph $G$ can be properly coloured with at most $\Delta+1$ colours, where $\Delta$ is the maximum degree of $G$.
This remains true if instead we provide every vertex $v$ with a private \emph{list} $L(v)$ of at least $\deg(v)+1$ colours to choose from (where $\deg(v)$ denotes the number of neighbours of $v$).
This means that the collection of proper colourings is non-empty.
What does this collection look like?
We are especially interested in how `closely linked' its colourings are to each other.
For the situation above, we show that a vast majority of all colourings are connected, in a precise way that we will discuss below.

Connectedness of \emph{all} colourings implies that a widely-studied Markov chain known as the \emph{Glauber dynamics} of (list) colourings is ergodic.
Our results demonstrate a fine balance essentially at the boundary of ergodicity; increasing or decreasing the number of available colours by one at even a single vertex changes the guaranteed component structure dramatically. 

To state our main results, we need some basic definitions.
Given a graph $G$, a \emph{list-assignment~$L$} is an assignment of lists of colours $L(v)\subseteq\mathbb{N}$ to each vertex $v$ of $G$.
A proper \emph{$L$-colouring} is a proper colouring $\c:V(G)\rightarrow \mathbb{N}$ such that $\c(v)\in L(v)$ for every $v\in V(G)$.
(We often omit the adjective `proper' when this does not lead to confusion.)
A (single vertex) \emph{recolouring} of a proper $L$-colouring~$\c$ changes the colour under $\c$ of one vertex $v$ to another colour of $L(v)$, such that the resulting colouring is again proper.
Such a recolouring is only possible if the new colour is not already used by~$\c$ on any of the neighbours of $v$.
A \emph{recolouring sequence} is a sequence of single vertex recolourings.
If no vertex of $G$ can be recoloured from $\c$, we call $\c$ \emph{frozen}; otherwise it is \emph{unfrozen}.

Given a graph $G$ and list-assignment $L$, the \emph{list colouring reconfiguration graph $\CLG$} has as vertices all proper $L$-colourings of $G$, and two $L$-colourings are adjacent if one can be obtained from the other by recolouring a single vertex.
The frozen $L$-colourings form singletons in $\CLG$ (components with just one vertex).
As our main focus is on the recolouring components of $L$-colourings, we may restrict our attention to the unfrozen $L$-colourings, and thus to the subgraph $\hCLG$ of $\CLG$ induced by such colourings.
If for all $v\in V(G)$ we have $L(v)=\{1,\ldots,k\}$ for some positive integer $k$, then we write $\CkG$ and~$\hCo{G}{k}$.

\begin{mainresredux}
	Let $G$ be a connected graph with $n$ vertices, and let $L$ be a list-assignment of $G$.
	\begin{list}{}{%
		\setlength{\parsep}{1pt}
		\setlength{\topsep}{2pt}
		\setlength{\labelsep}{2mm}
		\setlength{\labelwidth}{0.5\parindent}
		\setlength{\leftmargin}{\parindent}\setlength{\itemindent}{0pt}
		\setlength{\listparindent}{0pt}}
		\item[{\rm 1.}]\label{itm:keylemma}
		{\rm (\textbf{Key Lemma})}\quad
        If $|L(v)|\ge\deg(v)+1$ for all $v\in V(G)$ and $|L(w)|\ge\deg(w)+2$ for at least one $w\in V(G)$, then $\CLG$ is connected and has diameter $O(n^2)$.
		%any two proper $L$-colourings are connected by a recolouring sequence of length $O(|V(G)|^2)$.
		\item[{\rm 2.}]\label{itm:mainthm}
		{\rm (\textbf{Main Theorem})} \quad
        If $L|(v)|\ge\deg(v)+1$ for all $v\in V(G)$ and $G$ has maximum degree $\Delta\ge3$, then $\hCLG$ is connected and has diameter $O(n^2)$.
		%any two \textbf{unfrozen} proper $L$-colourings are connected by a recolouring sequence of length $O(|V(G)|^2)$.
%
		Moreover, if $\Delta=o(n^{1/4})$, then the number of frozen $L$-colourings is vanishingly small compared to the number of unfrozen ones.
		\item[{\rm 3.}]\label{itm:shatter}
		{\rm (\textbf{Shattering Observation})}\quad
		If $|L(w)|=\deg(w)$ for some $w\in V(G)$ and $|L(v)|\ge\deg(v)+1$ for all other $v\in V(G)$, then
        the number of components in $\hCLG$ can be exponential (in terms of~$n$).
	\end{list}
\end{mainresredux}

These results align with several active research areas in graph theory, theoretical computer science, and statistical physics.
Before going into details about those connections, we give a few observations about the results above.

The condition that $G$ is connected is essential in the Key Lemma, as otherwise we can have $L$-colourings that are frozen in some components of $G$ (and hence can't be recoloured on those vertices), but unfrozen in other components.

The condition that the maximum degree is at least $3$ in the Main Theorem is necessary.
For instance, the thirty $3$-colourings of the cycle $C_5$ are all unfrozen, but $\Co{C_5}{3}$ has two connected components with  fifteen $3$-colourings each.
In fact, all cycles of length at least $5$ and paths of length at least $6$ show similar behaviour; 
for more on this,
see Subsection~\ref{sec4.1}.

The two diameter bounds are sharp up to a multiplicative constant, even when $G$ is restricted to being a tree.
This is the case as any subpath of degree-$2$ vertices of length $\Omega(n)$ is enough to ensure that $\hCLG$ has  $\Omega(n^2)$ diameter.
We remark here that a subset of the authors~\cite{CCvBC} showed that with lists of size $\deg(v)+2$ for each vertex $v$, the diameter of $\C(G,L)$ is at most $2n$.

In the final section,
we discuss further conditions that could strengthen our results.

\medskip
In `classical' chromatic graph theory we are typically interested in sharp  conditions that ensure that $G$ is $k$-colourable; i.e., that $\CkG$ is non-empty.
As mentioned above, a basic example is the condition $k\ge\Delta+1$, which is sharp when $G$ is a clique or an odd cycle.
By Brook's Theorem~\cite{brooks}, these are the only cases for connected $G$ where we actually need $\Delta+1$ colours.

Arguably, `classical' graph colouring reconfiguration theory has followed a similar path, through a focus instead on sharp conditions that ensure that $\CkG$ is connected.
Jerrum~\cite{jerrum} showed that $\CkG$ is connected whenever $k\ge \Delta+2$.
This result follows from a simple inductive proof, as we elaborate upon later.

By contrast, there exists a connected~$G$ with a $(\Delta+1)$-colouring that admits no recolouring step; and then, by permutation of the colours, $\Co{G}{\Delta+1}$ must contain at least $(\Delta+1)!$ singleton components.
The simplest examples are cliques and cycles with length divisible by $3$, but there are many others, e.g.\ the $3$-cube $Q_3$.
More generally, take any $(\Delta+1)$-partite graph in which each vertex has exactly one neighbour in each part, but its own; when we use the same colour on all vertices in each part, no recolouring step is possible.
For connected $G$, is having a frozen $(\Delta+1)$-colouring also a necessary condition for $\Co{G}{\Delta+1}$ to be disconnected? Feghali, Johnson, and Paulusma~\cite{FJP} showed that this is indeed the case, and in an even stronger sense as follows.

\begin{theorem}[Feghali, Johnson, and Paulusma \cite{FJP}]
\label{thm:fjp}
	If $G$ is a connected graph with $n$ vertices and maximum degree $\Delta\ge3$,
	then $\hCo{G}{\Delta+1}$ is connected and has diameter $O(n^2)$.
\end{theorem}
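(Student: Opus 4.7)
My plan is to split on whether $G$ is $\Delta$-regular, reducing to the Key Lemma whenever possible. With the uniform list $L(v)=\{1,\ldots,\Delta+1\}$, a vertex $w$ satisfies $|L(w)|\ge\deg(w)+2$ precisely when $\deg(w)\le\Delta-1$, so the Key Lemma applies to $(G,L)$ directly whenever $G$ has a vertex of non-maximum degree.

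\textbf{Non-regular case.} If some vertex $w$ has $\deg(w)<\Delta$, I apply the Key Lemma to conclude that $\Co{G}{\Delta+1}$ is connected with diameter $O(n^2)$. Any frozen colouring is an isolated vertex of $\Co{G}{\Delta+1}$, so connectedness forces $\Co{G}{\Delta+1}$ either to consist of a single frozen colouring (in which case $\hCo{G}{\Delta+1}$ is empty and the statement is vacuous) or to contain no frozen colourings at all (in which case $\hCo{G}{\Delta+1}=\Co{G}{\Delta+1}$ and the conclusion transfers directly).

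\textbf{Regular case.} Assume now $G$ is $\Delta$-regular. If $G=K_{\Delta+1}$ then every $(\Delta+1)$-colouring is a bijection to the colour set and hence frozen, so $\hCo{G}{\Delta+1}$ is empty and the statement is again vacuous. Otherwise I pick a non-cut vertex $v$ of $G$ (such a vertex exists in every connected graph on at least two vertices), so that $G':=G-v$ remains connected. In $G'$ each neighbour $u$ of $v$ has $\deg_{G'}(u)=\Delta-1$ and hence $|L(u)|=\Delta+1\ge\deg_{G'}(u)+2$, while every other vertex retains $|L(u)|\ge\deg_{G'}(u)+1$. The Key Lemma therefore applies to $G'$, allowing us to recolour between the restrictions $\alpha|_{G'}$ and $\beta|_{G'}$ of any two unfrozen $(\Delta+1)$-colourings $\alpha,\beta$ of $G$ in $O(n^2)$ steps of $\Co{G'}{\Delta+1}$. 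I then lift that sequence to $\Co{G}{\Delta+1}$ by inserting, whenever a step in $G'$ wants to assign a neighbour of $v$ the current colour of $v$, a preceding recolouring of $v$ to some other colour (always available because $|L(v)|=\Delta+1>\deg(v)$ leaves at least one colour free at $v$).

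The hard part will be executing this lift so that every intermediate colouring of $G$ is not only proper but also \emph{unfrozen}. Properness is handled by the pigeonhole step at $v$ just mentioned, but preserving unfrozen-ness is much more delicate; this is precisely where the hypothesis $\Delta\ge3$ is essential, since for $\Delta=2$ the cycle examples mentioned after the Main Results show that frozen bottlenecks can split $\hCo{G}{3}$ into several components. For $\Delta\ge3$ the extra room around $v$ should allow one to detour around any frozen configuration that would otherwise arise, if necessary by routing the sequence through an auxiliary reference colouring such as the Brooks $\Delta$-colouring of $G$ (guaranteed since $G\ne K_{\Delta+1}$). Each lifted step costs at most a constant number of extra recolourings, so the total diameter remains $O(n^2)$.
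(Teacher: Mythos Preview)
Your non-regular case is correct: when some vertex has degree less than $\Delta$, the Key Lemma applies directly to $(G,L)$, and since that vertex is never frozen there are no frozen colourings at all, so $\hCo{G}{\Delta+1}=\Co{G}{\Delta+1}$.

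The regular case, however, has a genuine gap. You assert that during the lift $v$ can always be recoloured ``because $|L(v)|=\Delta+1>\deg(v)$ leaves at least one colour free at $v$.'' But $\deg(v)=\Delta$, so $|L(v)|=\deg(v)+1$, not $\deg(v)+2$; this only guarantees that \emph{some} colour of $L(v)$ is absent from $N(v)$, and that colour may well be $v$'s current one. Concretely, if the $\Delta$ neighbours of $v$ currently carry all colours of $\{1,\dots,\Delta+1\}\setminus\{c\}$ where $c$ is $v$'s colour, and the $G'$-sequence next asks to place $c$ on a neighbour of $v$, you can neither recolour $v$ nor perform the step. You also misdiagnose the residual difficulty as ``keeping intermediate colourings unfrozen'': that is automatic, since any colouring reached by a valid recolouring step can be undone and is therefore unfrozen. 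The real obstacle is precisely the validity of the lift, and your appeals to detours or to a Brooks $\Delta$-colouring are not a proof --- you give no mechanism for reaching or exploiting such a reference colouring from within the lifted sequence, and you never use $\Delta\ge 3$ in any concrete way.

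The paper's route in the regular case is substantially different and sidesteps the lifting problem entirely. Rather than deleting one vertex, it removes a \emph{very good pair} $w_1,w_2$: non-adjacent neighbours of some vertex $z$ that carry the same colour, with $G-\{w_1,w_2\}$ connected. Then $z$ genuinely acquires a $+2$-list in $G-\{w_1,w_2\}$ (two neighbours gone but only one colour to forbid), so the Key Lemma applies there with no lifting needed. The technical work lies in producing such a pair from each unfrozen colouring (pigeonhole on an unfrozen $\Delta$-vertex, together with a $2$-connectivity analysis to keep the graph connected after removal) and in transferring between the pair determined by $\alpha$ and the pair determined by $\beta$ via a short chain of intermediate colourings. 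When $G$ has cut-vertices, the paper handles endblocks first and proceeds by induction along the block tree.
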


Determining whether an arbitrary connected $\Delta$-regular $G$ has a frozen $(\Delta+1)$-colouring appears to be non-trivial. 
However, this characterisation for disconnectedness of $\Co{G}{\Delta+1}$ given in \Cref{thm:fjp} can be viewed as a reconfiguration parallel to Brooks' simple characterisation for emptiness of $\Co{G}{\Delta}$.
The diameter bound of $O(n^2)$ was recently improved by Bousquet, Feuilloley, Heinrich, and Rabie~\cite{BFHR} to $O(n)$ (albeit with a leading constant factor that hides a superexponential dependency upon~$\Delta$).
As mentioned earlier, a subset of the present authors~\cite{CCvBC} recently showed a $2n$ bound on the diameter of $\Co{G}{\Delta+2}$ (improving considerably on the bound implicit in Jerrum's original argument), and conjectured a~$1.5n$ diameter bound.

Our work draws the same parallel, but in the refined setting where each vertex gets a private or `local' list of colours.
Colouring greedily shows the following.
If $G$ is connected and $L$ is a list-assignment such that $|L(v)|\ge \deg(v)$ for all $v\in V(G)$ and $|L(w)|\ge \deg(w)+1$ for at least one $w\in V(G)$, then $G$ has an $L$-colouring.
The Key Lemma constitutes a reconfiguration parallel of this observation.
Furthermore, a classic result independently due to Borodin~\cite{borodin-list} and Erd\H{o}s, Rubin, and Taylor~\cite{ERT} states that if $G$ is connected and $L$ is a list-assignment such that$|L(v)|\ge \deg(v)$ for all $v\in V(G)$, then $G$ has an $L$-colouring unless $G$ has a specific structure (a mild generalisation of a tree called a \emph{Gallai tree}) together with a specific list-assignment $L$.
The Main Theorem constitutes a reconfiguration parallel of this `local' Brooks'-type characterisation.

Following on the work in~\cite{borodin-list,ERT}, there has been extensive research in chromatic graph theory for this notion of `local' colouring: for triangle-free graphs \cite{BKNP22,DJKP20}, Reed's Conjecture \cite{CKPS13,Kin09thesis}, disjoint list colourings~\cite{CCDK24}, and fractional colourings~\cite{KP24}, to name a few examples.

\medskip
One inspiration for research on recolouring graphs comes from the theory of approximate counting, which has deep links via randomised algorithms to statistical physics.
A unifying topic is the Glauber dynamics for proper colourings of a graph.
In physical applications, the graph usually assumes a lattice structure, we consider the colours as spins in the system, and take an idealised extreme temperature regime, so that no two neighbouring vertices (sites in the lattice crystal) may share the same colour (spin).

In the Glauber dynamics of proper $k$-colourings of a graph $G$, at each step some vertex $v\in V(G)$ and some colour $c\in\{1,\ldots,k\}$ are chosen uniformly at random.
If $v$ can be recoloured with colour~$c$ such that the colouring remains proper, then the Markov chain takes this colouring as its new state; otherwise it remains unchanged.
As mentioned earlier, Jerrum~\cite{jerrum} noticed that if $k\ge \Delta+2$, then the state space of this chain is connected.
Since this chain is also aperiodic and symmetric, it converges to an equilibrium that equals the uniform distribution over all proper $k$-colourings,  by the ergodic theorem; see~\cite{Jer03book}.
This probability distribution is an important global parameter of the system, since it is equivalent to the number of proper $k$-colourings.
Jerrum~\cite{jerrum} moreover conjectured that this process converges rapidly (within a number of steps polylogarithmic in the number of states) to equilibrium, which would imply an efficient randomised approximation scheme for estimating the number of proper $(\Delta+2)$-colourings; \cite{FrVi07} provides wider context.
Importantly, Jerrum~\cite{jerrum} (see also~\cite{SaSo97}) further showed this conjecture to be true up to a constant factor increase in $k$.
Significant effort has gone towards reducing the constant factor~\cite{CaVi25,CDMPP19,Vig00}, even if proving rapid mixing for all $k\ge\Delta+2$ still seems far out of reach.

Theorem~\ref{thm:fjp} may be interpreted as follows.
For connected $G$, if $k\ge \Delta+1$, then the Glauber dynamics of proper $k$-colourings is ergodic, provided the chain is restricted to the unfrozen colourings.
Bonamy, Bousquet, and Perarnau~\cite{BBP21} showed moreover that the frozen colourings constitute a vanishingly small proportion of all proper $k$-colourings.
This means that, provided we check that the initial colouring is unfrozen (which is the case nearly always), the Glauber dynamics converges to an almost uniform sample from the space of proper $k$-colourings.
Note that the additional assertion in our Main Theorem is a partial strengthening of this result of Bonamy {et al}.
Furthermore, the fact that the diameter of the unfrozen part of the state space is $O(n)$~\cite{BFHR} lends credence to the idea that the rapid mixing conjecture might still hold with $\Delta+1$ colours, provided we restrict to the unfrozen colourings.
Note that Bonamy et al.~\cite{BBP21} have also shown that the mixing time for this problem is $\Omega(n^2)$. 

For the obvious generalisation of the Glauber dynamics to list colourings, at each step a vertex~$v$ and a colour $c\in L(v)$ are chosen uniformly at random.
Again, the Markov chain takes the new colouring as its new state as long as it is proper; otherwise it remains unchanged.
Our Main Results may be viewed in terms of the Glauber dynamics of proper $L$-colourings of some connected~$G$, provided the list-size conditions for $L(v)$ are $\deg(v)+1$ for all vertices $v$ except possibly for one.
Our Main Theorem says that the Glauber dynamics on unfrozen $L$-colourings is ergodic provided all of the list-sizes are at least $\deg(v)+1$ for all vertices $v$ and the maximum degree $\Delta$ is at least~$3$; and the rest of the state space is negligible if $\Delta$ is not too large.
Our Key Lemma says that there are no frozen $L$-colourings if in addition we require one of the vertices to have a strictly larger list-size.
Our Shattering Observation demonstrates that if even merely one vertex $v$ has a smaller list-size of $\deg(v)$, then the state space may fracture into many disconnected pieces.
In such a situation, a more `global' dynamics --- one that can recolour more than one vertex at a time --- is necessary to approximately uniformly sample from the proper $L$-colourings.

To underscore the `local' perspective we have adopted in this work, specific to the Glauber dynamics of colourings, we propose the following strengthening of the rapid mixing conjecture.

\begin{conj}
\label{conj:rapid}
    Let $G$ be a connected graph, and let $L$ be a list-assignment of $G$.
    If $|L(v)|\ge\deg(v)+2$ for all $v\in V(G)$, then the Glauber dynamics for proper $L$-colourings of $G$ mixes rapidly.
\end{conj}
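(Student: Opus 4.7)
The plan is to combine the structural reconfiguration guarantees proved in this paper with modern analytic machinery for bounding mixing times of Markov chains. Under the hypothesis $|L(v)|\ge\deg(v)+2$, the Key Lemma already implies that $\CLG$ is connected (and in fact coincides with $\hCLG$, since no vertex can be frozen) and has diameter $O(n^2)$; together with aperiodicity and reversibility of the chain this yields ergodicity. What remains is to upgrade the polynomial diameter bound to a polylogarithmic mixing time.

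First I would try a direct path-coupling argument in the spirit of Bubley and Dyer. Fix two $L$-colourings $\c,\c'$ that differ at a single vertex $v$, propose a common update by choosing a vertex $u$ uniformly at random and a colour $x$ uniformly at random from $L(u)$, and analyse the expected change in Hamming distance after one step. The local slack $|L(u)|-\deg(u)\ge 2$ at every $u$ should allow the disagreement at $v$ to be absorbed in expectation, but a naive calculation yields contraction only when $|L(v)|\ge 2\deg(v)$. Closing the gap will therefore require some refinement, for instance a weighted metric (with weights depending on $|L(u)|$ and $\deg(u)$), a block or heat-bath update rule, or a list-adapted analogue of the flip dynamics of Vigoda.

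A more promising route is to establish spectral independence in the sense of Anari--Liu--Oveis Gharan, and then invoke the entropy-factorisation / modified log-Sobolev framework developed by Chen--Feng--Yin--Zhang and successors to convert this into an $O(n\log n)$ mixing time. Concretely, the goal would be a local analogue of the existing uniform-list spectral-independence theorems: for every vertex $v$ and every pinning of the remaining vertices to a partial $L$-colouring, the row sums of the pairwise-influence matrix should be uniformly bounded by a constant depending only on $\min_u(|L(u)|-\deg(u))$. The central technical step is then to establish a sufficiently strong correlation decay driven by the local slack, presumably by coupling two conditional distributions and tracking how disagreements propagate along paths in $G$, with the bound $|L(u)|\ge\deg(u)+2$ used at every vertex along the way.

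The main obstacle, and the reason the statement is posed as a conjecture rather than proved, is that the threshold $|L(v)|=\deg(v)+2$ is strictly tighter than what is currently achievable even in the uniform-list case: rapid mixing is not known for $k=\Delta+2$, which is the long-standing conjecture of Jerrum, and the best unconditional results require $k$ somewhat larger than $\Delta$. Any positive resolution of Conjecture~\ref{conj:rapid} would therefore implicitly resolve Jerrum's conjecture. A realistic intermediate target is a quantitatively weakened local version, such as $|L(v)|\ge\alpha\,\deg(v)+\beta$ for some explicit constants $\alpha>1$ and $\beta\ge 0$, obtained by transferring the best uniform-list techniques to the list setting and verifying that each appearance of $\Delta$ in those arguments can be localised to $\deg(v)$ at the corresponding vertex.
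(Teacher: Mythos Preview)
The statement you are addressing is a \emph{conjecture} in the paper, not a theorem; the paper offers no proof and explicitly presents it as an open problem (a proposed local strengthening of Jerrum's rapid-mixing conjecture). So there is no ``paper's own proof'' to compare against.

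Your write-up is accordingly not a proof but a research outline, and you yourself identify the essential obstruction: any proof at the threshold $|L(v)|\ge\deg(v)+2$ would in particular settle the uniform case $k=\Delta+2$, which is Jerrum's long-standing open conjecture. That observation is correct and is exactly why the paper leaves the statement as a conjecture. The approaches you sketch (path coupling, spectral independence, flip dynamics) are the natural ones, and your remark that naive path coupling only contracts for $|L(v)|\ge 2\deg(v)$ is accurate; but none of these currently reaches the stated threshold, so what you have written is a reasonable discussion of the landscape rather than a proof, and should be labelled as such.
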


In a sense, this conjecture has already been considered in the (randomised) algorithms community.
More specifically, many of the approaches towards the uniform version of the conjecture~\cite{CaVi25,CDMPP19,Vig00} also seem to yield local analogues.
Moreover, Chen, Liu, Manu, and Moitra~\cite{CLMM23} have shown that Conjecture~\ref{conj:rapid} holds when the lists are just slightly larger, of size $\deg(v)+3$ for each vertex $v$, and $G$ has sufficiently large girth.
Indeed, local colouring has arisen in various algorithmic contexts (and naturally so, given how efficient colouring algorithms are often based upon local arguments), not just in approximate counting~\cite{CLMM23,GKM15}, but also in distributed computing \cite{CPS17,EJM23,HKNT22} and graph sparsification~\cite{AlAs20,FGHKN24}, for instance.

\subsection{Proof Ideas}

In this subsection we outline some of the key ideas used to prove our results.

\begin{remark}
In the remainder of the paper, unless otherwise stated, we assume that all graphs encountered are connected and all (list) colourings are proper.
\end{remark}

Given a list-assignment $L$ of a graph $G$ and an $L$-colouring $\c$, we say that a vertex $v$ is \emph{frozen under $\c$} if $v$ cannot be recoloured (i.e., if all colours in $L(v)\setminus\{\c(v)\}$ appear among the neighbours of $v$); otherwise it is \emph{unfrozen}.
Following the earlier definition, this means a colouring~$\c$ is frozen if all vertices are frozen under $\c$, and $\c$ is unfrozen if at least one vertex is unfrozen.

Informally, an unfrozen vertex remains unfrozen when we restrict a list-assignment to an induced subgraph containing it.
We formalize this intuition in the following observation, which we use frequently and implicitly throughout the rest of the paper.

\begin{obs}
\label{lem:restrictedontosubgraph}
    Let $G$ be a graph with a list-assignment $L$, and let $\a$ be an $L$-colouring of $G$.
    Given a set of vertices $W\subseteq V(G)$, we form a list-assignment $L_W$ for $G[W]$ from $L$ by removing the colours of the neighbours, i.e., $L_W(v):=L(v)\setminus \bigcup_{w\in N_G(v)\setminus W} \a(w)$ for all $v\in W$.
    Let $\a_W$ denote the $L_W$-colouring formed by restricting $\a$ to $G[W]$.
    Fix $x,y \in W$.
    If $y$ is unfrozen under~$\a$ then it is unfrozen under $\a_W$.
    Moreover, if $\a_W$ can be recoloured to an $L_W$-colouring of $G[W]$ such that both $x$ and $y$ are unfrozen, then the same recolouring sequence (lifted to $G$)  recolours $\a$ to an $L$-colouring of~$G$ such that $x$ and $y$ are unfrozen.
\end{obs}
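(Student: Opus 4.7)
The plan is to unpack the definitions and verify each claim directly, since the observation is essentially a bookkeeping statement about how colour-exclusions interact with restriction to an induced subgraph. The key invariant to keep in mind throughout is that, by construction, $L_W(v) \subseteq L(v)$, and every colour removed from $L(v)$ when forming $L_W(v)$ is a colour used by $\a$ on a neighbour of $v$ in $G \setminus W$.

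For the first assertion, I would fix a witness. If $y$ is unfrozen under $\a$, there exists $c \in L(y) \setminus \{\a(y)\}$ such that $c$ does not appear on any neighbour of $y$ in $G$. In particular $c \neq \a(w)$ for every $w \in N_G(y) \setminus W$, so $c$ survives in $L_W(y)$; and $c$ also does not appear on any neighbour of $y$ in $G[W]$ under $\a_W$. Hence $c$ witnesses that $y$ is unfrozen under $\a_W$.

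For the second assertion, I would argue in two steps. First, that the lifted sequence is a valid recolouring in $G$: when a step recolours some $v \in W$ from $c$ to $c'$ in $G[W]$, we have $c' \in L_W(v) \subseteq L(v)$, and $c'$ does not match $\a(w)$ for $w \in N_G(v) \setminus W$ by definition of $L_W$; since vertices outside $W$ retain their $\a$-colours throughout the entire lifted sequence, $c'$ does not appear on any neighbour of $v$ in $G$ at that moment, so the step is a legal recolouring of $G$. Second, that $x$ and $y$ remain unfrozen under the resulting colouring $\b$ of $G$: if $c_x \in L_W(x) \setminus \{\b_W(x)\}$ witnesses that $x$ is unfrozen under $\b_W$ in $G[W]$, then (by the same reasoning as above) $c_x \in L(x)$, $c_x$ avoids the $\a$-colours of neighbours of $x$ outside $W$, and $c_x$ avoids the $\b_W$-colours of neighbours of $x$ in $W$; since $\b$ agrees with $\a$ on $V(G) \setminus W$ and with $\b_W$ on $W$, the colour $c_x$ is available to recolour $x$ under $\b$, so $x$ is unfrozen. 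The argument for $y$ is identical.

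There is no real obstacle here; the only point that could cause trouble is inadvertently mixing up which $\a$ or $\b$ is used to read off colours on vertices outside $W$. I would emphasise explicitly that vertices outside $W$ are untouched by the lifted sequence, which is what makes the colour-exclusions in $L_W$ remain valid exclusions throughout.
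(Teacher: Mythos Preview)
Your proposal is correct and follows essentially the same approach as the paper's own proof: pick a witness colour for unfrozenness and check it survives restriction, then observe that $L_W$-colourings of $G[W]$ combine with $\a|_{V(G)\setminus W}$ to give valid $L$-colourings of $G$ under which the same witnesses work. Your version is simply more explicit about why each lifted recolouring step remains legal and about the invariant that vertices outside $W$ retain their $\a$-colours throughout; the paper compresses all of this into two sentences.
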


\begin{proof}
If $y$ is unfrozen under $\a$, then some colour $c \in L(y)$ is unused by $\a$ on $N_G[y]$.
Therefore $c\in L_W(y)$, so $y$ is unfrozen under $\a_W$.

For the second statement, note that every $L_W$-colouring of $G[W]$ is also an $L$-colouring of $G[W]$, and every vertex that is unfrozen under some $L_W$-colouring~$\c$ of $G[W]$ is also unfrozen under the $L$-colouring of $G$ formed by the union of $\c$ and $\a|_{V(G)-W}$.
\end{proof}

Most of our arguments do not use the actual contents of the lists in a given list-assignment, but only the size of the lists, and in particular how much larger a particular list $L(v)$ is compared to the degree of the vertex $v$.
To help describe this more compactly, given some non-negative integer $t$ and a list-assignment $L$, we say that a vertex $v$ has a \emph{\pl{t}} (with respect to~$L$) if $|L(v)| \ge \deg(v)+t$, and that $L$ is a \emph{\pla{t}} if every vertex in $G$ has a \pl{t}.

We motivate our methodology by describing a useful step towards understanding the characterisation for  degree-choosability mentioned earlier (see~\cite{borodin-list,ERT}).
That is, for which graphs~$G$ is there always guaranteed to be an $L$-colouring for any given \pla{0} $L$ of $G$?
We can easily prove that if $L$ is instead a \pla{0} of $G$ such that some  $v\in V(G)$ has a \pl{1}, then $\CLG$ is non-empty.
This is by induction on the number of vertices.
If $L_{-v}$ denotes the restriction of $L$ to $G-v$, then note that any neighbour of $v$ has a \pl{1} with respect to $L_{-v}$.
In particular, for any component of $G-v$, $L_{-v}$ is a \pla{0} in which at least one vertex has a \pl{1}.
Thus, by the inductive hypothesis, $\Co{G-v}{L_{-v}}$ is non-empty.
Since~$v$ has a \pl{1} with respect to $L$, any element of $\Co{G-v}{L_{-v}}$ can be extended by colouring $v$ with an available colour from~$L(v)$ to an $L$-colouring of $G$, completing the induction.

Along the same parallel we proposed earlier on page~\pageref{thm:fjp}, we can obtain with the same idea a weaker form of the Key Lemma, one where we are not concerned by a bound on the diameter.
Again this is by induction on the number of vertices.
Suppose that $L$ is a \pla{1} of~$G$ such that some vertex $v$ has a \pl{2}.
If $L_{-v}$ denotes the restriction of $L$ to $G-v$, then note that any neighbour $u$ of $v$ has a \pl{2} with respect to $L_{-v}$.
In particular, for any component of $G-v$, $L_{-v}$ is a \pla{1} in which at least one vertex has a \pl{2}.
Thus, by the inductive hypothesis applied on each component, $\Co{G-v}{L_{-v}}$ is connected.
Now let $\a$ and~$\b$ be two elements of $\CLG$.
By the arguments above, their restrictions $\a_{-v}$ and $\b_{-v}$ to $G-v$ are connected in $\Co{G-v}{L_{-v}}$ by an $L_{-v}$-recolouring sequence.
We essentially `extend' this to an $L$-recolouring sequence between $\a$ and $\b$, except that we often may have to recolour~$v$ to another available colour in $L(v)$ to avoid some conflict on one of its neighbours before taking the next step in the $L_{-v}$-recolouring sequence.
This is always possible because $v$ has a \pl{2} with respect to~$L$.
This argument yields a bound on $\diam(\CLG)$ that is exponential in $|V(G)|$.
Obtaining a quadratic bound requires considerably more care, and will be done in the next section.

\begin{keylempr}
	Let $G$ be a connected graph with $n$ vertices, and let $L$ be a list-assignment of $G$.
	If $|L(v)|\ge \deg(v)+1$ for all $v\in V(G)$ and $|L(w)|\ge \deg(w)+2$ for at least one $w\in V(G)$, then $\CLG$ is connected and has diameter at most $\tfrac12(3n^2+5n)$.
\end{keylempr}

\medskip
If $G$ is $2$-connected but does not have all lists identical, then our Main Theorem essentially follows from our Key Lemma.
We find neighbours $v,w$ with $L(v)\not\subseteq L(w)$, colour $v$ from $L(v)\setminus L(w)$, and apply the Key Lemma to $G-v$.
If $G$ has cut-vertices, then we recolour one `outermost' block (a leaf in the block tree) from $\a$ to $\b$, and finish by induction on the number of blocks.
This brings us to the following.

An important special case of our Main Theorem is when $G$ is $2$-connected and regular and $L(v)=\{1,\ldots,\Delta+1\}$ for all $v$.
(This is the $2$-connected case in the main result of Feghali et al.~\cite{FJP-brooks}.)
In his elegant proof of Brooks' Theorem, Lov\'{a}sz~\cite{lov1975} proved the following easy lemma:
If $G$ is $2$-connected with minimum degree at least $3$, then there exist
$v,w_1,w_2\in V(G)$ with $w_1,w_2\in N(v)$ and $w_1w_2\notin E(G)$ such that $G-\{w_1,w_2\}$ is connected.
To prove Brooks' Theorem, assign the same colour to $w_1$ and $w_2$, then colour greedily inward along a spanning tree $T$ in $G-\{w_1,w_2\}$ rooted at $v$.

To handle the special case above, we elaborate on this idea.
First, we show that from every unfrozen $(\Delta+1)$-colouring we can get to one that uses a common colour $c_1$ on some $w_1,w_2$ as above.
Second, we show that we can `transfer' from the class of $(\Delta+1)$-colourings that use $c_1$ on $w_1,w_2$ (neighbours of $w$), which are reachable from $\a$, to the class of $(\Delta+1)$-colourings that use some $c_1'$ on $w_1',w_2'$ (neighbours of some $w'$), which are reachable from $\b$.
To effect this transfer, we construct a colouring that lies in both classes.

\medskip
Our proof of the Main Theorem is self-contained.
So, in particular, it provides an alternative proof of the main result of Feghali et al.~\cite{FJP}.
In fact, if we aim only to recover that the recolouring diameter of $\hCo{G}{\Delta+1}$ is $O(|V(G)|^2)$, then we can significantly streamline the proof, essentially needing only the contents of Sections \ref{key-lem-sec} and \ref{regular-sec}, as well as Claims~\ref{clm2B} and~\ref{clm3B} in the proof of \Cref{connected-thm}.

We also point out that even when all lists have size $\Delta(G)+1$, which is equivalent to the case when~$G$ is $\Delta$-regular, our Main Theorem is a new result.

\subsection{Organisation}

The remainder of the paper is organised as follows.
In the next section, we prove the precise form of the Key Lemma.
Section~\ref{regular-sec} deals with the special case that $G$ is a $2$-connected regular graph in which all lists are identical.
The Main Theorem is proved in Section~\ref{connect-sec}.
That section starts with a tedious but necessary analysis of some simple cases, including paths and cycles.
Although paths and cycles do not satisfy the condition in the Main Theorem that the maximum degree is at least~3, we may encounter them when `breaking down' the block tree of graphs with connectivity $1$. 
Finally, in \Cref{sec:sharpness} we describe the examples for the Shattering Observation, pose some questions, and discuss possible directions for future research.
In particular, we discuss extending our results to \emph{correspondence colourings}, which appears surprisingly non-straightforward.

\section{Proof of the Key Lemma}
\label{key-lem-sec}

Let $G$ be a connected graph, and let $L$ be a \pla{1} of $G$ with some $v\in V(G)$ where $|L(v)|\ge \deg(v)+2$.
The next three lemmas allow us to recolour from an arbitrary $L$-colouring to one where a given vertex $w$ uses any specified colour $\b(w)$ in its list;  applying this repeatedly yields the Key Lemma.
The first lemma allows us to `push' unfrozenness from one vertex $v$ to another vertex $w$ along a shortest $v,w$-path.
The second lemma allows us to recolour so that either $w$ uses~$\b(w)$, or else $\b(w)$ is used on a single neighbour $x$ of $w$ and both $w$ and $x$ are frozen.
The third lemma allows us to unfreeze $w$ or $x$ to move past the possible `bad' outcome of the second lemma, and ultimately recolour $w$ with $\b(w)$.

\begin{lem}
\label{lem:unfreezeOneVertex_listsize_deg+2}
    Let $G$ be a connected graph, $L$ be a \pla{1} for $G$, and $\a$ be an $L$-colouring.
    Fix $v,w\in V(G)$, let $d:=\dist(v,w)$, and let $P$ be a shortest $v,w$-path.
    If $v$ is unfrozen in $\a$, then there exists a recolouring sequence $\mathcal{S}$ from $\a$ to some
    $L$-colouring $\c$ such that
    (i) $w$ is unfrozen under $\c$,
    (ii) every vertex that is recoloured during $\mathcal{S}$ belongs to $V(P)\setminus \{w\}$, and
    (iii) $|\mathcal{S}| \leq d$.
    Moreover, note that if $|L(v)|\geq \deg(v)+2$, then $v$ is also unfrozen under $\c$.
\end{lem}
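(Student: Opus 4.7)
The plan is to induct on $d = \dist(v,w)$. The base case $d=0$ is immediate: then $v = w$, and taking $\c := \a$ with the empty recolouring sequence satisfies (i)--(iii), since $v$ is unfrozen under $\a$ by assumption.

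For the inductive step with $d \geq 1$, write $P = v\, p_1\, p_2\, \cdots\, p_{d-1}\, w$ and let $P'$ denote the sub-path $p_1\, p_2\, \cdots\, p_{d-1}\, w$, which is a shortest $p_1,w$-path of length $d-1$. I would split on whether the neighbour $p_1$ of $v$ is already unfrozen under $\a$. If so, apply the inductive hypothesis to $\a$ along $P'$ directly; no recolouring at $v$ is needed, and the resulting sequence has length at most $d-1$ and only touches $V(P') \setminus \{w\} \subseteq V(P) \setminus \{w\}$.

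If instead $p_1$ is frozen under $\a$, the plan is to first perform a single preliminary recolouring at $v$ chosen to unfreeze $p_1$, and then invoke induction. Unfrozenness of $v$ provides a colour $c \in L(v) \setminus \{\a(v)\}$ unused on $N(v)$; recolouring $v$ to $c$ produces a proper $L$-colouring $\a'$. The key point is that $p_1$ becomes unfrozen under $\a'$: since $p_1$ is frozen under $\a$ and $|L(p_1)| \geq \deg(p_1)+1$, every colour of $L(p_1) \setminus \{\a(p_1)\}$ must appear on $N(p_1)$, and pigeonholing the at least $\deg(p_1)$ such colours into the $\deg(p_1)$ neighbours forces each to appear exactly once; in particular $\a(v) \in L(p_1)$ occurs on $N(p_1)$ only at $v$, so after recolouring $v$ to $c$ the colour $\a(v)$ is free at $p_1$. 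Now invoke the inductive hypothesis with starting colouring $\a'$ along $P'$, and prepend the single recolouring of $v$. The combined sequence has length at most $1+(d-1) = d$ and only recolours vertices in $\{v\} \cup (V(P') \setminus \{w\}) \subseteq V(P) \setminus \{w\}$.

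For the moreover clause, I would just observe that in any proper $L$-colouring the closed neighbourhood $N[v]$ carries at most $\deg(v)+1$ distinct colours, so if $|L(v)| \geq \deg(v)+2$ there is always a colour in $L(v) \setminus \{\c(v)\}$ avoiding $N(v)$, which witnesses that $v$ is unfrozen under $\c$. I expect the main technical hurdle to be the pigeonhole step in the frozen case: one needs the tightness that a frozen vertex attaining the minimum possible list size has its non-self colours in bijection with its neighbours, which is precisely what makes recolouring $v$ actually free up $\a(v)$ at $p_1$. Everything else is routine bookkeeping of the length and support of the recolouring sequence.
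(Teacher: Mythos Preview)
Your proof is correct and follows essentially the same approach as the paper: push unfrozenness along the path $P$ one step at a time, using at most $d$ recolourings. The paper's version is more terse---it jumps directly to the \emph{largest} index $i$ with $v_i$ unfrozen, notes that recolouring $v_i$ unfreezes $v_{i+1}$, and inducts on $d-i$---whereas you induct on $d$ from the $v$-end and explicitly spell out the pigeonhole argument showing that recolouring $v$ actually frees $\a(v)$ at the frozen neighbour $p_1$; this extra detail is something the paper leaves implicit.
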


\begin{proof}
    Let $P=v_0\cdots v_d$, with $v_0=v$ and $v_d=w$.
    Let $i$ be the largest index with $v_i$ unfrozen; $i$ exists, since $v_0$ is unfrozen.
    Recolouring $v_i$ unfreezes $v_{i+1}$, and we finish by induction on $d-i$.
\end{proof}

\begin{lem}
\label{lem:recolouringToUniqueBadNeighbour}
    Let $G$ be a connected graph, and let $L$ be a \pla{1} for $G$.
    Let $\a, \b$ be two unfrozen $L$-colourings.
    For each vertex $w$, there exists a recolouring sequence $\mathcal{S}$ from $\a$ to some $L$-colouring $\c$ such that (i) either $\c(w)=\b(w)$, or there is precisely one vertex $x\in N(w)$ with $\c(x)=\b(w)$ and $w$ and $x$ are frozen, and (ii) $|\mathcal{S}|\leq  \deg(w)+2$, with equality only if $\c(w)=\b(w)$.
\end{lem}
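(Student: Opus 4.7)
The plan is to analyse the set $N^*(w) := \{y\in N(w): \a(y)=\b(w)\}$ and reduce to the situation $N^*(w)=\emptyset$, at which point $w$ can be directly recoloured to $\b(w)$. The case analysis is driven by the subset $F\subseteq N^*(w)$ of vertices frozen under $\a$.

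Three elementary observations guide the argument: first, $N^*(w)$ is an independent set (two vertices of $N^*(w)$ both carry $\b(w)$, so they cannot be adjacent), and hence recolouring one vertex of $N^*(w)$ does not affect the frozen status of any other; second, if $|N^*(w)|\ge 2$ then $w$ is unfrozen, since two of its neighbours duplicate $\b(w)$, leaving at most $\deg(w)-1$ distinct colours on $N(w)$ and hence a spare colour in $L(w)\setminus\{\a(w)\}$; third, if $y\in N^*(w)$ is frozen then $|L(y)|=\deg(y)+1$ and $L(y)\setminus\{\b(w)\}$ is in bijection with the colours on $N(y)$, so in particular $\a(w)\in L(y)$ and is carried within $N(y)$ only by $w$.

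The proof then proceeds in two phases. \emph{Phase~1:} recolour each unfrozen $y\in N^*(w)$ (in any order, safe by the first observation) to an available colour, which is automatically distinct from $\b(w)$; this uses $|N^*(w)|-|F|$ steps and leaves $F$ as the set of neighbours of $w$ still coloured $\b(w)$. \emph{Phase~2:} if $F$ is empty (including the trivial start case $\a(w)=\b(w)$), recolour $w$ to $\b(w)$ in one extra step, totalling at most $\deg(w)+1$; if $F=\{x\}$ and $w$ is frozen at this moment, stop immediately (the acceptable failure in condition~(i) of the statement is met, with at most $\deg(w)-1$ steps); otherwise $w$ is unfrozen (automatic when $|F|\ge 2$ by the second observation), and we pick $c^*\in L(w)\setminus\{\a(w)\}$ unused on $N(w)$ (hence $c^*\ne\b(w)$), recolour $w\mapsto c^*$, then recolour each $y\in F$ to $\a(w)$, and finally recolour $w\mapsto\b(w)$. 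This last subcase contributes $|F|+2$ extra steps, for a grand total of $|N^*(w)|+2\le\deg(w)+2$, with equality requiring $\c(w)=\b(w)$.

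The main obstacle is validating the last subcase of Phase~2. One needs that $c^*\ne\b(w)$ (automatic because $\b(w)$ is used on $F\subseteq N(w)$); that each $y\in F$ becomes unfrozen with $\a(w)$ available after $w\mapsto c^*$ (this is where the third observation is essential: the bijection says $\a(w)$ appears in $N(y)$ only at $w$, so moving $w$ off $\a(w)$ frees that colour for $y$); and that recolouring every vertex of $F$ to $\a(w)$ respects properness, which follows from the independence of $F$ (first observation) together with the fact that the Phase~1 recolourings happened on vertices not adjacent to any $y\in F$.
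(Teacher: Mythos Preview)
Your proof is correct and follows essentially the same approach as the paper: first recolour the unfrozen bad neighbours (your Phase~1, the paper's $B_1$), then if possible recolour $w$ to unfreeze the remaining bad neighbours (your $F$, the paper's $B_2$), recolour those, and finally give $w$ colour $\b(w)$. Your version is slightly more explicit---you spell out the three guiding observations and commit to the specific colour $\a(w)$ when recolouring vertices of $F$, whereas the paper just notes they become unfrozen and recolours them arbitrarily---but the structure and the step count are identical.
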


\begin{proof}
    Given $\a$ and $w$, a \emph{bad neighbour} is any neighbour of $w$ coloured $\b(w)$.
    We assume that $\a(w)\neq \b(w)$ and that $w$ has at least one bad neighbour under $\a$; otherwise we recolour $w$, if needed, and are done.
    Let $B_1$ denote the set of unfrozen bad neighbours, and $B_2$ the set of frozen bad neighbours.
    We first recolour each bad neighbour in $B_1$.
    Note that $B_1\cup B_2$ forms an independent set, so it cannot occur that recolouring some bad neighbour freezes another bad neighbour.
    Next we recolour $w$, if possible, which unfreezes all vertices in $B_2$.
    (In particular this is possible if $|B_2|\ge 2$, because fewer than $\deg(w)$ distinct colours are used by $\a$ on $N(w)$.)
    Now we recolour each bad neighbour in $B_2$.
    Finally, we recolour $w$ with $\b(w)$.
    This process succeeds in recolouring $w$ with $\b(w)$ unless $w$ has a single bad neighbour $x\in B_2$, and both $w$ and $x$ are frozen.
    Note that each neighbour of $w$ is recoloured at most once, and $w$ is recoloured at most twice (and twice only if $\c(w)=\b(w)$), so $|\mathcal{S}|\leq \deg(w)+2$, and the lemma holds.
\end{proof}

\begin{comment}
\begin{proof}
    Given $\a$ and $w$, a \emph{bad neighbour} is any neighbour of $w$ coloured $\b(w)$. We assume that
    $\a(w)\neq \b(w)$ and that $w$ has at least one bad neighbour under $\a$; otherwise we recolour $w$, if needed,
    and are done.  We first recolour each bad neighbour of $w$ that is unfrozen.  Next we recolour $w$, if possible, which
    unfreezes all of its bad neighbours.  (This includes the case that $w$ has at least 2 frozen bad neighbours, because fewer
    than $\deg(w)$ distinct colours are used by $\a$ on $N(w)$.) Now we recolour each bad neighbour. Finally, we recolour $w$
    with $\b(w)$.  This process succeeds in recolouring $w$ with $\b(w)$ unless $w$ has a single bad neighbour $x$, and both $w$
    and $x$ are frozen.  Note that each neighbour of $w$ is recoloured at most once, and $w$ is recoloured at most twice (and
    twice only if $\c(w)=\b(w)$), so $|\mathcal{S}|\leq \deg(w)+2$, and the lemma holds.
\end{proof}
\end{comment}

\begin{lem}
\label{fix-leaf-lem}
    Let $G$ be a connected graph, and let $L$ be a \pla{1} for $G$ such that $|L(v)|\geq \deg(v)+2$ for some vertex $v$.
    Let $\a, \b$ be two unfrozen $L$-colourings.
    Fix $w\in V(G)$, and let $P$ be a shortest $v,w$-path.
    There exists a recolouring sequence $\mathcal{S}$ from $\a$ to some $L$-colouring $\c$ such that
    (i) $\c(w)=\b(w)$,
    (ii) $v$ is unfrozen under $\c$,
    (iii) every vertex that is recoloured during $\mathcal{S}$ belongs to $V(P) \cup (N(w)\cap \a^{-1}(\b(w))$, and
    (iv) $|\mathcal{S}| \leq  \deg(w)+2+2|V(P)|$.
\end{lem}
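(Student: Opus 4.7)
The plan is to chain \Cref{lem:unfreezeOneVertex_listsize_deg+2} and \Cref{lem:recolouringToUniqueBadNeighbour}, and then, if necessary, repair the bad outcome of the latter with a short additional sequence. First, use \Cref{lem:unfreezeOneVertex_listsize_deg+2} to push unfrozenness from $v$ to $w$ along $P$: at most $|V(P)|-1$ recolourings inside $V(P)\setminus\{w\}$, keeping $v$ unfrozen via the ``moreover'' clause. Then apply \Cref{lem:recolouringToUniqueBadNeighbour} at $w$ with target $\beta(w)$: at most $\deg(w)+2$ recolourings inside $N[w]$. Any neighbour of $w$ recoloured here is either $v_{d-1}\in V(P)$ or a vertex of $N(w)\cap\alpha^{-1}(\beta(w))$, since non-$V(P)$ neighbours of $w$ were never touched before. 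If this succeeds in setting $w$'s colour to $\beta(w)$, set $\gamma$ to the result; the total is at most $\deg(w)+|V(P)|+1\le \deg(w)+2+2|V(P)|$.

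Otherwise we land in the bad case: $\gamma_2(w)\ne\beta(w)$, and a unique $x\in N(w)$ has $\gamma_2(x)=\beta(w)$, with both $w,x$ frozen, so by the ``equality only if'' clause Step~2 used at most $\deg(w)+1$ recolourings. The key structural fact is that any frozen vertex $u$ with $|L(u)|\ge\deg(u)+1$ actually has $|L(u)|=\deg(u)+1$, and its neighbours' colours are pairwise distinct and equal to $L(u)\setminus\{\gamma_2(u)\}$; applied at $w$ and at $x$, this gives precise control over what single recolouring on one of their neighbours restores unfrozenness. I now split on whether $x$ lies on $P$. In Case~A ($x=v_{d-1}$), push unfrozenness from $v$ to $v_{d-1}=x$ along the sub-path $v_0\cdots v_{d-1}$, recolour $x$ off of $\beta(w)$ (automatic since $x$'s old colour was $\beta(w)$), and then recolour $w$ to $\beta(w)$; the total is at most $\deg(w)+2|V(P)|$. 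In Case~B ($x\ne v_{d-1}$, so $x\in N(w)\cap\alpha^{-1}(\beta(w))$ since $x$ was never recoloured), push unfrozenness from $v$ to $v_{d-1}$ and then execute a four-move ``dance'' with $c_0:=\gamma_2(v_{d-1})$: recolour $v_{d-1}$, which removes $c_0$ from $N(w)$ and unfreezes $w$; recolour $w$ to $c_0$, which by the frozen analysis at $x$ removes $\gamma_2(w)$ from $N(x)$ and unfreezes $x$; recolour $x$ to $\gamma_2(w)$; finally recolour $w$ to $\beta(w)$. The total is at most $\deg(w)+2+2|V(P)|$.

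Throughout, constraint~(iii) holds by construction (every recoloured vertex is in $V(P)$ or equals the single vertex $x\in N(w)\cap\alpha^{-1}(\beta(w))$), and constraint~(ii) holds because $|L(v)|\ge\deg(v)+2$ always leaves at least two colours of $L(v)$ outside $N(v)$, supplying an unfrozen-witnessing colour at $v$ at every moment. The main technical hurdle I expect is verifying legality of the Case~B dance---specifically, that the new colour chosen for $v_{d-1}$ can be taken to differ from both $c_0$ and $\beta(w)$, so that $v_{d-1}$ does not block the final recolouring of $w$. The generic situation is covered by the frozen analyses at $w$ and at $x$ (which guarantee in particular that $\gamma_2(w)\in L(x)$ and appears only on $w$ in $N(x)$); the degenerate sub-case in which only $\beta(w)$ is available at $v_{d-1}$ forces a very rigid local structure and requires a slight reordering of the dance (using that once $w$'s colour has moved to $c_0$, the colour $\gamma_2(w)$ becomes available at $v_{d-1}$), with any extra move compensated within the stated budget by the fact that Step~2 in this rigid sub-case uses strictly fewer than $\deg(w)+1$ recolourings.
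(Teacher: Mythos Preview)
Your overall strategy—apply \Cref{lem:recolouringToUniqueBadNeighbour} first and then repair the bad outcome by pushing unfrozenness along $P$—matches the paper's. However, the execution diverges in two places, and the second is where your plan has a genuine gap.

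First, your Step~1 (unfreezing $w$ before applying \Cref{lem:recolouringToUniqueBadNeighbour}) is unnecessary: that lemma does not require $w$ to be unfrozen. The paper applies it directly to $\alpha$.

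Second, and more importantly, your Case~B dance is not fully justified. The difficulty you flag—that $v_{d-1}$ might be forced to take colour $\beta(w)$—is real, but your fix does not work: there is no reason $\gamma_2(w)$ should lie in $L(v_{d-1})$, and the claim that ``Step~2 in this rigid sub-case uses strictly fewer than $\deg(w)+1$ recolourings'' is unsupported (the length of Step~2 depends only on $N(w)$, not on the local structure at $v_{d-1}$). There is also a case you do not address: $x$ may be adjacent to $v_{d-2}$ (shortest-path considerations allow this), so your push in Step~3a can alter $N(x)$ and invalidate the ``frozen analysis at $x$'' underlying dance step~3.

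The paper handles both problems cleanly. It splits into three sub-cases according to how $x$ meets $P$: $x$ on $P$ (including, via substituting $x$ for $v_{d-1}$, the case $x$ adjacent to $v_{d-2}$); $x$ adjacent to $w'=v_{d-1}$; and $x$ with no $P$-neighbours except $w$. In the last sub-case recolourings along $P$ cannot touch $N(x)\setminus\{w\}$, so changing $w$'s colour genuinely frees $\gamma_2(w)$ at $x$. For the ``$\beta(w)$ on $v_{d-1}$'' issue, the paper does not try to avoid it by a clever colour choice; instead it allows $\beta(w)$ to land on $w'$, then does a \emph{second} push from $v$ along $P$ to unfreeze $w'$, recolours $w'$ (which currently holds $\beta(w)$, hence necessarily moves off it), and finally recolours $w$ with $\beta(w)$. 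This repair costs at most about $|V(P)|$ extra steps, which is exactly what the bound $\deg(w)+2+2|V(P)|$ accommodates.
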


\begin{proof}
    We first apply \Cref{lem:recolouringToUniqueBadNeighbour}, transforming $\a$ to an $L$-colouring $\c$.
    If $\c(w)=\b(w)$, then we are done.
    Otherwise, we assume that $w$ has a unique bad neighbour $x$, and that $w$ and $x$ are both frozen.
    Our general plan is to recolour along $P$ to unfreeze $w$ and then recolour $w$ and $x$ as desired; but the details are a bit more complicated. 
    Let $w'$ be the neighbour of $w$ on $P$.

    First suppose that $x$ lies on $P$; that is, $x$ is the penultimate vertex of $P$.
    We recolour from~$w$ along $P$ to unfreeze $x$, recolour $x$, and recolour $w$ with $\b(w)$.
    So assume $x$ is not on $P$ (and cannot be substituted for the penultimate vertex of $P$ to get another shortest $v,w$-path).
    Instead suppose that $x$ is adjacent to $w'$.
    Again, we recolour from $w$ along $P$ to unfreeze $x$, by recolouring $w'$.
    Afterwards, we recolour $x$ and recolour $w$ with $\b(w)$.

    Finally, assume that $x$ has no neighbours on $P$, except for $w$.
    We recolour from $v$ along $P$ to unfreeze $w$, recolour $w$, and recolour $x$.
    If $\b(w)$ is now available for $w$, then we use it there and are done.
    But possibly $\b(w)$ is now used on $w'$, the neighbour of $w$ on $P$; so assume this is the case.
    If~$w'$ is unfrozen, then recolour $w'$ and afterward recolour $w$ with $\b(w)$. 
    Otherwise, first recolour from $v$ along $P$ to unfreeze $w'$; thereafter, we finish as in the previous sentence.
    It is straightforward to check that (iv) holds.
\end{proof}

\begin{cor}
    The Key Lemma, precise version, holds.
\end{cor}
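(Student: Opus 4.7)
The plan is to prove the Key Lemma by induction on $n$, peeling off one leaf of a BFS spanning tree at a time and using \Cref{fix-leaf-lem} to align that leaf's colour with its target in $\beta$.

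\textbf{Setup.} Fix $v\in V(G)$ with $|L(v)|\ge\deg(v)+2$. Then $v$ is unfrozen under every $L$-colouring, so every $L$-colouring is unfrozen; in particular, the hypotheses of \Cref{fix-leaf-lem} are met by any pair of $L$-colourings of $G$. The base case $n=1$ is trivial, so assume $n\ge 2$.

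\textbf{Inductive step.} Given two $L$-colourings $\alpha,\beta$, take a BFS spanning tree $T$ of $G$ rooted at $v$; since $T$ has at least two leaves, choose a leaf $w\ne v$. Let $P$ be a shortest $v,w$-path in $G$. Apply \Cref{fix-leaf-lem} to produce a recolouring sequence $\mathcal{S}_1$ of length at most $\deg(w)+2+2|V(P)|\le (n-1)+2+2n=3n+1$ that transforms $\alpha$ into some $L$-colouring $\alpha'$ with $\alpha'(w)=\beta(w)$ and $v$ still unfrozen under $\alpha'$. Set $G':=G-w$, which is connected because $w$ is a leaf of the spanning tree $T$. Define a list-assignment $L'$ on $G'$ by $L'(u):=L(u)\setminus\{\beta(w)\}$ for $u\in N_G(w)$ and $L'(u):=L(u)$ otherwise. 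A routine check shows that $L'$ remains a $+1$-list-assignment on $G'$ (each neighbour of $w$ loses at most one colour but drops exactly one in degree) and that $v$ retains its $+2$-list under $L'$ (same accounting). Both $\alpha'|_{G'}$ and $\beta|_{G'}$ are valid $L'$-colourings of $G'$, because neither uses $\beta(w)$ on $N_{G'}(w)$. The inductive hypothesis gives a recolouring sequence $\mathcal{S}_2$ in $\Co{G'}{L'}$ from $\alpha'|_{G'}$ to $\beta|_{G'}$ of length at most $\tfrac12(3(n-1)^2+5(n-1))$. Lifting $\mathcal{S}_2$ back to $G$ (leaving $w$ coloured $\beta(w)$ throughout) and concatenating with $\mathcal{S}_1$ yields a recolouring sequence from $\alpha$ to $\beta$ in $\CLG$. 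A direct computation shows that the total length is at most $(3n+1)+\tfrac12(3(n-1)^2+5(n-1))=\tfrac12(3n^2+5n)$, matching the claimed bound exactly.

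\textbf{Main obstacle.} The delicate point is preserving both invariants simultaneously when passing from $G$ to $G'$: we need $L'$ to remain a $+1$-list-assignment everywhere in $G'$, and the $+2$-list vertex must survive the reduction so the induction continues to apply. The first holds because a neighbour of $w$ that loses the colour $\beta(w)$ from its list also loses $w$ from its neighbourhood; the second is precisely why we root the BFS tree at $v$ and insist that the chosen leaf $w$ be distinct from $v$ (so $v\in V(G')$). Beyond this, the proof is bookkeeping, and the per-step cost $\deg(w)+2+2|V(P)|$ telescopes cleanly into the target $\tfrac12(3n^2+5n)$ essentially at equality.
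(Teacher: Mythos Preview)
Your proof is correct and follows essentially the same approach as the paper: root a BFS spanning tree at the $+2$-list vertex $v$, peel off a leaf $w\ne v$ using \Cref{fix-leaf-lem}, pass to $G-w$ with the reduced list-assignment, and induct. The only cosmetic difference is in the diameter bookkeeping: the paper unrolls the induction and bounds the total cost as a single sum $\sum_i(\deg_{G_i}(w_i)+2+2|V(P_i)|)\le |E(G)|+2n+n(n+1)\le\tfrac12(3n^2+5n)$, whereas you carry the quantitative bound through the induction via the per-step estimate $3n+1$ and verify the recursion $(3n+1)+\tfrac12(3(n-1)^2+5(n-1))=\tfrac12(3n^2+5n)$; both arrive at the same bound.
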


\begin{proof}
    Fix a connected graph $G$, a \pla{1} $L$ such that $|L(v)|\ge \deg(v)+2$ for some~$v$, and unfrozen $L$-colourings $\a,\b$.
    To recolour $\a$ to $\b$, we use induction on $|V(G)|$.
    Let $T$ be a spanning tree rooted at $v$, with a leaf $w$.
    By \Cref{fix-leaf-lem}, we can recolour $\a$ to an $L$-colouring $\c$ with $\c(w)=\b(w)$.
    Let $G':=G-w$, $L'(x):=L(x)\setminus\{\b(w)\}$ for all $x\in N(w)$, and $L'(x):=L(x)$ otherwise.
    Let $\a'$ and $\b'$ denote the restrictions to $G'$ of $\a$ and $\b$.
    By hypothesis, we can recolour~$\a'$ to~$\b'$.
    Along with the steps above, this recolours $\a$ to $\b$.

    Now, in addition, let $T$ be a breadth first tree rooted at $v$.
    Let $w_1,\ldots,w_n$ denote the vertices of $G$ in order of non-increasing distance from $v$, with $w_n=v$.
    Let $G_i$ be formed from $G$ by deleting $w_1,\ldots,w_{i-1}$, and let $\mathcal{S}_i$ denote the sequence in $G_i$ to recolour $w_i$ with $\b(w_i)$.
    Let $\mathcal{S}:=\mathcal{S}_1\cdots\mathcal{S}_n$, a concatenation of sequences.
    Now $|\mathcal{S}|=\sum_{i=1}^n|\mathcal{S}_i|\le \sum_{i=1}^n(\deg_{G_i}(w_i)+2+2|V(P_i)|) \le |E(G)|+2n +2\sum_{i=1}^ni = |E(G)|+2n+n(n+1)\le \tfrac12(3n^2+5n)$.
\end{proof}

Using a more refined version of the arguments in this section, we can prove the following quantitatively stronger version of the Key Lemma (when the minimum degree is at least $3$).

\begin{lem}
\label{lem:degreeplus_linearbound}
    There exists a constant $C$ such that the following holds.
    Let $G$ be a connected graph with minimum degree at least $3$ and with average degree~$\overline{d}$.
    If $L$ is a \pla{1} for~$G$ such that $|L(x)|\ge \deg(x)+2$ for some $x\in V(G)$, then $\diam \CLG\le C\overline{d}|V(G)|$.
\end{lem}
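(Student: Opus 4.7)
The plan is to sharpen the accounting in the proof of the Key Lemma. There the cost of processing leaf $w_i$ was bounded by $\deg_{G_i}(w_i)+2+2|V(P_i)|$, and it was the path-length term $\sum|V(P_i)|$ that blew up to $\Theta(n^2)$. For graphs with minimum degree at least $3$ we want the analogous sum to be only $O(n)$, so that the overall bound becomes $|E(G)|+O(n)=O(\overline{d}\,|V(G)|)$.

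First, I would replace the BFS spanning tree used in the Key Lemma proof by a DFS spanning tree $T$ of $G$ rooted at the distinguished vertex $v$ with $|L(v)|\ge\deg(v)+2$, and process the vertices in DFS post-order $w_1,\dots,w_n=v$. A crucial feature of this order is that the walk $w_1\to w_2\to\cdots\to w_n$ along $T$ crosses each tree edge at most twice, so the total tree-distance travelled is at most $2(n-1)$.

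Second, I would refine \Cref{lem:unfreezeOneVertex_listsize_deg+2} and \Cref{fix-leaf-lem} so that the ``source of unfrozenness'' used to recolour $w_i$ is not always $v$, but an unfrozen vertex $s_i$ kept near $w_i$. The natural candidate for $s_i$ is the lowest common ancestor in $T$ of $w_{i-1}$ and $w_i$: it lies on both the just-completed path and on the path to $w_i$, and under the hypothesis $\min\deg(G)\ge 3$ the successive removal of $w_1,\dots,w_{i-1}$ has accumulated enough list-surplus among the ancestors of $w_i$ to keep $s_i$ unfrozen in $G_i$. Pushing unfrozenness from $s_i$ to $w_i$ then costs at most $\dist_T(s_i,w_i)$ recolouring steps, and tallying gives $\sum \deg_{G_i}(w_i)+\sum 2+\sum 2\,\dist_T(s_i,w_i)\le |E(G)|+2n+4(n-1)=O(\overline{d}\,|V(G)|)$.

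The main obstacle will be making the rolling-source invariant rigorous, namely that $s_i$ is unfrozen in $G_i$ at the moment we need it, and that an appropriate analogue of \Cref{fix-leaf-lem} can be carried out starting from $s_i$ rather than from $v$. The hypothesis $\min\deg(G)\ge 3$ enters precisely here: each ancestor on the DFS path accumulates list-surplus from its removed descendants, and a counting argument (comparing $|L(s_i)|$ against the surviving degree and the colours contributed by $w_1,\dots,w_{i-1}$) ensures that at least one colour of $L(s_i)$ remains available on $N_{G_i}[s_i]$. Boundary cases—when $s_i=v$ itself, when the ancestor has its colour blocked by a freshly fixed descendant, or when $\b(w_i)$ temporarily appears on a single neighbour of $w_i$ in the spirit of \Cref{lem:recolouringToUniqueBadNeighbour}—are handled in the same flavour as in the proof of \Cref{fix-leaf-lem}, contributing only an $O(1)$ overhead per step and hence not affecting the linear bound.
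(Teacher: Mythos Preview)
There is a genuine gap at the heart of your rolling-source argument. Your claim that ``the successive removal of $w_1,\dots,w_{i-1}$ has accumulated enough list-surplus among the ancestors of $w_i$ to keep $s_i$ unfrozen in $G_i$'' does not hold. When a processed vertex $w_j$ is deleted with its colour fixed to $\b(w_j)$, a neighbour $u$ loses one from its degree \emph{and} has $\b(w_j)$ removed from its list; the surplus $|L(u)|-\deg(u)$ therefore stays exactly the same (it increases only in the uncontrollable event $\b(w_j)\notin L(u)$). So $s_i$ still has only a \pl{1} in $G_i$ in general, and a \pl{1} vertex need not be unfrozen in the current colouring. Your later ``counting argument'' conflates list surplus with unfrozenness; these are different. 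Moreover, \Cref{fix-leaf-lem} as stated needs the source to have a \pl{2} precisely so that it remains unfrozen afterwards---if you start from a merely unfrozen \pl{1} vertex $s_i$, you have no guarantee that $s_i$ (or $s_{i+1}$) is unfrozen for the next round, and the DFS-walk telescoping collapses. You also never say concretely how the hypothesis $\delta(G)\ge 3$ is used; in your sketch it plays no identifiable role.

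The paper's argument confronts exactly this difficulty and resolves it differently. It first proves a technical variant of \Cref{fix-leaf-lem} in which the source may be \emph{any} unfrozen vertex of degree at least~$3$, provided the target is at distance at least~$5$; the conclusion then additionally guarantees that the source remains unfrozen afterwards (this is where $\delta(G)\ge3$ is genuinely used, via the extra neighbours of the source). It then seeds a maximal set $X$ of pairwise-distance-$\ge 10$ vertices (including the special \pl{2} vertex), unfreezes all of $X$ simultaneously, and processes the remaining vertices one at a time from a nearby member of $X$, each application keeping $X$ unfrozen. The total cost is then $\sum_w(O(1)+\deg(w))=O(\overline{d}\,n)$. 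So the missing idea in your proposal is a mechanism---analogous to this technical variant and the distance net---that \emph{provably} maintains an unfrozen source within bounded distance of the next target; the DFS/LCA bookkeeping alone does not supply it.
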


Here we sketch the proof of~\cref{lem:degreeplus_linearbound}, which goes in three stages.
The first stage is a technical variant of~\cref{fix-leaf-lem} that allows $v$ to be \emph{any} unfrozen vertex of degree at least $3$, but in return requires $w$ to be at distance $5$ or more from $v$.
Starting from a colouring with $v$ unfrozen, this stage produces a colouring in which $v$ is still unfrozen and $w$ gets colour $\b(w)$.
It requires at most $2|P|+ \deg(w)+20$ recolourings of vertices in  $P \cup N(w)\cup N(v)$, where $P$ is a shortest $v,w$-path.

In the second stage we simultaneously unfreeze all vertices of a maximal set of vertices $X$ that are pairwise at distance at least $10$.
We also require that $X$ includes the special vertex $x$ with $|L(x)|\geq \deg(x)+2$.
Note that every vertex must be at distance no more than $10$ from $X$.

In the third stage we iteratively apply the technical variant to a vertex $v \in X$ and some other vertex $w$ at some distance $d$ from $v$, where $d$ is small (say $d<30$) but not too small ($d\ge 5$).
The technical variant ensures that $w$ is recoloured to its target colour while keeping $X\setminus\{w\}$ unfrozen;
it requires at most $2 \cdot 30 + \deg(w) + 20$ recolouring steps.
We can then finish by induction on $G-w$.
Note however that we must carefully choose the order in which the vertices $w$ are removed; otherwise,  removing some $w$ from $G$ could disconnect the graph or create a vertex whose distance to $X$ is too large.
At the end, only vertices at small distance (less than $5$) from the special vertex $x$ remain.
So we can finish by iteratively applying~\cref{fix-leaf-lem} itself, recolouring vertices working inward from the leaves of a breadth first tree rooted at $u$.
This argument in fact yields $\diam \CLG\le (c+ \tfrac12\overline{d}) |V(G)|$, for some constant $c>0$.

We have omitted the details of this proof, since it is technical and we have not proved that even $\diam \CLG=o(|V(G)|^2)$.
Nonetheless, it is natural to ask whether one can prove a still stronger bound in the Key Lemma: $\diam\CLG\le C|V(G)|$ when the minimum degree is at least~$3$, removing the dependence on $\overline{d}$.

\section{Regular Graphs with Identical Lists}
\label{regular-sec}

In this section, we prove the Main Theorem in the special case that $G$ is a $2$-connected regular graph and all lists are identical. 
This implies the main result of Feghali et al.~\cite{FJP-brooks} in the case when~$G$ is $2$-connected.
It is also a crucial step in our proof of the Main Theorem.

\begin{remark}
Henceforth, for brevity whenever a graph $G$ has a \pla{1} $L$ we will assume that $|L(v)|=\deg(v)+1$ for all $v\in V(G)$.
Otherwise, we are done by the Key Lemma.
\end{remark}

\begin{defi}
For a graph $G$, a list-assignment $L$ for $V(G)$, and $L$-colourings $\a$ and $\b$, we write $\a\sim_L\b$ to denote that we can recolour $\a$ to $\b$ by a sequence of single vertex recolourings, at each step maintaining an $L$-colouring.
When $L$ is clear from context, we will often simply write $\a\sim\b$.
\end{defi}

\begin{obs}
\label{move-unfrozen-obs}
    Fix a connected graph $G$ and a \pla{1} $L$.
    To show that $\a\sim \b$ for
    given unfrozen $L$-colourings $\a$ and $\b$, it suffices to specify arbitrary vertices $v,w$ and handle the case that $v$ is unfrozen in
    $\a$ and $w$ is unfrozen in $\b$.
\end{obs}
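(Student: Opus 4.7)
The plan is to reduce any instance of $\a \sim_L \b$ with $\a, \b$ unfrozen $L$-colourings to the promised restricted case by transporting unfrozenness along shortest paths. Given arbitrary unfrozen $L$-colourings $\a, \b$ and the two specified vertices $v, w \in V(G)$, I first note that because $\a$ is unfrozen, some vertex $v_0$ is unfrozen under $\a$; since $G$ is connected, I would apply \Cref{lem:unfreezeOneVertex_listsize_deg+2} along a shortest $v_0, v$-path to obtain a recolouring $\a \sim_L \a'$ such that $v$ is unfrozen under $\a'$. Symmetrically, starting from some vertex $w_0$ unfrozen under $\b$, I would obtain an $L$-colouring $\b' \sim_L \b$ in which $w$ is unfrozen under $\b'$.

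Having produced $\a'$ and $\b'$, I would invoke the restricted case --- in which $v$ is unfrozen in the first colouring and $w$ in the second --- to conclude $\a' \sim_L \b'$. Chaining yields $\a \sim_L \a' \sim_L \b' \sim_L \b$, where the last step follows by reversing the recolouring sequence from $\b$ to $\b'$ (so that $\sim_L$ is symmetric); by transitivity, $\a \sim_L \b$, as required. There is really no main obstacle here: the observation is essentially a direct consequence of \Cref{lem:unfreezeOneVertex_listsize_deg+2} together with the symmetry and transitivity of the $\sim_L$ relation.
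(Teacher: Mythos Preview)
Your proposal is correct and essentially mirrors the paper's own proof: both pick an unfrozen vertex, push unfrozenness along a shortest path (the paper does this implicitly, you cite \Cref{lem:unfreezeOneVertex_listsize_deg+2}) to obtain $\a'$ and $\b'$ with $v$ and $w$ unfrozen respectively, then chain $\a\sim\a'\sim\b'\sim\b$. The only cosmetic difference is that the paper picks the unfrozen vertex nearest to $v$, but this is inessential.
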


\begin{proof}
    Since $\a$ is unfrozen, it has some vertex $z$ that is unfrozen; pick such a $z$ that is nearest to~$v$.
    Now recolour along a shortest path from $z$ to $v$ to unfreeze $v$; call the resulting $L$-colouring $\a'$.
    Clearly $\a\sim \a'$.
    Similarly, there exists an $L$-colouring $\b'$ with $\b'\sim \b$ and in which $w$ is unfrozen.
    If we can show that $\a'\sim\b'$, then we have $\a\sim \a' \sim \b'\sim\b$, as desired.
\end{proof}

When aiming to prove that $\a\sim\b$, a helpful idea is to recolour $\a$ to use a common colour $c$ on two neighbours $w_1,w_2$ of some vertex $v$, and let $G':=G-\{w_1,w_2\}$.
If $G'$ is connected, then we can recolour it arbitrarily (except for using $c$ on $N(w_1)\cup N(w_2)$), by applying the Key Lemma to~$v$ in~$G'$. 
This approach motivates our next set of definitions.

\begin{defi}
\label{good-pair-defn}
    Let $G$ be a connected graph, $\a$ a (list) colouring of $G$, and $v\in V(G)$.
    A \emph{good pair} for $v$ w.r.t.~$\a$ is a pair of vertices $w_1,w_2\in N(v)$ such that $\a(w_1)=\a(w_2)$; in particular, $w_1w_2\notin E(G)$.
    A \emph{very good pair} is a good pair such that $G-\{w_1,w_2\}$  is connected. 
\end{defi}

If $\a$ is a $(\Delta+1)$-colouring, $\deg(v)=\Delta$, and $v$ is unfrozen in $\a$, then the $\Delta$ neighbours of $v$ are coloured with at most $\Delta-1$ colours.
Thus, by the pigeonhole principle, some pair of neighbours of $v$ use the same colour, so are a good pair for $v$ w.r.t.~$\a$.
In particular, if $G$ is $\Delta$-regular, then every unfrozen vertex has a good pair.
If $G$ is $3$-connected, then every good pair is in fact a very good pair.

A partial (proper) $L$-colouring assigns colours to some vertices $v$ (from $L(v)$), but may leave some vertices uncoloured.
To \emph{extend} a partial $L$-colouring $\c_0$ to an $L$-colouring $\c$ means to construct a proper $L$-colouring $\c$ such that $\c(v)=\c_0(v)$ for all vertices $v$ where $\c_0(v)$ is defined.
If $|L(v)|\ge \deg(v)+1$ for every $v\in V(G)$, then every partial $L$-colouring $\c_0$ can be extended to an $L$-colouring~$\c$.
In particular, this is true when $L(v)=\{1,\ldots,\Delta+1\}$ for all $v$.
We call this \emph{extending} $\c_0$ to $\c$.

\begin{lem}
\label{twomatch-lem}
    Let $G$ be a connected graph with a \pla{1} $L$.
    Let $\a,\b$ be $L$-colourings of~$G$.
    If there exist $v\in V(G)$ and $w_1,w_2\in N(v)$ such that $\a(w_1)=\b(w_1)$ and $w_1,w_2$ are a very good pair for $v$ w.r.t.\ both $\a$ and $\b$, then $\a \sim \b$.
    Furthermore, there exists a recolouring sequence from $\a$ to~$\b$ that recolours neither $w_1$ nor $w_2$.
\end{lem}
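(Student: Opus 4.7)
The plan is to exploit the fact that $\a$ and $\b$ already agree on $w_1$ and $w_2$ (with the same common colour) and reduce the problem, via \Cref{lem:restrictedontosubgraph}, to a single application of the Key Lemma on the smaller graph $G':=G-\{w_1,w_2\}$. First I would observe that because $w_1,w_2$ is a good pair for $v$ under both colourings and $\a(w_1)=\b(w_1)$, we have
\[
c \;:=\; \a(w_1)=\a(w_2)=\b(w_1)=\b(w_2).
\]
By the definition of \emph{very} good pair, $G'$ is connected.

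Next I would set up the appropriate list-assignment on $G'$: define $L'(u):=L(u)\setminus\{c\}$ for every $u\in N_G(w_1)\cup N_G(w_2)$, and $L'(u):=L(u)$ for every other vertex of $G'$. For a vertex $u\in V(G')$ adjacent to exactly one (resp.\ both) of $w_1,w_2$, deleting these neighbours reduces the degree by $1$ (resp.\ $2$) while the list shrinks by at most $1$; for $u$ adjacent to neither, nothing changes. A quick case check therefore gives $|L'(u)|\ge \deg_{G'}(u)+1$ for every $u\in V(G')$, so $L'$ is a $+1$-list-assignment of $G'$. Moreover $v$ is adjacent to both $w_1$ and $w_2$, so $|L'(v)|\ge |L(v)|-1 \ge \deg_G(v)=\deg_{G'}(v)+2$; thus $v$ has a $+2$-list with respect to $L'$. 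The restrictions $\a':=\a|_{V(G')}$ and $\b':=\b|_{V(G')}$ are proper $L'$-colourings: they are proper on $G'\subseteq G$, and any neighbour of $w_1$ or $w_2$ necessarily avoids the colour $c$, so its colour lies in $L'(\cdot)$.

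Now I invoke the Key Lemma (precise version) on the connected graph $G'$ with the list-assignment $L'$: since $L'$ is a $+1$-list-assignment in which $v$ has a $+2$-list, $\C(G',L')$ is connected. Hence there is a recolouring sequence in $\C(G',L')$ transforming $\a'$ into $\b'$. Lifting this sequence to $G$ (keeping $w_1$ and $w_2$ coloured $c$ throughout) yields a valid recolouring sequence from $\a$ to $\b$ in $\C(G,L)$ that never touches $w_1$ or $w_2$, which proves both assertions.

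The work here is genuinely routine once the setup is right; the only place to be careful is the case analysis for the list sizes in $G'$ and the verification that $v$ inherits the $+2$-list needed to trigger the Key Lemma. I do not anticipate a real obstacle beyond that bookkeeping.
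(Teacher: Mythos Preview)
Your proposal is correct and follows essentially the same approach as the paper's own proof: delete $w_1,w_2$, remove their common colour from the lists of their neighbours, and apply the Key Lemma to $G'=G-\{w_1,w_2\}$ using that $v$ now has a \pl{2}. Your write-up simply makes explicit the bookkeeping (that $\a(w_1)=\a(w_2)=\b(w_1)=\b(w_2)$ and the list-size verification) that the paper leaves implicit.
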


\begin{proof}
    Let $G':=G-\{w_1,w_2\}$, let $L'(x):=L(x)\setminus \{\a(w_1)\}$ for all $x\in N(w_1)\cup N(w_2)$, and otherwise let $L'(x):=L(x)$.
    We are done by applying the Key Lemma to $G'$. %\cref{thm:lineardiameter_sparsegraphs_onelistdegvplus2} to $G'$.
\end{proof}

\begin{lem}
\label{3connected-reg-lem}
    Let $G$ be a $3$-connected $\Delta$-regular graph.
    If $\a$ and $\b$ are unfrozen $(\Delta+1)$-colourings of~$G$, then $\a\sim\b$.
\end{lem}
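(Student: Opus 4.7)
The plan is to build an intermediate $(\Delta+1)$-colouring $\gamma$ reachable from both $\a$ and $\b$, by combining two applications of the Key Lemma with \Cref{twomatch-lem}. First, using \Cref{move-unfrozen-obs}, I pick any vertex $v$ and reduce to the case that $v$ is unfrozen under both $\a$ and $\b$. Since $G$ is $\Delta$-regular and the $\Delta$ neighbours of $v$ use at most $\Delta-1$ distinct colours in each colouring (because $v$ is unfrozen), pigeonhole provides a good pair $w_1,w_2 \in N(v)$ with $\a(w_1)=\a(w_2)=:c$ and a good pair $w_1',w_2'\in N(v)$ with $\b(w_1')=\b(w_2')=:c'$. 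Since $G$ is $3$-connected, $G-\{w_1,w_2\}$ and $G-\{w_1',w_2'\}$ are both connected, so both pairs are actually very good.

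Next, I would seek a $(\Delta+1)$-colouring $\gamma$ of $G$ with $\gamma(w_1)=\gamma(w_2)=c$ and $\gamma(w_1')=\gamma(w_2')=c'$. When the partial colouring on $\{w_1,w_2,w_1',w_2'\}$ is proper, it extends greedily to a full $(\Delta+1)$-colouring $\gamma$, since $|L(u)|\ge\deg(u)+1$ at every vertex $u$. The Key Lemma then applies in $(G-\{w_1,w_2\},L')$, where $L'$ is obtained from $L$ by removing colour $c$ from the list of every vertex in $N(w_1)\cup N(w_2)$: the subgraph is connected, and in $L'$ the vertex $v$ has list size $\Delta$ and degree $\Delta-2$, hence a \pl{2}. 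So $\a|_{G-\{w_1,w_2\}}$ and $\gamma|_{G-\{w_1,w_2\}}$ --- both valid $L'$-colourings --- lie in the same component of $\Co{G-\{w_1,w_2\}}{L'}$; lifting the recolouring sequence to $G$ while keeping $w_1,w_2$ coloured $c$ throughout shows $\a\sim\gamma$. Applying the Key Lemma analogously within $G-\{w_1',w_2'\}$ gives $\gamma\sim\b$, and we conclude $\a\sim\b$.

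The main obstacle is that the partial colouring above may fail to be proper; this happens either when $c\ne c'$ and $\{w_1,w_2\}\cap\{w_1',w_2'\}\ne\emptyset$, or when $c=c'$ and $\{w_1,w_2,w_1',w_2'\}$ is not an independent set. When $\{w_1,w_2\}=\{w_1',w_2'\}$ and $c=c'$, we are immediately done by \Cref{twomatch-lem}. For the remaining degenerate configurations, the plan is a preparatory step: apply the Key Lemma within $G-\{w_1,w_2\}$ to modify $\a$ to a colouring $\tilde\a$ that exhibits an \emph{additional} very good pair at $v$, for instance by assigning colour $c$ to a further neighbour $w_3 \in N(v)$ chosen non-adjacent to the vertices in $\{w_1,w_2,w_1',w_2'\}$. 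This yields a new very good pair of $\tilde\a$ at $v$ (such as $\{w_1,w_3\}$ or $\{w_2,w_3\}$) that is compatible with $\{w_1',w_2'\}$, so that the bridge construction succeeds for $\tilde\a$ and $\b$. The delicate part is to show that such a $w_3$ can always be located---or that one can instead re-apply \Cref{move-unfrozen-obs} with a different choice of $v$---using $3$-connectivity and $\Delta$-regularity to rule out persistent structural obstructions in $G[N(v)]$.
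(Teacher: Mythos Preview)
Your overall architecture---find a very good pair for each of $\a$ and $\b$, then bridge via an intermediate colouring using \Cref{twomatch-lem}/the Key Lemma---matches the paper's. However, you centre both good pairs at the \emph{same} vertex $v$, and this is exactly what creates the ``degenerate configurations'' that you leave unresolved. Your proposed rescue (locate a further $w_3\in N(v)$ non-adjacent to $\{w_1,w_2,w_1',w_2'\}$ and recolour via the Key Lemma so that $w_3$ also gets colour $c$) is not justified and can fail outright: when $\Delta=3$, $|N(v)|=3$, so after the pair $\{w_1,w_2\}$ there is only one candidate for $w_3$, and nothing prevents $w_3$ from being adjacent to $w_1$ or $w_2$ (in which case it cannot receive colour $c$ while $w_1,w_2$ keep it). Invoking $3$-connectivity and regularity does not remove this obstruction. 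So as written there is a genuine gap.

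The paper avoids the problem with a small but decisive change of viewpoint: after finding a very good pair $w_1,w_2\in N(v)$ for $\a$, use \Cref{move-unfrozen-obs} to assume that \emph{$w_1$} (not $v$) is unfrozen in $\b$, and take the very good pair $x_1,x_2\in N(w_1)$ for $\b$. Since $x_1,x_2\in N(w_1)$ while $w_1\notin N(w_1)$ and $w_2\notin N(w_1)$ (because $w_1w_2\notin E(G)$), one gets $\{w_1,w_2\}\cap\{x_1,x_2\}=\varnothing$ for free. If $\a(w_1)\ne\b(x_1)$, a single bridge colouring works, exactly as you describe. If $\a(w_1)=\b(x_1)=1$, one cannot use the same colour on all four (there may be edges between $\{w_1,w_2\}$ and $\{x_1,x_2\}$), but one can chain through three intermediates $\c_{1,2}\sim\c_{3,2}\sim\c_{3,1}$, where $\c_{i,j}$ colours $w_1,w_2$ with $i$ and $x_1,x_2$ with $j$; since $i\ne j$ at every step, any edges between the two pairs are harmless, and each consecutive $\sim$ follows from \Cref{twomatch-lem}. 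Replacing your common centre $v$ by the shifted centre $w_1$ eliminates the need for the unproved ``delicate part'' entirely.
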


\begin{proof}
    Let $v$ be an arbitrary unfrozen vertex in $\a$.
    By \Cref{good-pair-defn}, vertex $v$ has a very good pair $w_1,w_2$ for $\a$.
    We may assume that $w_1$ is unfrozen in $\b$.
    Thus, $w_1$ has a very good pair $x_1,x_2$.
    Note that $\{w_1,w_2\}\cap \{x_1,x_2\}=\varnothing$, since $x_1,x_2\in N(w_1)$, but $w_2\notin N(w_1)$.
    First suppose that $\a(w_1)\ne \b(x_1)$.
    Form a proper $(\Delta+1)$-colouring $\c$ by colouring $w_1,w_2$ with $\a(w_1)$, colouring $x_1,x_2$ with~$\b(x_1)$, and extending.
    By applying \Cref{twomatch-lem} twice, we get $\a \sim \c \sim \b$.
    So assume instead, without loss of generality, that $\a(w_1)=\b(x_1)=1$.
    For each ordered pair $(i,j)\in \{(1,2),(3,2),(3,1)\}$, form a proper $(\Delta+1)$-colouring $\c_{i,j}$ by colouring $w_1,w_2$ with $i$, by colouring $x_1,x_2$ with $j$, and extending.
    By applying \Cref{twomatch-lem} four times, we get $\a \sim \c_{1,2} \sim \c_{3,2} \sim \c_{3,1} \sim \b$; see \Cref{lem11fig}.
\end{proof}

\begin{figure}[!ht]
\centering
\begin{tikzpicture}[xscale=.75, scale=.8, yscale=-1]
    \draw (-6.5,0) node[lStyle] {~};
    \def\myeps{.35cm}
    \begin{scope}[xshift=-6.25 cm] % vertex names (for reference)
        \filldraw[gray!15!white,rounded corners=.15in] (0,-1) rectangle (3,2);
        \foreach \i in {1,2} % draw the vertices
        {
            \draw (\i,0) node (v\i0) {};
            \draw (\i,1) node (v\i1) {};
            \draw[dashed] (v\i0) -- (v\i1);
        }
        % name the vertices
        \draw (v10) ++ (0,-\myeps) node[lStyle] {\footnotesize{$w_1$}};
        \draw (v11) ++ (0,\myeps) node[lStyle] {\footnotesize{$w_2$}};
        \draw (v20) ++ (0,-\myeps) node[lStyle] {\footnotesize{$x_1$}};
        \draw (v21) ++ (0,\myeps) node[lStyle] {\footnotesize{$x_2$}};
    \end{scope}

    % reconfiguration sequence
    \def\mypush{4.25}
    \foreach \a/\b/\x in {1/{}/0, 1/2/1, 3/2/2, 3/1/3, {}/1/4}
    {
    \begin{scope}[xshift=\x*\mypush cm]
        \filldraw[gray!15!white,rounded corners=.15in] (0,-1) rectangle (3,2);
        \foreach \i in {1,2} % draw and name the vertices
        {
            \draw (\i,0) node (v\i0) {};
            \draw (\i,1) node (v\i1) {};
            \draw[dashed] (v\i0) -- (v\i1);
        }
        % colour the vertices
        \draw (v10) ++ (0,-1.1*\myeps) node[lStyle]{\footnotesize{\a}};
        \draw (v11) ++ (0,1.1*\myeps) node[lStyle] {\footnotesize{\a}};
        \draw (v20) ++ (0,-1.1*\myeps) node[lStyle] {\footnotesize{\b}};
        \draw (v21) ++ (0,1.1*\myeps) node[lStyle] {\footnotesize{\b}};
    \end{scope}

    \foreach \x in {.85, 1.85, 2.85, 3.85}
    \draw (\x*\mypush cm,.5) node[lStyle] {$\sim$};

    \foreach \x/\lab in {0/{\alpha}, 1/{\c_{1,2}}, 2/{\c_{3,2}}, 3/{\c_{3,1}}, 4/{\beta}}
    \draw (1.5cm+\x*\mypush cm,2.5) node[lStyle] {$\lab$};
    }
\end{tikzpicture}
    \captionsetup{width=.895\textwidth}
  \caption{The harder case in the proof of \Cref{3connected-reg-lem}, and also of Claim~1 in the proof of \Cref{regular-thm}:
  Vertex names (left) and key colourings (right) in the reconfiguration sequence from $\a$ to $\b$.\label{lem11fig}}
\end{figure}
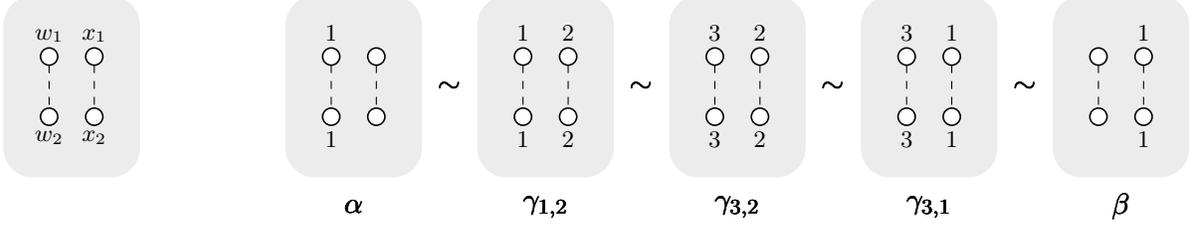

In fact, we can weaken the hypothesis in the previous lemma that $G$ is $3$-connected, to require only that $G$ is $2$-connected.
Our general idea in the proof below is the same as that in the proof above. 
However, now it is no longer the case that every good pair is in fact a very good pair.
So we must expend more effort ensuring that the pairs we pick are very good.

\begin{theorem}
\label{regular-thm}
    Let $G$ be a $2$-connected $\Delta$-regular graph, with $\Delta\ge 3$.
    If $\a$ and $\b$ are unfrozen $(\Delta+1)$-colourings of $G$, then $\a\sim\b$.
\end{theorem}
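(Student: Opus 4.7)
The plan is to adapt the strategy from \Cref{3connected-reg-lem}. In the 3-connected case every good pair was automatically a very good pair; in the 2-connected case I must explicitly construct very good pairs via Lovász's classical lemma (the one he used in his proof of Brooks' Theorem) and then manoeuvre $\a$ and $\b$ so that these pairs actually receive matching colours.

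I start by applying Lovász's lemma: since $G$ is 2-connected with $\Delta\ge 3$ and $G\ne K_{\Delta+1}$ (otherwise no unfrozen $(\Delta+1)$-colouring exists), there exist $v,w_1,w_2$ with $w_1,w_2\in N(v)$, $w_1w_2\notin E(G)$, and $G-\{w_1,w_2\}$ connected. By \Cref{move-unfrozen-obs} I may assume $v$ is unfrozen in $\a$ and $w_1$ is unfrozen in $\b$. A second application of Lovász's lemma (seeking a pair of non-adjacent neighbours of $w_1$ whose removal leaves $G$ connected) produces $x_1,x_2\in N(w_1)$ with $x_1x_2\notin E(G)$ and $G-\{x_1,x_2\}$ connected; since $w_1,w_2\notin N(w_1)$, automatically $\{w_1,w_2\}\cap\{x_1,x_2\}=\varnothing$.

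I will then prove \textbf{Claim~1}: given a Lovász-type triple $(u,y_1,y_2)$ and any colouring $\gamma$ in which $u$ is unfrozen, $\gamma$ can be recoloured to some $\gamma'$ with $\gamma'(y_1)=\gamma'(y_2)$. The approach is to recolour $y_2$ to match $\gamma(y_1)$; the only possible obstruction is a (unique) neighbour $u'\in N(y_2)\setminus\{y_1\}$ with $\gamma(u')=\gamma(y_1)$. If $u'$ is unfrozen, recolour it to any colour other than $\gamma(y_1)$ (such a colour exists because $u'$ unfrozen provides a free colour distinct from $\gamma(u')=\gamma(y_1)$). If $u'$ is frozen, first propagate unfrozenness from $u$ to $u'$ via \Cref{lem:unfreezeOneVertex_listsize_deg+2}, then recolour $u'$ as above, and finally recolour $y_2$ to $\gamma(y_1)$. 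Applying Claim~1 to $(v,w_1,w_2)$ with $\gamma=\a$ and to $(w_1,x_1,x_2)$ with $\gamma=\b$ yields colourings $\a',\b'$ in which $(w_1,w_2)$ and $(x_1,x_2)$ are very good pairs for $v$ and $w_1$, respectively.

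Finally I deduce $\a'\sim\b'$ exactly as in \Cref{3connected-reg-lem}: if $\a'(w_1)\ne\b'(x_1)$, construct a single intermediate colouring $\gamma$ agreeing with $\a'$ on $(w_1,w_2)$ and with $\b'$ on $(x_1,x_2)$ (extending greedily because $|L(u)|=\Delta+1\ge\deg(u)+1$), and invoke \Cref{twomatch-lem} twice; otherwise construct the four-step chain $\gamma_{1,2},\gamma_{3,2},\gamma_{3,1}$ shown in \Cref{lem11fig} and apply \Cref{twomatch-lem} four times. The main obstacle will be Claim~1, specifically the sub-case where the shortest $u,u'$-path used in \Cref{lem:unfreezeOneVertex_listsize_deg+2} passes through $y_1$ and alters the very colour we aim to match; I expect this to be handled by a short case analysis (either rerouting the path using 2-connectivity of $G$, or adapting the target colour on $y_2$ to whatever new colour $y_1$ receives), since every vertex has a spare colour in its list of size $\Delta+1$.
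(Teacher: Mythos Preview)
Your approach has a genuine gap at the very start: Lov\'asz's lemma guarantees the existence of \emph{some} vertex $v$ together with non-adjacent neighbours $w_1,w_2$ whose removal leaves $G$ connected; it does \emph{not} assert that every vertex enjoys this property. Your ``second application of Lov\'asz's lemma'' to the specific vertex $w_1$ is therefore unjustified, and in fact it can fail. For a concrete witness, take two copies of $K_4-e$ on vertex sets $\{a,b,c,d\}$ and $\{a',b',c',d'\}$ (with $cd$ and $c'd'$ the missing edges) and add the edges $cc'$ and $dd'$. This graph is $3$-regular and $2$-connected, and $(c,a,c')$ is a Lov\'asz triple. But the only non-adjacent pair in $N(a)=\{b,c,d\}$ is $\{c,d\}$, and $G-\{c,d\}$ is disconnected; so there is no Lov\'asz triple centred at $a=w_1$. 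You might hope that at least one of $w_1,w_2$ always admits such a triple (in this example $c'$ does), but that is an additional structural claim you neither state nor prove, and it is not obvious in general.

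Your Claim~1 is also under-argued. The obstruction vertex $u'$ need not be unique: several neighbours of $y_2$ may carry the colour $\gamma(y_1)$. And the difficulty you flag yourself---that the unfreezing path from $u$ to $u'$ may pass through $y_1$ (or $y_2$) and destroy the target colour---is not a detail to be waved away; a shortest $u,u'$-path here has length at most~$2$ and may well be $u\,y_2\,u'$ or $u\,y_1\,u'$, so the ``short case analysis'' you anticipate must actually be carried out, and it interacts with the non-uniqueness of $u'$.

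By contrast, the paper never invokes Lov\'asz's lemma. It fixes a $2$-cut $S=\{z_1,z_2\}$ chosen to maximise one component $H_1$ of $G-S$, and uses this extremal choice to prove that for suitable vertices in the other component $H_2$, \emph{every} good pair (which exists automatically by pigeonhole whenever the vertex is unfrozen) is already a very good pair. This sidesteps both of your difficulties: there is no need to force a prescribed pair to receive a common colour, and no need to centre a Lov\'asz triple at a prescribed vertex.
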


\begin{proof}
    If $G$ is $3$-connected, then we are done by the previous lemma.
    So instead assume that~$G$ has a $2$-cut.
    Among all $2$-cuts in $G$, pick a $2$-cut $S=\{z_1,z_2\}$ that leaves a biggest possible component~$H_1$.
    Let~$H_2$ be another component of $G-S$.
    
    \setcounter{claim}{0}
    \begin{claim}
    \label{clm1A}
    Fix $v_1,v_2\in V(G)$ such that $v_1$ is unfrozen in $\a$ and that $v_2$ is unfrozen in $\b$.
    If there exists a very good pair $x_1,x_2$ for $v_1$ w.r.t.~$\a$, there exists a very good pair $y_1,y_2$ for $v_2$ w.r.t.~$\b$, and $\{x_1,x_2\}\cap \{y_1,y_2\}= \varnothing$, then the theorem holds.
    \end{claim}

    \begin{claimproof}
    The proof is the same as that of~\Cref{3connected-reg-lem}.
    First suppose that $\a(x_1)\ne \b(y_1)$.
    Form a proper $(\Delta+1)$-colouring $\c$ by colouring $x_1,x_2$ with $\a(x_1)$, colouring $y_1,y_2$ with $\b(y_1)$, and extending.
    By applying \Cref{twomatch-lem} twice, we have $\a \sim \c \sim \b$.
        
    So assume instead, without loss of generality, that $\a(w_1)=\b(x_1)=1$.
    For each ordered pair $(i,j)\in \{(1,2),(3,2),(3,1)\}$, form a proper $(\Delta+1)$-colouring $\c_{i,j}$ by colouring $x_1,x_2$ with $i$, by colouring $y_1,y_2$ with $j$, and extending.
    By applying \Cref{twomatch-lem} four times, we have $\a \sim \c_{1,2} \sim \c_{3,2} \sim \c_{3,1} \sim \b$; again, see \Cref{lem11fig}.
    \end{claimproof}

    \begin{claim}
    \label{clm2A}
    There exists a vertex $v\in V(H_2)$ such that (i) $S\not\subseteq N(v)$ and (ii) if $\c$ is a $(\Delta+1)$-colouring of $G$ and $v$ is unfrozen in $\c$, then $v$ has a good pair w.r.t.~$\c$, and every such good pair is a very good pair.
    In fact, this property holds for every $v\in V(H_2)$ such that $S\not\subseteq N(v)$.
    \end{claim}

    \begin{claimproof}
    We prove the second statement first.
    Suppose there exists $v\in V(H_2)$ such that $S\not\subseteq N(v)$; see the left of \Cref{clm2-fig}.
    Now consider a $(\Delta+1)$-colouring $\c$ of $G$ in which $v$ is unfrozen.
    By \Cref{good-pair-defn}, there exist $w_1,w_2\in N(v)$ such that $\c(w_1)=\c(w_2)$.
    If $G-\{w_1,w_2\}$ is not connected, then all vertices of $H_1$ lie in the same component; furthermore, this component also contains at least one vertex of $S$, since $S\not\subseteq N(v)$.
    But this is a larger component than $H_1$, contradicting our choice of $S$ and $H_1$.
    Thus, $G-\{w_1,w_2\}$ is connected, as desired.
    Since $\c$ was arbitrary, we have found the desired vertex $v$; this proves the second statement.

    Now we prove the first statement.
    Assume instead that each $v\in V(H_2)$ has $S\subseteq N(v)$; see the right of \Cref{clm2-fig}.
    For each $z_i\in S$, we have $\deg_{H_2}(z_i)=|V(H_2)|$. 
    And for each $v\in V(H_2)$, we have $\deg_G(v)\le |V(H_2)|-1+|S| = |V(H_2)|+1$. 
    Since $G$ is regular, each $z_i\in S$ has exactly one neighbour outside~$H_2$; in particular $H_1$ and $H_2$ are the only components of $G-S$, since $G$ is $2$-connected and thus each component of $G-S$ must be adjacent to each vertex of $S$.
    Furthermore, $H_2$ is complete and each vertex of $S$ is complete to $H_2$.
    If $\c$ is a $(\Delta+1)$-colouring of $G$ with $z_i$ unfrozen, then $z_i$ has a good pair $y_1,y_2$ w.r.t.~$\c$.
    Since $H_2$ is complete, after possibly renaming, we have $y_1\in V(H_1)$ and $y_2\in V(H_2)$.
    Now $G-y_1$ is connected since $G$ is $2$-connected.
    And $G-\{y_1,y_2\}$ is connected because $N(y_2)$ is a clique minus an edge (with endpoints in $S$), so any path in $G-y_1$ that passes through $y_2$ can be rerouted in $G-\{y_1,y_2\}$ through some neighbour of~$y_2$ in $H_2$.

    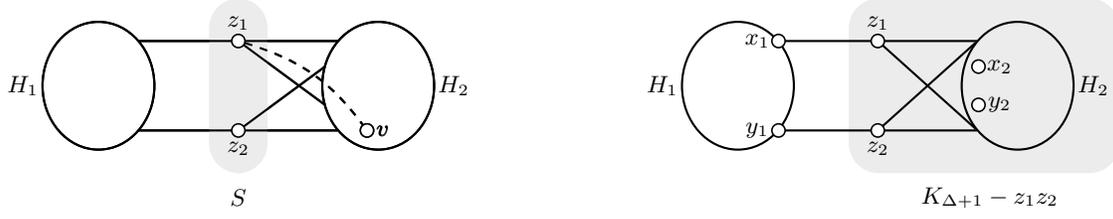
\begin{figure}[!ht]
    \centering
\begin{tikzpicture}[thick, scale=.85, xscale=.875]
\tikzstyle{uStyle}=[shape = circle, minimum size = 5.0pt, inner sep = 0pt,
outer sep = 0pt, draw, fill=white, semithick]
\tikzstyle{sStyle}=[shape = rectangle, minimum size = 4.5pt, inner sep = 0pt,
outer sep = 0pt, draw, fill=white, semithick]
\tikzstyle{lStyle}=[shape = circle, minimum size = 4.5pt, inner sep = 0pt,
outer sep = 0pt, draw=none, fill=none]
\tikzset{every node/.style=uStyle}
\clip (-2,-2.05) rectangle (19,1.5);

    \def\myeps{2.8mm}

    \filldraw[gray!15!white,rounded corners=.15in] (2,-1.35) rectangle (3,1.35);
    \draw (0,0) circle (1cm) (5,0) circle (1cm);
    \draw (2.5,.7) node (z1) {} (2.5,-.7) node (z2) {};
    \draw (z1) --++ (-1.775,0) (z1) --++ (1.775,0);
    \draw (z2) --++ (-1.775,0) (z2) --++ (1.775,0);
    \draw (z1) --++ (1.55,-1) (z2) --++ (1.55,1);
    \draw (z1) ++ (0,\myeps) node[lStyle] {\scriptsize{$z_1$}};
    \draw (z2) ++ (0,-\myeps) node[lStyle] {\scriptsize{$z_2$}};
    \draw (z2) ++ (2.3,0) node (v) {};
    \draw (v) ++ (\myeps,0) node[lStyle] {\scriptsize{$v$}};
    \draw[dashed] (z1) to [out =-15 , in = 130] (v);
    %\draw (-1.35,0) node[lStyle] {\scriptsize{$H_1$}};
    %\draw (6.35,0) node[lStyle] {\scriptsize{$H_2$}};
    %\draw (2.5,-1.75) node[lStyle] {\scriptsize{$S$}};

    \filldraw[gray!15!white,rounded corners=.15in] (2,-1.35) rectangle (3,1.35);
    \draw (0,0) circle (1cm) (5,0) circle (1cm);
    \draw (2.5,.7) node (z1) {} (2.5,-.7) node (z2) {};
    \draw (z1) --++ (-1.775,0) (z1) --++ (1.775,0);
    \draw (z2) --++ (-1.775,0) (z2) --++ (1.775,0);
    \draw (z1) --++ (1.55,-1) (z2) --++ (1.55,1);
    \draw (z1) ++ (0,\myeps) node[lStyle] {\footnotesize{$z_1$}};
    \draw (z2) ++ (0,-\myeps) node[lStyle] {\footnotesize{$z_2$}};
    \draw (z2) ++ (2.3,0) node (v) {};
    \draw (v) ++ (\myeps,0) node[lStyle] {\footnotesize{$v$}};
    \draw[dashed] (z1) to [out =-15 , in = 130] (v);
    \draw (-1.35,0) node[lStyle] {\footnotesize{$H_1$}};
    \draw (6.35,0) node[lStyle] {\footnotesize{$H_2$}};
    \draw (2.5,-1.75) node[lStyle] {\footnotesize{$S$}};

    \begin{scope}[xshift=4.5in]
        \filldraw[gray!15!white,rounded corners=.15in] (2,-1.35) rectangle (6.8,1.35);
        \draw (4.5,-1.75) node[lStyle] {\footnotesize{$K_{\Delta+1}-z_1z_2$}};
        \draw (0,0) circle (1cm) (5,0) circle (1cm);
        \draw (2.5,.7) node (z1) {} (2.5,-.7) node (z2) {};
        \draw (z1) --++ (-1.775,0) node (x1) {} ++ (-1.35*\myeps,0) node[lStyle] {\footnotesize{$x_1$}} (z1) --++ (1.775,0);
        \draw (z2) --++ (-1.775,0) node (y1) {} ++ (-1.35*\myeps,0) node[lStyle] {\footnotesize{$y_1$}} (z2) --++ (1.775,0);
        \draw (z1) --++ (1.75,-1.375) (z2) --++ (1.75,1.375);
        \draw (z1) ++ (0,\myeps) node[lStyle] {\footnotesize{$z_1$}};
        \draw (z2) ++ (0,-\myeps) node[lStyle] {\footnotesize{$z_2$}};
        %\draw (z1) ++ (1.8,-.4) node (x2) {} ++ (3.05*\myeps,0) node[lStyle] {\footnotesize{$x_2=y_2$}};
        \draw (z1) ++ (1.8,-.4) node (x2) {} ++ (1.35*\myeps,0) node[lStyle] {\footnotesize{$x_2$}};
        \draw (z2) ++ (1.8,.4) node (y2) {} ++ (1.35*\myeps,0) node[lStyle] {\footnotesize{$y_2$}};
        \draw (v) ++ (\myeps,0) node[lStyle] {\footnotesize{$v$}};
        \draw (-1.35,0) node[lStyle] {\footnotesize{$H_1$}};
        \draw (6.35,0) node[lStyle] {\footnotesize{$H_2$}};
    %\draw[dashed] (z1) to [out =-15 , in = 130] (v);
    \end{scope}
\end{tikzpicture}
\captionsetup{width=.85\textwidth}
 \caption{Left: The general case, guaranteed by Claim 2, when there exists $v\in V(H_2)$ with $S\not\subseteq N(v)$.
    Right: An exceptional case, when no such $v$ exists, and $G[V(H_1)\cup S]=K_{\Delta+1}-z_1z_2$.\label{clm2-fig}}
\end{figure}

    We assume that $z_1$ is unfrozen in $\a$, and that $z_2$ is unfrozen in $\b$.
    Let $x_1,x_2$ be a good pair for~$z_1$ w.r.t.~$\a$, and let $y_1,y_2$ be a good pair for $z_2$ w.r.t.~$\b$.
    By the previous paragraph, each of these good pairs is in fact very good.
    By possibly swapping indices, we assume that $x_1,y_1\in V(H_1)$ and $x_2,y_2\in V(H_2)$.
    Since $z_1,z_2$ each have only a single neighbour in $V(H_1)$, and $G$ is 2-connected and $\Delta$-regular for $\Delta\geq 3$, we know that $x_1\ne y_1$.
    If $x_2\ne y_2$, then the theorem holds by \Cref{clm1A}, and we are done; so assume instead that $x_2=y_2$.
    By symmetry, we assume that colours 1 and 2 are both available for $z_2$ in $\b$, and that $\b(y_1)=\b(y_2)=3$.
    If $y_2$ is unfrozen, then we recolour it, and reuse colour 3 on another neighbour of $z_2$ in $H_2$.
    So assume instead that $y_2$ is frozen.
    Now in $\b$ recolour $z_2$ to unfreeze $y_2$, then proceed as above.
    Now redefining the good pairs $x_1,x_2$ and $y_1,y_2$ as at the start of the paragraph gives $\{x_1,x_2\}\cap \{y_1,y_2\}=\varnothing$, so the theorem holds by \Cref{clm1A} as explained above.
    This proves the first statement.
    \end{claimproof}

    \begin{claim}
    \label{clm3A}
        There exists $z\in N(z_1)\cap N(z_2)\cap V(H_2)$ and $V(H_2)\subseteq N(z_1)\cup N(z_2)$.
    \end{claim}

    \begin{claimproof}
    By \Cref{clm2A}, there exists $v\in V(H_2)$ such that $S\not\subseteq N(v)$; again, see the left of \Cref{clm2-fig}.
    We assume that $v$ is unfrozen in~$\a$.
    By \Cref{good-pair-defn}, we know $v$ has a good pair $w_1,w_2$ w.r.t.~$\a$.
    By symmetry, we assume that $w_1\notin S$.
    If $S\not\subseteq N(w_1)$, then we assume that $w_1$ is unfrozen in $\b$.
    Similarly, $w_1$ has a good pair $x_1,x_2$ w.r.t.~$\b$.
    Note that $\{w_1,w_2\}\cap \{x_1,x_2\}=\varnothing$, so we are done by \Cref{clm1A}.
    Thus, we assume that $S\subseteq N(w_1)$.
    Let $z:=w_1$.
    This proves the first statement.

    Now we prove the second statement.
    Suppose, to the contrary, that there exists $w\in V(H_2)\setminus (N(z_1)\cup N(z_2))$.
    We assume that $w$ is unfrozen in both $\a$ and $\b$.
    Thus, $w$ has a good pair $x_1,x_2$ w.r.t.~$\a$, and $w$ has a good pair $y_1,y_2$ w.r.t.~$\b$.
    Note that each of these good pairs is very good by~\cref{clm2A}.
    Fix a colour $c\notin\{\a(x_1),\b(y_1)\}$.
    Form a proper $(\Delta+1)$-colouring $\c_1$ by colouring $x_1,x_2$ with $\a(x_1)$, by colouring $z_1,z_2$ with $c$, and by extending.
    Similarly, form a proper $(\Delta+1)$-colouring~$\c_2$ by colouring $y_1,y_2$ with $\b(y_1)$, by colouring $z_1,z_2$ with $c$, and by extending.
    Furthermore, we can choose $\c_2$ to agree with $\c_1$ on all components of $G-\{z_1,z_2\}$ other than $H_2$.
    Now applying \Cref{twomatch-lem} three times, as we explain below, gives $\a\sim \c_1\sim \c_2 \sim \b$; see \Cref{clm3fig}.

    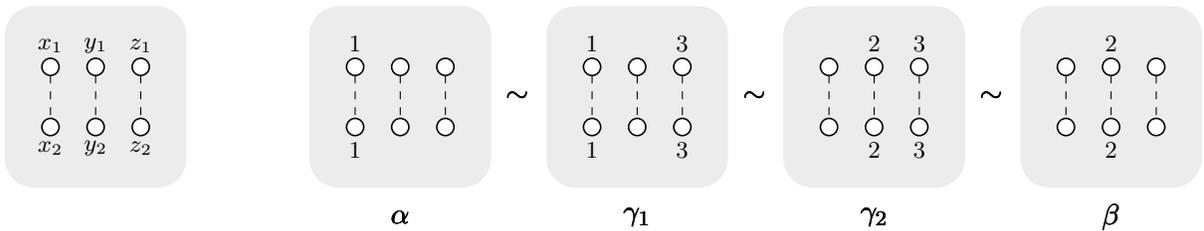
\begin{figure}[!ht]
    \centering
\begin{tikzpicture}[xscale=.75, scale=.8, yscale=-1]
    \draw (-7,0) node[lStyle] {~};
    \def\myeps{.35cm}
    \begin{scope}[xshift=-6.75 cm] % vertex names (for reference)
        \filldraw[gray!15!white,rounded corners=.15in] (0,-1) rectangle (4,2);
        \foreach \i in {1,2,3} % draw the vertices
        {
            \draw (\i,0) node (v\i0) {};
            \draw (\i,1) node (v\i1) {};
            \draw[dashed] (v\i0) -- (v\i1);
        }
        % name the vertices
        \draw (v10) ++ (0,-\myeps) node[lStyle] {\footnotesize{$x_1$}};
        \draw (v11) ++ (0,\myeps) node[lStyle] {\footnotesize{$x_2$}};
        \draw (v20) ++ (0,-\myeps) node[lStyle] {\footnotesize{$y_1$}};
        \draw (v21) ++ (0,\myeps) node[lStyle] {\footnotesize{$y_2$}};
        \draw (v30) ++ (0,-\myeps) node[lStyle] {\footnotesize{$z_1$}};
        \draw (v31) ++ (0,\myeps) node[lStyle] {\footnotesize{$z_2$}};
    \end{scope}

    % reconfiguration sequence
    \def\mypush{5.25}
    \foreach \a/\b/\c/\x in {1/{}/{}/0, 1/{}/3/1, {}/2/3/2, {}/2/{}/3}
    {
    \begin{scope}[xshift=\x*\mypush cm]
        \filldraw[gray!15!white,rounded corners=.15in] (0,-1) rectangle (4,2);
        \foreach \i in {1,2,3} % draw and name the vertices
        {
            \draw (\i,0) node (v\i0) {};
            \draw (\i,1) node (v\i1) {};
            \draw[dashed] (v\i0) -- (v\i1);
        }
        % colour the vertices
        \draw (v10) ++ (0,-1.1*\myeps) node[lStyle]{\footnotesize{\a}};
        \draw (v11) ++ (0, 1.1*\myeps) node[lStyle] {\footnotesize{\a}};
        \draw (v20) ++ (0,-1.1*\myeps) node[lStyle] {\footnotesize{\b}};
        \draw (v21) ++ (0, 1.1*\myeps) node[lStyle] {\footnotesize{\b}};
        \draw (v30) ++ (0,-1.1*\myeps) node[lStyle] {\footnotesize{\c}};
        \draw (v31) ++ (0, 1.1*\myeps) node[lStyle] {\footnotesize{\c}};
    \end{scope}

    \foreach \x in {.875, 1.875, 2.875}
    \draw (\x*\mypush cm,.5) node[lStyle] {$\sim$};

    \foreach \x/\lab in {0/{\alpha}, 1/{\gamma_1}, 2/{\gamma_2}, 3/{\beta}}
    \draw (2cm+\x*\mypush cm,2.5) node[lStyle] {$\lab$};
    }
\end{tikzpicture}
\captionsetup{width=.895\textwidth}
    \caption{Proving Claim 3: Vertex names (left) and key colourings in the reconfiguration sequence (right). %$\a, \c_1, \c_2, \b$.
    For simplicity, we show the case when $\a(x_1)=1, \b(y_1)=2, c=3$, and $\{x_1,x_2\}\cap \{y_1,y_2\}=\varnothing$.
    However, this intersection could be non-empty and/or we might have $\b(y_1)=\a(x_1)$.\label{clm3fig}}
\end{figure}

    Here $\a \sim \c_1$ and $\c_2 \sim \b$ follow by applying \Cref{twomatch-lem} to the very good pairs $x_1,x_2$ and $y_1,y_2$ in $G$, respectively.
    For $\c_1 \sim \c_2$ the argument is a bit more subtle since $z_1,z_2$ is not a very good pair in $G$.
    Instead we let $G':=G[V(H_2 \cup \{z_1,z_2\}]$, and use that $z_1,z_2$ is a very good pair in $G'$ with respect to the restrictions $\c_1{'},\c_2{'}$ of $\c_1,\c_2$ to $G{'}$.
    By \Cref{twomatch-lem}, we not only have that $\c_1{'} \sim \c_2{'}$ but also during the implied reconfiguration the colours of the cut-set $\{z_1,z_2\}$ never change.
    Since~$\c_1$ and $\c_2$ agree on $G-V(G{'})$, it follows that also $\c_1 \sim \c_2$.
    This proves the second statement.
    \end{claimproof}

    Consider $v\in V(H_2)$ such that $S\not\subseteq N(v)$; such a $v$ is guaranteed by \Cref{clm2A}.
    We assume that~$v$ is unfrozen in both $\a$ and $\b$.
    So by \Cref{good-pair-defn}, vertex $v$ has a good pair w.r.t.~$\a$ and also has a good pair w.r.t.~$\b$.
    The argument proving the second statement of \Cref{clm3A} actually works unless one of these good pairs intersects $S$.
    So assume that the good pair for $v$ w.r.t.~$\a$ is $w_1,w_2$ with $w_1\in S$; by symmetry, say $w_1=z_1$.
    This implies that $w_2\notin N(z_1)$.
    So assume that $\b$ is unfrozen for $w_2$.
    Now $w_2$ has a good pair $x_1,x_2$ w.r.t.~$\b$.
    Note that $\{w_1,w_2\}\cap \{x_1,x_2\}=\varnothing$.
    Furthermore, both of these good pairs are very good.
    Thus, we are finished by \Cref{clm1A}.
\end{proof}

\section{Proof of Connectedness}
\label{connect-sec}

In this section, we prove our Main Theorem:
Let $G$ be a connected graph with $\Delta\ge 3$ and let $L$ be a \pla{1} list-assignment of $G$.
If $\a,\b$ are unfrozen $L$-colourings, then $\a\sim \b$.
To begin, we sketch the proof.

If $G$ is $2$-connected, then we consider whether there exist neighbours $v,w$ such that $L(v)\not\subseteq L(w)$.
If not, then $G$ is regular with identical lists, and so we are done by \Cref{regular-thm}.
If so, then we recolour~$\a$ to get an $L$-colouring $\a'$ with $\a'(v)\notin L(w)$.
From $\a'$, we use the Key Lemma (applied to $G-v$) to reach an $L$-colouring $\b'$ that agrees with $\b$ except possibly on $N[v]$; and repairing the colouring on $N[v]$ takes a bit of work.
This is \Cref{2connected-lem}.

The bulk of our work goes toward the case when $G$ has a cut-vertex.
Our proof goes by induction on the number of blocks.
We find an endblock $H$ (block with a unique cut-vertex $v$), recolour $H-v$ from $\a$ to match $\b$, and recurse on $G-(H-v)$. 
One important subtlety is that our current colouring must remain unfrozen when restricted to this subgraph on which we recurse.
When $H$ is neither complete nor a cycle, we can either reuse \Cref{2connected-lem}, mentioned above, or we can find a very good pair in $H$, use the Key Lemma to recolour $G-H$ to agree with $\b$, and finish on $H$ by the $2$-connected case above.
Specifically, if $z_1,z_2\in V(H)-v$ is a very good pair for some vertex $w\in V(H)-v$, then we apply the Key Lemma to $G-\{z_1,z_2\}$, which allows us to recolour that subgraph arbitrarily, apart from avoiding on $N(z_1)\cup N(z_2)$ the colour that we are using on $z_1,z_2$.
But since $G-H\subseteq G-N(z_1)\cup N(z_2)$, we can recolour $G-H$ to match $\b$, as desired.

When $H$ is either complete or a cycle, we can easily reduce to the case that $H=K_2$.
Now walking away from the leaf $v$ in this $K_2$, we continue until we reach a vertex $w$ with $\deg(w)\ge 3$.
For this $v,w$-path $P$, we show how to get the desired colours on $P-w$, and again recurse on $G-(P-w)$. 
To do this, we consider the possibilities that $w$ has two neighbours off of $P$ that are either (i) adjacent or (ii) non-adjacent.
These final cases also serve as the base cases of our induction.

\subsection{Some Simple Cases}
\label{sec4.1}

In this subsection we handle some simple cases: cycles, paths, complete graphs, the claw, the paw, and $2$-connected graphs with not all lists identical.
These preliminary results both help develop intuition, as well as simplify the proof of the Main Theorem.
First we explain why in our Main Theorem we require that $\Delta\ge 3$.
If we did not, the statement would be false, as follows.

\begin{example}
    Let $C_n$ be a cycle with vertices $v_1,\ldots,v_n$, and let $e_i=v_iv_{i+1}$ (with subscripts modulo $n$).
    For a $3$-colouring $\a$,
    let $f_{\a}(e_i):=+1$ if $(\a(v_i),\a(v_{i+1}))\in \{(1,2),(2,3),(3,1)\}$ and $f_{\a}(e_i):=-1$ if $(\a(v_i),\a(v_{i+1}))\in \{(2,1),(3,2),(1,3)\}$.
    Finally, let $f(\a):=\frac13\sum_{i=1}^nf_{\a}(e_i)$.
    We call~$f(\a)$ the \emph{winding number} of $\a$.
    It is straightforward to check that $f(\a)$ is an integer for all $\a$.

    It is also easy to verify that if $\a\sim \b$, then $f(\a)=f(\b)$.
    That is, $f$ is invariant under recolouring.
    To prove this invariance, consider a vertex $v_i$ before and after a recolouring step.
    If $v_i$ is recoloured from $j$ to $k$, then $v_{i+1}$ and $v_{i-1}$ are both coloured $\ell$, where $j,k,\ell\in\{1,2,3\}$ are distinct.
    Say $\a$ and~$\a'$ are the colourings before and after we recolour $v_i$.
    So $\a(e_i)=-\a(e_{i-1})$ and $\a'(e_i)=-\a'(e_{i-1})$.
    Thus, $f(\a')=f(\a)$, as desired.

    For $n\ge 8$, we construct $3$-colourings $\a$ and $\b$ of $C_n$ with $f(\a)\ne f(\b)$.
    For $\a$, colour $v_1,\ldots,v_8$ as $1,2,1,2,1,2,1,2$; and for $\b$ as $1,2,3,1,2,3,1,2$.
    If $n>8$, then we extend both precolourings arbitrarily, but identically.
    We have $\sum_{i=1}^7f_{\a}(e_i)=1$, but $\sum_{i=1}^7f_{\b}(e_i)=7$.
    Thus, $f(\a)=f(\b)-2$.
    In fact, the number of equivalence classes of $3$-colourings of $C_n$ is at least $\lfloor (n+4)/6\rfloor$.
\end{example}

To avoid the subtleties in the previous example, we rely on the following sufficient condition.

\begin{lem}
\label{cycle-lem}
    Let $G=C_n$, for some $n\ge 3$.
    Fix a $3$-assignment $L$ for $G$.
    If $\a$ and $\b$ are $L$-colourings, neither of which is frozen, and $L$ is not identical on all vertices, then $\a \sim \b$.
\end{lem}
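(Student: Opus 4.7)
Because $L$ is not identical on all vertices, walking around the cycle we find an edge $uu'$ with $L(u) \ne L(u')$; fix a colour $c \in L(u) \setminus L(u')$ and let $v$ denote the other neighbour of $u$ on $C_n$. The key observation, used repeatedly, is that $c \notin L(u')$, so in \emph{every} proper $L$-colouring $\gamma$ of $C_n$ one has $\gamma(u') \ne c$. My strategy is to establish the following two claims and then combine them: (i) from any unfrozen $L$-colouring $\alpha$ of $C_n$ one can reach an $L$-colouring $\tilde\alpha$ with $\tilde\alpha(u)=c$; (ii) any two $L$-colourings in which $u$ has colour $c$ are equivalent under $\sim$. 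Together, applying (i) to both $\alpha$ and $\beta$ and then (ii) gives $\alpha \sim \tilde\alpha \sim \tilde\beta \sim \beta$.

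For (i), assume $\alpha(u) \ne c$ (else there is nothing to do). If $\alpha(v) \ne c$, then both neighbours of $u$ avoid $c$, so a single recolouring of $u$ to $c$ finishes. If $\alpha(v) = c$, I invoke \Cref{lem:unfreezeOneVertex_listsize_deg+2} to push unfrozenness from some unfrozen vertex of $\alpha$ along a shortest path in $C_n$ to $v$. By that lemma $v$ itself is never recoloured, so $\alpha(v) = c$ persists throughout; moreover, $u$ cannot acquire colour $c$ during the push because its neighbour $v$ retains colour $c$. Once $v$ is unfrozen, recolour $v$ to some $c' \ne c$ (possible since $v$ is unfrozen at the colour $c$), and then recolour $u$ to $c$, which is now available because $c \ne c'$ and the current colour of $u'$ is $\ne c$.

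For (ii), consider the path $P := C_n - u$, whose endpoints are $v$ and $u'$, equipped with the list-assignment $L'$ defined by $L'(v) := L(v) \setminus \{c\}$ and $L'(x) := L(x)$ for all $x \ne v$. Then $|L'(v)| = 2 = \deg_P(v)+1$, $|L'(u')| = 3 = \deg_P(u')+2$ (using $c \notin L(u')$), and each internal vertex $x$ has $|L'(x)| = 3 = \deg_P(x)+1$. Hence $L'$ is a $+1$-list-assignment on $P$ in which $u'$ has a $+2$-list, and the \textbf{Key Lemma} gives that $\C(P,L')$ is connected. The restrictions of $\tilde\alpha$ and $\tilde\beta$ to $V(P)$ are valid $L'$-colourings, because propriety in $C_n$ with $u$ coloured $c$ forces $\tilde\alpha(v), \tilde\beta(v) \ne c$, and each recolouring step in the resulting sequence in $P$ lifts to a valid recolouring in $C_n$ keeping $u$ fixed at $c$: recolourings of $v$ avoid $c$ by construction of $L'(v)$, recolourings of $u'$ automatically avoid $c$ since $c \notin L(u')$, and recolourings of internal vertices do not involve $u$'s neighbourhood.

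The main obstacle lies in part (i) when $v$ is initially frozen with colour $c$: neither $v$ nor $u$ can be recoloured in one step, yet $u$ requires $v$ to release $c$. The resolution exploits that \Cref{lem:unfreezeOneVertex_listsize_deg+2} delivers unfrozenness to an arbitrary target vertex \emph{without recolouring it}, so $v$ keeps colour $c$ throughout the push and becomes unfrozen only at the end, just in time to release $c$ for $u$.
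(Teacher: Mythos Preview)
Your proof is correct and follows essentially the same approach as the paper's: both exploit an edge $uu'$ with $L(u)\not\subseteq L(u')$, push unfrozenness along the cycle to free up the blocking neighbour, recolour $u$ with a colour $c\notin L(u')$, and then apply the Key Lemma to the path $C_n-u$. The paper organises this as a two-case argument (depending on whether $\beta$ already uses such a colour), while you route both $\alpha$ and $\beta$ through the common ``hub'' of colourings with $\gamma(u)=c$; these are cosmetic differences. One tiny imprecision: you write $|L'(v)|=2$, but in fact $c$ need not lie in $L(v)$, so $|L'(v)|\in\{2,3\}$; either way $L'$ is a $+1$-list-assignment on $P$ with $u'$ having a $+2$-list, so the Key Lemma still applies and your argument goes through unchanged.
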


\begin{proof}
    Denote the vertices of $G$ by $v_1,\ldots,v_n$.

\textbf{Case 1: There exists $\bm{i\in \{1,\ldots,n\}}$ such that $\bm{\b(v_i)\notin L(v_{i+1})}$, with subscripts modulo $\bm{n}$.}
    First we recolour vertices so that $v_i$ uses $\b(v_i)$; afterward, we can finish by applying the Key Lemma to $G-v_i$, since $|L(v_{i+1})|=3=\deg_{G-v_i}(v_{i+1})+2$.
    So assume that $\a(v_i)\ne \b(v_i)$, and further that $\a(v_{i-1})=\b(v_i)$; otherwise, we simply recolour $v_i$ with $\b(v_i)$.

    Since $\a$ is unfrozen, there exists $v_j$ that is unfrozen in $\a$; we assume $j=1$ (shifting indices if needed).
    We recolour $v_1$, if needed, to unfreeze $v_2$; recolour $v_2$, if needed, to unfreeze $v_3$, etc.
    Eventually, we unfreeze $v_{i-1}$, recolour it to avoid $\b(v_i)$, and recolour $v_i$ with $\b(v_i)$.

\textbf{Case 2: We are not in Case 1.}
Since the lists are not identical, there exists a vertex $v_i$ with $L(v_i)\not\subseteq L(v_{i+1})$.
So there exists an $L$-colouring $\c$ with $\c(v_i)\notin L(v_{i+1})$.
By applying Case 1 (twice), we have $\a \sim \c \sim \b$.
\end{proof}

Paths exhibit the same subtlety as cycles. 
To see this intuitively, we view a path and its lists as being formed from a cycle and its (identical) lists, by fixing the colours on one or more successive cycle vertices and deleting them, as well as deleting colours on deleted vertices from the lists of their undeleted neighbours.
Starting from inequivalent $3$-colourings of cycles, we get the same for paths.
This motivates our interest in the following sufficient condition for equivalence.

\begin{lem}
\label{path-lem}
    Let $G=P_n$, for some  $n\ge 3$.
    Fix a \pla{1} $L$ for $G$. %with $|L(v)|\ge \deg(v)+1$ for all $v\in V(G)$.
    If $\a$ and $\b$ are both unfrozen $L$-colourings and $|\cup_{v\in V(G)}L(v)|\ge 4$, then $\a \sim \b$.
\end{lem}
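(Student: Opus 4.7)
My plan is to induct on $n$, with the base case $n=3$ handled by direct case analysis. For the inductive step, I first observe that if any vertex has $|L(v)|\ge\deg(v)+2$, then the Key Lemma applies immediately, so I may assume all lists are tight: $|L(v_1)|=|L(v_n)|=2$ and $|L(v_i)|=3$ for every internal $v_i$. The main workhorse will be an \emph{endpoint reduction}: whenever an $L$-colouring $\gamma$ satisfies $\gamma(v_1)\notin L(v_2)$, any $L$-colouring $\a$ satisfies $\a\sim\gamma$. Indeed, $\a(v_2)\in L(v_2)$ forbids $\a(v_2)=\gamma(v_1)$, so we may directly recolour $v_1$ to $\gamma(v_1)$; the resulting path $P_{n-1}=G-v_1$ has $v_2$ carrying an untouched list of size $3$ but degree only $1$, i.e., a $+2$-list, so the Key Lemma on $P_{n-1}$ connects the restrictions of the two colourings, and lifting back to $G$ gives $\a\sim\gamma$. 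A symmetric reduction applies at $v_n$.

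This dispenses with two easy cases. In Case~1 ($\b(v_1)\notin L(v_2)$ or $\b(v_n)\notin L(v_{n-1})$), I apply the endpoint reduction with $\gamma=\b$. In Case~2 ($L(v_1)\not\subseteq L(v_2)$ or symmetrically at $v_n$), I pick $c\in L(v_1)\setminus L(v_2)$ and greedily build any $L$-colouring $\gamma$ with $\gamma(v_1)=c$; two applications of the endpoint reduction then yield $\a\sim\gamma\sim\b$.

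The substantive case is Case~3, where $L(v_1)\subseteq L(v_2)$ and $L(v_n)\subseteq L(v_{n-1})$. Since $|\cup L|\ge 4$ and $|L(v_2)|=3$, some colour $c^*\in\cup L\setminus L(v_2)$ must exist; because $L(v_1)\subseteq L(v_2)$, also $c^*\notin L(v_1)$. Let $j\ge 3$ be minimal with $c^*\in L(v_j)$; then by minimality $c^*\notin L(v_{j-1})$. The plan is to reach from each of $\a$ and $\b$ an intermediate $L$-colouring with $v_j=c^*$ (using an unfreezing-propagation argument in the spirit of the helper lemmas from Section~\ref{key-lem-sec} to first move $v_{j+1}$ off $c^*$ if necessary), and then handle the two sub-paths $(v_1,\ldots,v_{j-1})$ and $(v_{j+1},\ldots,v_n)$ separately. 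The left sub-path has $v_{j-1}$ carrying a $+2$-list (since $c^*\notin L(v_{j-1})$), so the Key Lemma applies there. For the right sub-path, either $c^*\notin L(v_{j+1})$ and the Key Lemma applies similarly, or $c^*\in L(v_{j+1})$ and I recurse on the strictly smaller right sub-path.

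The main obstacle lies in Case~3, particularly the subcase $c^*\in L(v_{j+1})$. There, I must verify that the right sub-path (with $L(v_{j+1})$ replaced by $L(v_{j+1})\setminus\{c^*\}$) still satisfies the lemma's hypotheses so that the induction step applies; in particular, removing $c^*$ might drop its list-union size below $4$. In short sub-paths (one or two vertices) the conclusion is trivial, but in general I expect to need to iterate the split, or to choose $c^*$ more carefully on the basis of the ``chain'' of nested list-containments, so that the strictly smaller sub-problem inherits either a $+2$-list vertex (making the Key Lemma directly applicable) or a list-union of size $\ge 4$ (making the inductive hypothesis directly applicable).
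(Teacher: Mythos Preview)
Your endpoint reduction and Cases~1 and~2 are fine, but Case~3 has a genuine gap that your closing paragraph flags without resolving. The right sub-path need not inherit either a $+2$-list or a list-union of size at least~4, and when it inherits neither, the inductive hypothesis is not merely inapplicable---its conclusion can be outright false. Concretely, take $n=7$ with
\[
L = \bigl(\{1,2\},\ \{1,2,3\},\ \{1,2,4\},\ \{1,2,4\},\ \{1,2,3\},\ \{1,2,3\},\ \{1,2\}\bigr).
\]
You are in Case~3, the unique choice is $c^*=4$, and $j=3$. After fixing $v_3=c^*$, the right sub-path is a $P_4$ with reduced lists $(\{1,2\},\{1,2,3\},\{1,2,3\},\{1,2\})$, and one checks directly that its unfrozen colourings split into two components of size~5 each. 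So even if you successfully steer both $\a$ and $\b$ to intermediates with $v_3=4$, their restrictions to the right sub-path may land in different components and you are stuck. Iterating the split does not help here, since after one split the sub-path already fails the $|\bigcup L|\ge 4$ hypothesis.

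The fix is essentially what the paper does: rather than anchoring at the endpoint and splitting, observe that once every list has size at most~3 and the union has size at least~4, there must exist an \emph{interior} index $i$ with $L(v_i)\not\subseteq L(v_{i+1})$ and $L(v_{i+1})\not\subseteq L(v_i)$ (otherwise consecutive size-3 lists are equal, forcing all lists into a common 3-set). Pick colours $a\in L(v_i)\setminus L(v_{i+1})$ and $b\in L(v_{i+1})\setminus L(v_i)$ and pass through an intermediate colouring with $v_i=a$ and $v_{i+1}=b$. Now $v_{i+1}$ is permanently unfrozen while $v_i=a$, letting you fix all $v_j$ with $j\ge i+3$ to their $\b$-colours; then restore $v_{i+1}=b$, which makes $v_i$ permanently unfrozen, and fix all $v_j$ with $j\le i-2$. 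Only the four vertices $v_{i-1},v_i,v_{i+1},v_{i+2}$ remain, and these are handled directly. This two-sided pivot avoids any recursion into a sub-path whose list-union might collapse.
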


\begin{proof}
    Denote $V(G)$ by $v_1,\ldots,v_n$.
    We assume that $|L(v_i)|\le 3$ for all $i$, or we are done by the Key Lemma.
    So by the hypothesis $|\cup_{v\in V(G)}L(v)|\ge 4$, there exists $i$ such that $L(v_i)\not\subseteq L(v_{i+1})$ and $L(v_{i+1})\not\subseteq L(v_{i+1})$.
    We assume that $\a(v_i)\notin L(v_{i+1})$ and $\a(v_{i+1})\notin L(v_i)$.
    If this is not the case, then we construct such an $L$-colouring $\c$.
    By the case we consider below, $\a\sim \c \sim \b$, as desired.

    We start from $\a$ and recolour, as follows, to reach $\b$.
    By \Cref{move-unfrozen-obs}, we assume that $v_i$ is unfrozen in $\b$.
    As long as $v_i$ is coloured from $\a(v_i)$, vertex $v_{i+1}$ remains unfrozen, so we can use~ $v_{i+1}$ to put $\b(v_j)$ on $v_j$ for all $j\ge i+3$.  (The proof is by induction on $n-j$.)
    After we have $v_j$ coloured with $\b(v_j)$ for all $j\ge i+3$, we recolour $v_{i+1}$ with $\a(v_{i+1})$.
    By swapping the roles of $i+1$ and $i$, we can use vertex $v_i$ to recolour $v_j$ with $\b(v_j)$ for all $j\le i-2$.
    (Equivalently, we relabel each vertex $v_j$ as $v_{n+1-j}$ and repeat the steps above.)
    So the only vertices where our current colouring differs from $\b$ are contained in $\{v_{i-1},v_i,v_{i+1},v_{i+2}\}$.

    By relabeling, we call these vertices $v_1,v_2,v_3,v_4$, and call the current colouring $\a$.
    Recall that~$v_2$ is unfrozen in $\b$.
    If $\b(v_1)\ne \a(v_2)$, then in $\b$ recolour $v_2$ with $\a(v_2)$; now we can recolour~$v_1$ with~$\a(v_1)$ and can recolour $v_3,v_4$ to match $\a$, since $v_3$ is unfrozen.
    Otherwise, we recolour $v_1$ to avoid $\a(v_1)$, first recolouring $v_2$ to unfreeze $v_1$ if needed, and conclude as above.
\end{proof}

The following lemma is easy to verify, essentially by brute force.
But stating it explicitly is convenient for simplifying later proofs.

\begin{lem}
\label{P3-lem}
    Let $G=P_3$ and $L$ be a \pla{1} for $G$.
    If $\a,\b$ are unfrozen $L$-colourings, then $\a\sim\b$.
\end{lem}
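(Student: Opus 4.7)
My plan is a reduction using the Key Lemma followed by a two-case split. Label $V(P_3) = \{v_1, v_2, v_3\}$ with $v_1v_2, v_2v_3 \in E(G)$. The \pla{1} hypothesis requires $|L(v_1)|, |L(v_3)| \ge 2$ and $|L(v_2)| \ge 3$. If any of these is strict, then some vertex has a \pl{2} and the Key Lemma immediately yields $\a \sim \b$, so I may assume $|L(v_1)| = |L(v_3)| = 2$ and $|L(v_2)| = 3$ exactly.

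I next split on whether $L(v_2) \subseteq L(v_1) \cup L(v_3)$. In the \emph{easy} case there exists a ``free'' colour $c \in L(v_2) \setminus (L(v_1) \cup L(v_3))$. My plan is to show that every unfrozen $L$-colouring is $\sim$-equivalent to one in which $v_2$ uses $c$; once $v_2$ is coloured $c$, both $v_1$ and $v_3$ become unfrozen (since $c$ lies outside $L(v_1)$ and $L(v_3)$, each endpoint has its other list-colour available) and can be recoloured to any values in their lists. The subclaim to verify is that whenever $\a(v_2) \ne c$, the vertex $v_2$ must already be unfrozen under $\a$: if it were frozen, then $L(v_2) = \{\a(v_1), \a(v_2), \a(v_3)\}$, but $c \in L(v_2)$ differs from $\a(v_1)$ and $\a(v_3)$ (as $c \notin L(v_1) \cup L(v_3)$) and from $\a(v_2)$ by assumption, a contradiction. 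Hence the recolouring $v_2 \to c$ is always available (since $c$ is absent from $N(v_2)$), and both $\a$ and $\b$ are $\sim$-equivalent to any fixed target colouring with $v_2 = c$, which gives $\a \sim \b$.

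In the \emph{hard} case $L(v_2) \subseteq L(v_1) \cup L(v_3)$. Since $|L(v_2)| = 3$ while $|L(v_i)| = 2$ for $i \in \{1,3\}$, this forces $L(v_1) \ne L(v_3)$, $|L(v_1) \cap L(v_3)| = 1$, and $L(v_2) = L(v_1) \cup L(v_3)$. Writing $L(v_1) = \{a,b\}$, $L(v_3) = \{b,c\}$, $L(v_2) = \{a,b,c\}$ for distinct $a,b,c$, there are exactly five proper $L$-colourings of $P_3$, which I list as $v_1v_2v_3$-triples: $abc$, $acb$, $bab$, $bac$, $bcb$. A direct check shows $abc$ is the unique frozen colouring, and the other four fit along the reconfiguration path $acb \sim bcb \sim bab \sim bac$, obtained by recolouring $v_1$, then $v_2$, then $v_3$. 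Hence all unfrozen colourings lie in a single component of $\C(G,L)$, giving $\a \sim \b$.

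The only conceptual point is the subclaim in the easy case that $v_2$ cannot be frozen when $\a(v_2) \ne c$; everything else is either an immediate application of the Key Lemma or a finite enumeration of five explicit colourings. I therefore do not anticipate any substantial obstacle beyond correctly organising the case split.
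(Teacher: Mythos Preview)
Your argument has a genuine gap in the hard case. From $L(v_2)\subseteq L(v_1)\cup L(v_3)$ with $|L(v_2)|=3$ and $|L(v_1)|=|L(v_3)|=2$ you correctly deduce $L(v_1)\ne L(v_3)$, but you \emph{cannot} conclude $|L(v_1)\cap L(v_3)|=1$. The intersection may be empty: take $L(v_1)=\{a,b\}$, $L(v_3)=\{c,d\}$, $L(v_2)=\{a,b,c\}$. Here $L(v_2)\subseteq L(v_1)\cup L(v_3)$, yet $L(v_1)\cap L(v_3)=\varnothing$, so this instance is not covered by your easy case and does not fit the parametrisation you analyse in the hard case. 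Your enumeration therefore misses an entire family of list-assignments.

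The fix is short. In the missing subcase $|L(v_1)\cap L(v_3)|=0$ we have $|L(v_1)\cup L(v_3)|=4>3=|L(v_2)|$, so some endpoint, say $v_3$, has a colour $d\in L(v_3)\setminus L(v_2)$. Now argue as in the paper (or symmetrically to your easy case, with the roles of centre and endpoint swapped): every unfrozen $L$-colouring can be recoloured to one with $v_3=d$, after which $v_2$ has a \pl{2} in $G-v_3$ and the Key Lemma on the edge $v_1v_2$ lets you reach any target on $\{v_1,v_2\}$, then fix $v_3$. Alternatively you can simply enumerate the six proper colourings in this subcase and check they form a single path in $\CLG$. Either way, once you add this subcase your case split becomes exhaustive and the proof goes through; apart from this omission the rest of your argument is correct.
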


\begin{proof}
    Denote $V(G)$ by $v_1,v_2,v_3$.
    If $L(v_1)\not\subseteq L(v_2)$, then we colour $v_1$ from $L(v_1)\setminus L(v_2)$, colour~$v_2$ and $v_3$ as desired, then recolour $v_1$ if needed.
    So assume $L(v_1)\subseteq L(v_2)$; similarly, assume $L(v_3)\subseteq L(v_2)$.
    By symmetry, assume $L(v_1)=\{1,2\}$, $L(v_2)=\{1,2,3\}$, and $L(v_3)=\{1,a\}$ with $a\in \{2,3\}$.
    For each value of $a$, there exist at most $5$ unfrozen $L$-colourings, and we check easily.
\end{proof}

\begin{lem}
\label{claw-lem}
    Let $G$ be formed from the claw $K_{1,3}$ by subdividing a single edge $0$ or more times.
    If~$L$ is a \pla{1} for $G$ and $\a,\b$ are unfrozen $L$-colourings, then $\a\sim \b$.
\end{lem}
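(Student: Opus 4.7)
Let $c$ denote the unique degree-$3$ vertex of $G$, with neighbours $u_1, u_2$ (leaves) and $q_1$ starting the (possibly trivial) subdivided arm $q_1, \ldots, q_m$ ending at the leaf $q_m$; thus $|V(G)| = m+3$ with $m \geq 1$, and $m = 1$ corresponds to $G = K_{1,3}$. By the remark preceding this section we may assume $|L(c)| = 4$, $|L(u_1)| = |L(u_2)| = |L(q_m)| = 2$, and $|L(q_i)| = 3$ for $1 \leq i \leq m-1$.

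The plan is to peel off the leaf $u_1$ and reduce to the path $G - u_1$ on $m+2 \geq 3$ vertices. In Step~1, I would choose a colour $c_0 \in L(u_1)$ and construct recolouring sequences transforming $\alpha, \beta$ into $L$-colourings $\alpha_*, \beta_*$ with $\alpha_*(u_1) = \beta_*(u_1) = c_0$. In Step~2, I would define $L'$ on $G - u_1$ by $L'(c) := L(c) \setminus \{c_0\}$ and $L'(v) := L(v)$ otherwise (so that $L'$ is a \pla{1} on $G - u_1$), and apply \Cref{P3-lem} (when $m = 1$, so $G - u_1 \cong P_3$) or \Cref{path-lem} (when $m \geq 2$, so $G - u_1 \cong P_{m+2}$ with $m+2 \geq 4$) to reconfigure $\alpha_*|_{G-u_1}$ to $\beta_*|_{G-u_1}$ under $L'$. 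Lifting this sequence to $G$, keeping $u_1$ fixed at $c_0$ throughout, yields $\alpha_* \sim \beta_*$ in $G$, whence $\alpha \sim \alpha_* \sim \beta_* \sim \beta$.

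The colour $c_0$ is chosen to satisfy \Cref{path-lem}'s hypothesis $|\bigcup_v L'(v)| \geq 4$ (needed only when $m \geq 2$). If $L(u_1) \not\subseteq L(c)$, I would take $c_0 \in L(u_1) \setminus L(c)$, so that $L'(c) = L(c)$ and the union already has four colours. Otherwise $L(u_1) \subseteq L(c)$, and either $L(q_1) \not\subseteq L(c)$ — in which case any $c_0 \in L(u_1)$ works, since $L'(c) \cup L(q_1)$ then contains four colours — or $L(q_1) \subseteq L(c)$, in which case $|L(u_1)| + |L(q_1)| = 5 > 4 = |L(c)|$ forces $L(u_1) \cap L(q_1) \neq \varnothing$, and we pick $c_0 \in L(u_1) \cap L(q_1)$ so that $L'(c) \cup L(q_1) = L(c)$.

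The main obstacle is Step~1: ensuring both that $u_1$ can actually be recoloured to $c_0$ and that $\alpha_*|_{G-u_1}$ is unfrozen under $L'$ (so that the path lemmas apply); the same issues must be handled for $\beta_*$. Recolouring $u_1$ to $c_0$ may be blocked if $c_0$ equals the current colour of $c$, in which case we first recolour $c$: this is immediate when $c$ is unfrozen, and otherwise the unfrozenness of $\alpha$ forces some neighbour of $c$ to be unfrozen, whose recolouring renders $c$ unfrozen (its three neighbours can then no longer span the three distinct colours $L(c) \setminus \{\alpha(c)\}$). Unfrozenness of $\alpha_*|_{G-u_1}$ under $L'$ follows from \Cref{lem:restrictedontosubgraph} as soon as some vertex of $G - u_1$ is unfrozen in $\alpha_*$; we can arrange this by first pushing unfrozenness from $u_1$ into $G - u_1$ via \Cref{lem:unfreezeOneVertex_listsize_deg+2} before committing to $u_1 = c_0$.
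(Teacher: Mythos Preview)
Your reduction (peel off the leaf $u_1$ and apply a path lemma to $G-u_1$) is a different route from the paper's (which first trims the long arm down to $K_{1,3}$ using the Key Lemma, then handles $K_{1,3}$ directly).  Your choice of $c_0$ to guarantee $|\bigcup L'(v)|\ge 4$ is correct and neat.  However, Step~1 has a genuine gap.

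Take $m=2$ with $L(u_1)=\{1,2\}$, $L(u_2)=\{2,3\}$, $L(c)=\{1,2,3,4\}$, $L(q_1)=\{1,3,4\}$, $L(q_2)=\{1,4\}$, and $\alpha=(u_1,u_2,c,q_1,q_2)=(1,2,3,4,1)$.  Here $L(u_1),L(q_1)\subseteq L(c)$, so your rule forces $c_0\in L(u_1)\cap L(q_1)=\{1\}$.  The only unfrozen vertex of $\alpha$ is $u_1$, and $\alpha(u_1)=c_0$ already, but $\alpha|_{G-u_1}$ is \emph{frozen} under $L'$.  Applying \Cref{lem:unfreezeOneVertex_listsize_deg+2} from $u_1$ to any $w\in\{c,u_2,q_1\}$ recolours $u_1\to 2$ and then (if $w\neq c$) $c\to 1$; to commit $u_1=1$ afterwards you must recolour $c$ away from $1$, and the only option $c\to 3$ re-freezes every vertex of $G-u_1$, returning you to $\alpha$.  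So ``push unfrozenness into $G-u_1$, then commit $u_1=c_0$'' cycles.  (Pushing all the way to $q_2$ happens to work here, but you have not argued that pushing to the far leaf always survives the commit step; and for $m=1$ there is no such far leaf to push to.)

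A second, smaller issue: ``the unfrozenness of $\alpha$ forces some neighbour of $c$ to be unfrozen'' is false as stated---the unfrozen vertex may lie deep in the arm---though this is easily repaired by first pushing towards $c$.

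The gap is not fatal: one can rescue the approach by a more careful case analysis on how the commit step interacts with $c$'s colour (or by allowing a different choice of $c_0$ or a different leaf $u_i$ to peel).  But as written, Step~1 does not establish that $\alpha_*|_{G-u_1}$ is unfrozen under $L'$.  The paper sidesteps this entirely: by colouring $c$ from $L(c)\setminus L(q_1)$ it makes $q_1$ a $+2$-list vertex in the arm, so the Key Lemma applies \emph{unconditionally}, with no need to track unfrozenness of restricted colourings.
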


\begin{proof}
   Let $v$ be the $3$-vertex, with neighbours $w_1,w_2,w_3$, with $w_1$ and $w_2$ being $1$-vertices, and with~$w_3$ being either a $1$-vertex or $2$-vertex.
    Let $P$ denote the path starting at $v$ and continuing through $w_3$ until we reach a leaf.
    We assume that $v$ is unfrozen in both $\a$ and $\b$.
    We first show that we can assume that $V(P)=\{v,w_3\}$, by showing that $\a\sim \c$ for some colouring $\c$ that agrees with $\b$ on $G-\{v,w_1,w_2,w_3\}$ and has $v$ unfrozen.

    Fix $c\in L(v)\setminus L(w_3)$.
    If $c$ is used on both $w_1$ and $w_2$, then choose a partial colouring $\c$ that agrees with $\b$ on $G-\{v,w_1,w_2,w_3\}$ and agrees with $\a$ on $\{w_1,w_2\}$, and extend it greedily to $G$.
    Applying the Key Lemma to $G-\{w_1,w_2\}$  (in which the size of the reduced list of $v$ is at least two larger than its degree) yields $\a \sim\c$, as desired.
    So instead assume $c$ is used on at most one of~$w_1$ and $w_2$; say $w_1$.
    Since $v$ is unfrozen, we can recolour it if needed to unfreeze $w_1$, then recolour $w_1$, and recolour $v$ with $c$.
    So we assume $v$ is unfrozen under $\a$, and uses colour $c$.
    Next choose a partial colouring $\c$ that agrees with $\b$ on $G-\{v,w_1,w_2,w_3\}$ and agrees with $\a$ on $\{v,w_1,w_2\}$ and extend it to $w_3$.
    Now apply the Key Lemma to $G-\{v,w_1,w_2\}$ (in which the size of the reduced list of $w_3$ is at least two larger than its degree) to obtain $\a \sim \c$.
    In both cases we managed to recolour $\a$ to match $\b$ on $P-\{v,w_3\}$.
    Thus, we assume $w_3$ is a $1$-vertex.
    That is $G=K_{1,3}$.

    Now suppose $L(w_1)\nsubseteq L(v)$.
    We recolour $w_1$ from $L(w_1)\setminus L(v)$, and can recolour $G-w_1$ by \Cref{P3-lem}, afterward recolouring $w_1$ to $\b(w_1)$ if needed.
    So we assume $L(w_i)\subseteq L(v)$ for all $i\in\{1,2,3\}$.
    We also assume $v$ is unfrozen in $\a$.
    Thus, we have $\a(w_i)=\a(w_j)$ for distinct $i,j\in\{1,2,3\}$.
    So we can recolour $v,w_k$, where $k=\{1,2,3\}\setminus\{i,j\}$ so that $w_k$ uses $\b(w_k)$.  %Afterward, we can finish by \Cref{P3-lem}.
    Afterward, $w_i$ and $w_j$ still have the same colour so $v$ remains unfrozen under $\a$, and we can finish on $G-w_k$ by \Cref{P3-lem}.
    \end{proof}

\begin{lem}
\label{paw-lem}
    Let $G$ be formed from the paw $K_{1,3}+e$ by subdividing the pendent edge $0$ or more times.
    If $L$ is a \pla{1} for $G$ and $\a,\b$ are unfrozen $L$-colourings, then $\a\sim\b$.
\end{lem}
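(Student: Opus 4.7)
The plan is to follow the structure of \Cref{claw-lem}. Let $v$ be the degree-$3$ vertex of the original paw, let $w_1, w_2$ be its two triangle neighbours (so $w_1w_2 \in E(G)$), and let $v = u_0, u_1, \dots, u_k = w_3$ be the pendant path after $k-1$ subdivisions (so $k\ge 1$). By the convention established in \Cref{regular-sec} we may assume $|L(x)| = \deg(x)+1$ for every vertex $x$, since otherwise the Key Lemma applies directly.

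We first reduce to the base case $k=1$. Since $|L(v)|=4>3=|L(u_1)|$, there is a colour $c\in L(v)\setminus L(u_1)$. Exactly as in \Cref{claw-lem}, we recolour $\a$ in a few steps so that $v$ receives colour $c$, taking care (via intermediate recolourings of $w_1$, $w_2$, or $u_1$) that some vertex stays unfrozen throughout. Since $c\notin L(u_1)$, the path component $u_1\cdots u_k$ of $G-v$ satisfies $|L(u_1)\setminus\{c\}|=3=\deg_{G-v}(u_1)+2$, so the Key Lemma applies there; we use it to match $\b$ on $\{u_2,\dots,u_k\}$, choosing the colour at $u_1$ so that $u_1$ stays unfrozen in $G$. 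A symmetric step from $\b$ lets us assume that $\a$ and $\b$ agree on $\{u_2,\dots,u_k\}$, and then \Cref{lem:restrictedontosubgraph} reduces the problem to the induced paw on $\{v,w_1,w_2,u_1\}$. Iterating reaches $k=1$.

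In the base case $V(G)=\{v,w_1,w_2,w_3\}$ with list sizes $4,3,3,2$, the plan is to first ensure $\a(w_3)=\b(w_3)$ and then finish on the triangle $G-w_3$. If $L(w_3)\not\subseteq L(v)$, we pick $d\in L(w_3)\setminus L(v)$, recolour so that $\a(w_3)=d$ (using the slack $|L(v)|=4$ to first recolour $v$ if needed), and then apply the Key Lemma to $G-w_3$: the restricted list of $v$ there has size $4=\deg_{G-w_3}(v)+2$ because $d\notin L(v)$. A final recolouring of $w_3$ from $d$ to $\b(w_3)$ completes the task. If instead $L(w_3)\subseteq L(v)$, the same idea works with $w_1$ or $w_2$ in place of $w_3$ whenever $L(w_i)\not\subseteq L(v)$, reducing to the path $G-w_i$ and invoking \Cref{P3-lem}.

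The main obstacle we expect is the residual subcase $L(w_1)\cup L(w_2)\cup L(w_3)\subseteq L(v)$ in the base case: since $|L(v)|=4$ and the three neighbour-lists together contain at most four colours, the lists are nearly determined, and forcing $\a(w_3)=\b(w_3)$ may push through temporary configurations in which most vertices are frozen. Handling this will require coordinated recolourings at $v$ and (when needed) at $w_1$ or $w_2$ to keep some vertex unfrozen throughout, together with a small finite case analysis in the spirit of \Cref{P3-lem}.
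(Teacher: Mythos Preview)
Your reduction to the base paw is essentially the paper's: both get $v$ coloured from $L(v)\setminus L(u_1)$ and then apply the Key Lemma on the pendant path. (The paper first peels off the side cases $L(w_i)\not\subseteq L(v)$ and $L(w_1)\not\subseteq L(w_2)$ via the Key Lemma on $G-w_1$, but the effect is the same.)

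The divergence is in the base case $G=K_{1,3}+e$. Your plan splits on whether some neighbour list escapes $L(v)$; when it does, you fix that neighbour and recurse on the remaining $P_3$ or $K_3$. There is a small gap in your second subcase: recolouring $w_1$ to some $d\in L(w_1)\setminus L(v)$ can be blocked by $w_2$ (which, unlike the leaf $w_3$, is also adjacent to $w_1$), so an extra move or two at $v$ and $w_2$ is needed first. More importantly, you correctly flag the residual case $L(w_1)\cup L(w_2)\cup L(w_3)\subseteq L(v)$ as the crux but leave it to an unspecified finite check; that is where the actual content lies, and ``a small finite case analysis'' is not yet a proof.

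The paper avoids this residual case entirely with a uniform two-step argument. First recolour $v$ once to unfreeze the leaf $w_3$. Since $\beta(w_1)\ne\beta(w_2)$, the colour $\alpha(v)$ differs from at least one of them; say $\alpha(v)\ne\beta(w_2)$. Now apply \Cref{clique-lem} to the triangle $\{v,w_1,w_2\}$ (with $w_3$ held fixed) to reach a colouring $\alpha'$ that agrees with $\alpha$ on $v,w_3$ and with $\beta$ on $w_2$. Finally delete $w_2$ and finish on the path $w_1\,v\,w_3$ via \Cref{P3-lem}, using that $w_3$ is unfrozen in $\alpha'$ and $v$ is unfrozen in $\beta$. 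This handles every list configuration at once and replaces your anticipated case analysis; you may want to adopt it rather than carry out the finite check.
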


\begin{proof}
    We label the vertices similarly to the proof of the previous lemma; the arguments are also similar, but we will no longer explicitly describe the intermediate colourings $\c$.
    Let $v$ be the $3$-vertex, let $w_1$ and $w_2$ be
    adjacent $2$-vertices, and let $w_3$ be either a $1$-vertex or an interior vertex of a path starting at $v$ and ending at leaf; we call this path $P$.
    We assume that $v$ is unfrozen in both~$\a$ and $\b$.
    We first show that we can assume $V(P)=\{v,w_3\}$; assume not.
    If $L(w_1)\not\subseteq L(v)$, then we recolour $w_1$ from $L(w_1)\setminus L(v)$, and apply the Key Lemma to $G-w_1$ to obtain a colouring that agrees with $\b$ on $P- \{v,w_3\}$, with $v$ unfrozen.
    Thus, we assume that $L(w_i)\subseteq L(v)$ for each $i\in\{1,2\}$. 
    Further, we assume that $L(w_1) \subseteq L(w_2)$, as otherwise we can recolour $w_1$ from $L(w_1)\setminus L(w_2)$ and apply the Key Lemma to $G-w_1$ as before, and then recolour $w_2$ if necessary to unfreeze $v$ again.
    So by symmetry, we assume $L(v)=\{1,2,3,4\}$ and $L(w_1)=L(w_2)=\{1,2,3\}$.
    Since $v$ is unfrozen in $\a$, either $\a(w_3)\notin L(v)$ or else $\a(w_3)=\a(w_i)$ for some $i\in\{1,2\}$.
    In either case, it is easy to recolour $v$ (and possibly $w_j$ with $j=3-i$) so that $v$ is coloured
    from $L(v)\setminus L(w_3)$.
    Now by applying the Key Lemma to $P-\{v\}$, we can recolour it to agree with $\b$ on $P-\{v,w_3\}$.
    If $v$ is frozen after this, we recolour $w_3$ to unfreeze it.
    Thus, we assume that $V(P)=\{v,w_3\}$ as claimed.
    That is, $G=K_{1,3}+e$.

    Recolour $v$, if needed, to unfreeze $w_3$.
    For simplicity, call the present colouring $\a$ (even after the previous paragraph).
    By symmetry between $w_1$ and $w_2$, assume $\a(v)\ne \b(w_2)$.
    Form $\a'$ from~$\a$ by recolouring $w_2$ with $\b(w_2)$ and recolouring $w_1$ arbitrarily.
    By \Cref{clique-lem} (the proof of which does not need the present lemma), we can recolour $\a$ to $\a'$.
    Now restricting to $G-w_2$, we can recolour~$\a'$ to $\b$ by \Cref{P3-lem}, since $w_3$ is unfrozen in $\a'$ and $v$ is unfrozen in $\b$.
\end{proof}

The next lemma (actually, just its non-trivial direction) is a special case of the lemma that follows it.
However, in this case we can give a proof that is independent of the Key Lemma.
This simpler proof leads to a sharper bound on the diameter, which is in fact nearly optimal.

\begin{lem}
\label{clique-lem}
    Let $G=K_n$, with $n\ge 2$, and fix a \pla{1} $L$.
    If $|\cup_{v\in V(G)}L(v)| = n$, then all lists are identical and every $L$-colouring of $G$ is frozen.
    Otherwise, $\a\sim \b$ for all $L$-colourings $\a,\b$.
    In fact, in the latter case, $\dist_{\CLG}(\a,\b)\le 3n/2+2$.
\end{lem}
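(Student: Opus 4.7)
The first assertion is immediate: if $|\bigcup_v L(v)|=n$, then because each $|L(v)|\ge n$, every list equals this common $n$-element set $S_0$; every proper $L$-colouring of $K_n$ is then a bijection $V(G)\to S_0$, and every vertex sees the remaining $n-1$ colours of $S_0$ on its neighbours, so no recolouring is possible and every $L$-colouring is frozen.

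For the non-trivial direction I will assume $|\bigcup_v L(v)|\ge n+1$, fix two $L$-colourings $\a,\b$, and construct a recolouring sequence from $\a$ to $\b$ of length at most $3n/2+2$. My main tool is the directed multigraph $H$ with vertex set $\a(D)\cup \b(D)$ and one edge $\a(v)\to\b(v)$ for each $v\in D:=\{v:\a(v)\ne\b(v)\}$. Since both $\a$ and $\b$ are injective on $V(G)$, every colour has in-degree and out-degree at most $1$ in $H$, and hence $H$ decomposes into vertex-disjoint directed paths and directed cycles whose edges are identified naturally with the vertices of $D$. The plan is to resolve these components one at a time.

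Directed paths $c_0\to c_1\to\cdots\to c_k$ (with edge $c_{i-1}\to c_i$ corresponding to $v_i\in D$) will be easy: the sink $c_k$ has out-degree $0$ and so lies outside $\a(V)$, hence recolouring $v_k$ to $c_k$ is valid and frees $c_{k-1}$; then $v_{k-1}$ can be recoloured to $c_{k-1}$, and so on, resolving the path in $k$ steps with no parking needed. Handling every path in this way uses $|V_{\mathrm{path}}|$ steps and leaves a colouring $\a'$ with $\a'(V)=\b(V)$ that agrees with $\b$ off the cycle vertices. Directed cycles will be harder: for a cycle $C$ of length $\ell$, I need a parking colour. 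When some $v_i\in C$ has $L(v_i)\not\subseteq\b(V)$, I will pick a free $c^*\in L(v_i)\setminus\b(V)$, park $v_i$ at $c^*$, rotate the remaining $\ell-1$ cycle vertices to their $\b$-targets in turn, and finally return $v_i$ to $\b(v_i)$, for a total of $\ell+1$ steps. The hard case is when $L(v_i)=\b(V)$ for every $v_i\in C$; here I will invoke an auxiliary vertex $u\notin C$ with $L(u)\not\subseteq\b(V)$ (whose existence is guaranteed by $|\bigcup_v L(v)|\ge n+1$, since any such $u$ could not satisfy the case-(b) condition), temporarily recolour $u$ to a free colour at the cost of $1$ step, use the now-free colour $\b(u)\in\b(V)=L(v_i)$ as the parking colour for every case-(b) cycle in turn, and finally restore $u$ to $\b(u)$ at the cost of $1$ further step.

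Every directed cycle of $H$ has length at least $2$ (no self-loops, since $\a(v)\ne\b(v)$ on $D$), so the number of cycles $r$ satisfies $r\le|V_{\mathrm{cycle}}|/2\le|D|/2$, and the total length of the recolouring sequence will be at most
\[
|V_{\mathrm{path}}|\;+\;\sum_{i=1}^{r}(\ell_i+1)\;+\;2\;=\;|D|+r+2\;\le\;\tfrac{3|D|}{2}+2\;\le\;\tfrac{3n}{2}+2.
\]
The principal obstacle is exactly the last case of the cycle analysis, where the entire cycle is trapped inside $\b(V)$ and no free colour is directly available on any cycle vertex; the auxiliary-vertex trick is precisely what the hypothesis $|\bigcup_v L(v)|\ge n+1$ buys, matching the sharpness of the statement.
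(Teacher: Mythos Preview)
Your proof is correct and follows essentially the same approach as the paper's own argument: build a functional digraph encoding the permutation structure of $\a$ versus $\b$, resolve directed paths from their sinks, open each cycle via a parking colour, and for the ``bad'' cycles whose lists are trapped inside $\b(V)$ borrow a free colour by temporarily displacing an auxiliary vertex $u$ with $L(u)\not\subseteq\b(V)$; your step count $|D|+r+2\le 3n/2+2$ matches the paper's $|E(\vec D)|+\#\text{cycles}(\vec D)+2\cdot\mathbf{1}$. The only cosmetic difference is that your digraph has colours as nodes while the paper's has the vertices of $G$ as nodes, but these encode the same path/cycle decomposition.
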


\begin{proof}
    The `if' statement is trivial, since every colour in $\cup_{v\in V(G)}L(v)$ is used in every $L$-colouring.

    Now we prove the `otherwise' statement.
    Fix $L$-colourings $\a,\b$.
    We build a digraph $\vecD$, with $V(\vecD)=V(G)$ and $\vec{vw}\in E(\vecD)$ if $\b(v)=\a(w)$.
    Note that $\Delta^+(\vecD)\le 1$ and $\Delta^-(\vecD)\le 1$.
    So $\vecD$ consists of directed paths, directed cycles, and isolated vertices.
    A \emph{bad cycle} is a directed cycle in~$\vecD$ for which all colours in the lists of its vertices are used either on other cycles or isolated vertices (which already have the correct colour).
    If a cycle is not bad, then it is \emph{good}.
    We prove the stronger bound $\dist_{\CLG}(\a,\b)\le |E(\vecD)|+\#\text{cycles}(\vecD)+2\times\mathbf{1}$, where $\mathbf{1}$ is the indicator function for $\vecD$ containing a bad cycle.
    Our proof is by induction on $|E(\vecD)|$; the base case $|E(\vecD)|=0$ holds trivially.

    Suppose $\vecD$ contains a good cycle.
    Specifically, there exists a directed cycle $\vecC$ and a colour $c$ and a vertex $v\in \vecC$ such that $c\in L(v)$, but $c$ is not currently used on any vertex of $G$.
    Recolour~$v$ with $c$, which opens $\vecC$ into a directed path, $\vec{P}$.
    Let $x$ be a sink of $\vec{P}$.
    Recolouring $x$ with $\b(x)$ removes one arc from $\vec{P}$; we repeat this process until all vertices of $\vec{P}$ are coloured correctly.
    More broadly, in the same way we handle every good cycle.
    So if $\vecD$ contains a cycle, then we assume it is a bad cycle, and $\mathbf{1}=1$.
    We handle all bad cycles together at the end.

    Suppose $\vecD$ contains a directed path.
    Similar to above, we recolour the sink $x$ of the path with~$\b(x)$, which shrinks the path by an edge, and we handle the shorter path similarly.
    So we assume $\vecD$ contains no paths.
    Note that recolouring good cycles and paths never creates bad cycles.

    Finally, suppose $\vecD$ contains only bad cycles (and isolated vertices).
    Since $|\cup_{x\in V(G)}L(x)| > n$, there exists an isolated vertex $w$ with $c\in L(w)$ that is currently unused on $V(G)$.
    By recolouring~$w$ with $c$ we `unlock' every bad cycle of $\vecD$ (changing it to a good cycle),
    recolour to fix all of these bad cycles, and then recolour $w$ with $\b(w)$.
    The key observation is that, because we unlock every bad cycle at once, we only need to perform this unlocking step at most once.
    So the total number of recolouring steps is at most $|E(\vecD)|+\#\text{cycles}(\vecD)+2\times\mathbf{1}$.
    Since each cycle has length at least~$2$, this gives the claimed bound $3n/2+2$.
    (With a bit more care, we can improve this bound to $(3n+1)/2$, which is sharp.)
\end{proof}

Now we handle the general case of $2$-connected graphs with lists that are not all identical.

\begin{lem}
\label{2connected-lem}
    Let $G$ be a connected graph with a \pla{1} $L$.
    If there exist $v,w\in V(G)$ such that $vw\in E(G)$, $L(v)\not\subseteq L(w)$, and $G-v$ is connected, then $\a\sim\b$ whenever $\a$ and $\b$ are both unfrozen $L$-colourings.
\end{lem}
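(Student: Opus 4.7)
My plan is to reduce the problem to the Key Lemma applied to the subgraph $G-v$. First fix a colour $c\in L(v)\setminus L(w)$ and define a list-assignment $L'$ on $G-v$ by $L'(u):=L(u)\setminus\{c\}$ for $u\in N_G(v)$ and $L'(u):=L(u)$ otherwise. Since $c\notin L(w)$, we have $|L'(w)|=|L(w)|\ge\deg_{G-v}(w)+2$, while every other vertex has a \pl{1} under $L'$; as $G-v$ is connected, the Key Lemma will give that $\C(G-v,L')$ is connected. Any two $L$-colourings of $G$ in which $v$ is assigned colour $c$ will then lie in the same $\sim$-class, since their restrictions to $G-v$ are $L'$-colourings and any $L'$-recolouring sequence lifts to an $L$-recolouring of $G$ keeping $v$ fixed at $c$ (by construction $L'$ excludes $c$ from $N(v)$, so no lifted step places $c$ on a neighbour of $v$). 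It therefore suffices to show that every unfrozen $L$-colouring $\alpha$ is $\sim$-equivalent to some $L$-colouring with $v$ coloured $c$.

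Given such an $\alpha$ with $\alpha(v)\ne c$, I will first run the recolouring procedure used to prove \Cref{lem:recolouringToUniqueBadNeighbour} with target colour $c$ on $v$: recolour each unfrozen neighbour of $v$ that uses $c$, then recolour $v$ if possible, then recolour the remaining (now unfrozen) neighbours using $c$, and finally recolour $v$ to $c$. This succeeds unless we land in the \emph{bad case}, reaching $\gamma$ with $\gamma(v)\ne c$, a unique neighbour $x$ of $v$ with $\gamma(x)=c$, and both $v$ and $x$ frozen. Note that $x\ne w$ since $c\notin L(w)$; moreover, the freezing of $v$ forces $\gamma|_{N(v)}$ to be a bijection onto $L(v)\setminus\{\gamma(v)\}$, so $\gamma(w)\in L(v)\setminus\{\gamma(v),c\}$ and $w$ is the unique vertex of $N(v)$ coloured $\gamma(w)$; analogously $v$ is the unique vertex of $N(x)$ coloured $\gamma(v)$.

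To escape the bad case I will exploit the connectedness of $G-v$. The colouring $\gamma$ is itself unfrozen (if the procedure performed any recolouring step, then the vertex recoloured last can be recoloured back, so $\gamma$ is unfrozen; otherwise $\gamma=\alpha$ is unfrozen by hypothesis), so I can pick $z\in V(G)\setminus\{v,x\}$ unfrozen under $\gamma$. Applying the push-unfrozenness technique of \Cref{lem:unfreezeOneVertex_listsize_deg+2} along a $z$-to-$w$ path in $G-v$ (the proof works for any path, not only a shortest one), there are two cases. If such a path can be chosen to avoid $x$, then after the push $w$ is unfrozen while $v,x$ still hold colours $\gamma(v),c$; I then recolour $w$ to some $c'\in L(w)\setminus\gamma(N(w))$ (necessarily $c'\ne c$), which removes $\gamma(w)$ from $N(v)$ and unfreezes $v$; recolour $v$ to $\gamma(w)$, which removes $\gamma(v)$ from $N(x)$ and unfreezes $x$; recolour $x$ off of $c$; and finally recolour $v$ to $c$. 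If instead every $z$-to-$w$ path in $G-v$ must pass through $x$, the push itself recolours $x$ away from $c$; halting the push at that moment frees $c$ on $v$ and unfreezes $v$, and I recolour $v$ to $c$.

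The main obstacle will be the bookkeeping during the push: intermediate colour choices must not place $c$ on any neighbour of $v$ other than $x$, or a new bad neighbour would be created. I expect to manage this using the flexibility that each vertex recoloured during the push has multiple available colours at the moment of recolouring, combined with the guaranteed ``$c$-safe'' edge $vw$ (since $c\notin L(w)$, colour $c$ can never appear at $w$). Producing such an $\alpha^*$ with $\alpha^*(v)=c$ and, symmetrically, $\beta^*$ with $\beta^*(v)=c$, the Key Lemma reduction from the first paragraph yields $\alpha\sim\alpha^*\sim\beta^*\sim\beta$.
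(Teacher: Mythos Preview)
Your overall strategy coincides with the paper's: fix $c\in L(v)\setminus L(w)$, observe that all $L$-colourings giving $v$ the colour $c$ lie in one $\sim$-class by the Key Lemma on $G-v$, and then argue that every unfrozen $L$-colouring can be recoloured to one with $v$ at $c$. The difficulty is entirely in this last step, and here your argument has a genuine gap that you yourself flag but do not close.

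Concretely, in your ``bad case'' you push unfrozenness from some $z$ to $w$ along a path in $G-v$. But this path may pass through vertices of $N(v)\setminus\{w,x\}$, and the push may recolour such a vertex $u$ to $c$ (nothing prevents this: $u$ need not be adjacent to $x$, and at the moment $u$ is recoloured there may be only one available colour, which could be $c$). It may also recolour some $u\in N(v)$ to $\gamma(w)$. Either way the bijection $N(v)\leftrightarrow L(v)\setminus\{\gamma(v)\}$ that you rely on for the chain ``recolour $w$, then $v$ to $\gamma(w)$, then $x$ off $c$, then $v$ to $c$'' can be destroyed: after recolouring $w$ you may find $\gamma(w)$ still present on $N(v)$, and even if you reach the last step, $c$ may now sit on some other neighbour of $v$. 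The same issue arises in your Case~B: the partial push up to $x$ may have already placed $c$ on another neighbour of $v$. Your remark that each pushed vertex ``has multiple available colours'' is not justified; in a \pla{1} an unfrozen vertex is only guaranteed one spare colour.

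The paper sidesteps this obstacle. Rather than pushing in $G-v$ towards $w$, it first uses \Cref{move-unfrozen-obs} to assume $v$ itself is unfrozen in $\beta$. It then \emph{defines} the target colouring $\beta_1$ in one shot (set $v$ to $c$, recolour each bad neighbour arbitrarily) rather than trying to reach it step by step, and proves $\beta_1\sim\beta$ by a short case analysis on the number of bad neighbours. In the delicate single-bad-neighbour case it localises to the three vertices $\{v,w,x\}$ and invokes \Cref{P3-lem} or \Cref{clique-lem}, using crucially that $w$ is unfrozen in $\beta_1$ because $\beta_1(v)=c\notin L(w)$. This avoids any long push through $N(v)$ and the attendant bookkeeping; you may find that adapting this localisation idea repairs your argument more cleanly than trying to control the push.
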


\begin{proof}
    It suffices to consider the case when $\a(v)\notin L(w)$.
    If not, then we form a new $L$-colouring~$\c$ by first colouring $v$ from $L(v)\setminus L(w)$ and then colouring each other vertex arbitrarily from its list to get a proper $L$-colouring.
    (This is possible since $L$ is a \pla{1}.)
    By the case we consider below, we have $\a \sim \c \sim \b$, as desired.

    By \Cref{move-unfrozen-obs}, we assume that $v$ is unfrozen in $\b$.
    If $|\b^{-1}(\a(v))\cap N(v)|\ne 1$, then let $\b_0:= \b$.
    Otherwise, let $x$ be the unique vertex in $\b^{-1}(\a(v))\cap N(v)$.
    We assume $x$ is unfrozen, by recolouring $v$ if needed; let $\b_0$ be the resulting colouring.

    Form $\b_1$ from $\b_0$ by recolouring $v$ with $\a(v)$ and recolouring each $z\in N(v)\cap \b_0^{-1}(\a(v))$ from $L(z)\setminus \left(\{\a(v)\}\cup \bigcup_{y\in N(z)\setminus\{v\}} \b_0(y)  \right)$.
    (Again, this is possible because $L$ is a \pla{1}.)
    To move from $\a$ to $\b_1$, we let $G':=G-v$, let $L'(z):=L(z)\setminus\{\a(v)\}$ when $z\in N(v)$, and let $L'(z):=L(z)$ otherwise.
    By hypothesis, $G'$ is connected.
    Since $L'$ is a \pla{1} for~$G'$, and $|L'(w)|\ge \deg_{G'}(w)+2$, the Key Lemma gives $\a|_{G'} \sim_{L'}\b_1|_{G'}$; thus $\a \sim_L \b_1$.

    We now consider three cases based on $|\b^{-1}_0(\a(v))\cap N(v)|$: it is at least $2$, it is $0$, or it is $1$.

    If $|\b^{-1}_0(\a(v))\cap N(v)| = 0$, then $\b_0$ differs from $\b_1$ only on $v$.
    Recolouring $v$ in $\b_1$ with $\b_0(v)$ shows that $\b_1\sim \b_0$; hence $\a\sim \b_1\sim \b_0\sim \b$, as desired.

    Suppose instead that $|\b^{-1}_0(\a(v))\cap N(v)| \ge 2$.
    Note that in $\b_1$ vertex $w$ is unfrozen because $\b_1(v)=\a(v)\notin L(w)$.
    So starting from $\b_1$ we recolour $w$, if needed, to unfreeze $v$ and recolour~$v$.
    This makes colour $\a(v)$ available for all $z$ in the independent set $\b^{-1}_0(\a(v))\cap N(v)$.
    After recolouring all such $z$ with $\a(v)$, we recolour $v$, if needed, to avoid $\b_0(w)$; this is possible because $|\b^{-1}_0(\a(v))\cap N(v)|\ge 2$.
    Finally, we recolour $w$ with $\b_0(w)$ and recolour $v$ with $\b_0(v)$.
    Thus, we have $\a \sim \b_1 \sim \b_0\sim \b$ whenever $|\b^{-1}_0(\a(v))\cap N(v)| \ne 1$.

    Finally, assume instead that $|\b^{-1}_0(\a(v))\cap N(v)| =1$.
    As above, we again let $x$ be the unique vertex in $\b^{-1}_0(\a(v))\cap N(v)$.
    Note that $\b_1(z)=\b_0(z)$ for all $z\notin\{v,x\}$.
    Let $G':=G[\{v,w,x\}]$ and $L'(z):=L(z)\setminus \cup_{y\in N(z)\setminus\{v,w,x\}}\b_1(y)$ for all $z\in \{v,w,x\}$.
    Let $\b_i'$ denote the restriction to $G'$ of~$\b_i$ for each $i\in\{1,2\}$.
    So it suffices to show that $\b_0'\sim_{L'}\b_1'$.
    Note that $v$ is unfrozen in $\b_0'$ and $w$ is unfrozen in $\b_1'$.
    If $G'\cong K_3$, then $|L'(z)|\ge 3$ for all $z\in \{v,w,x\}$, but $|\cup_{z\in \{v,w,x\}}L'(z)|\ge 4$ since $\a(v) \in L'(v)\setminus L'(w)$, so by~\cref{clique-lem} we get $\b_1'\sim_{L'}\b_0'$.
    Thus, we assume that $wx\notin E(G)$, so $G'\cong P_3$.
    Hence $|L'(v)|\ge 3$, $|L'(w)|\ge 2$, $|L'(x)|\ge 2$.
    So we are done by \Cref{P3-lem}.
\end{proof}

\subsection{Proof of the Main Theorem}
\label{sec4.2}

In this section, we prove our Main Theorem.
For convenience, we restate it below.

\begin{theorem}
\label{connected-thm}
    Let $G$ be a connected graph with $\Delta\ge 3$, and let $L$ be a \pla{1}.
    If $\a$ and $\b$ are both unfrozen $L$-colourings, then $\a\sim\b$.
    In fact, $\dist_{\CLG}(\a,\b)=O(|V(G)|^2)$.
\end{theorem}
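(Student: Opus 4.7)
The plan is to induct on the number of blocks of $G$. In the $2$-connected base case, if all lists are identical then the condition $|L(v)|=\deg(v)+1$ forces $G$ to be $\Delta$-regular and \Cref{regular-thm} finishes the proof; otherwise some edge $vw$ has $L(v)\not\subseteq L(w)$, and \Cref{2connected-lem} applies since $G-v$ is still connected. This handles the base case cleanly.

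For the inductive step, assume $G$ has at least two blocks and choose an endblock $H$ with unique cut-vertex $v$. The strategy is to recolour from $\a$ to an $L$-colouring agreeing with $\b$ on $V(H)\setminus\{v\}$, and then to recurse on $G' := G - (V(H)\setminus\{v\})$, which has fewer blocks than $G$. The persistent subtlety is that the intermediate colouring must remain unfrozen on $G'$; in practice we will leave $v$ (or a nearby vertex of $G'$) unfrozen before invoking the inductive hypothesis. If $H$ is neither a clique nor a cycle, then either \Cref{2connected-lem} applies directly inside $H$ (with the list at $v$ restricted by the colours that $\b$ forces in $G'-v$), or else $H$ contains a vertex $w\in V(H)\setminus\{v\}$ admitting a very good pair $\{z_1,z_2\}\subseteq V(H)\setminus\{v\}$. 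In the latter case we first drive the colouring so that $z_1$ and $z_2$ share a common colour $c$; then applying the Key Lemma to $G-\{z_1,z_2\}$ (in which $v$ enjoys a \pl{2} with respect to the reduced list-assignment) recolours everything outside $H$ to agree with $\b$ while preserving the colours on $z_1,z_2$. The remaining discrepancy lies inside $H$ and is resolved by the $2$-connected base case.

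If instead $H$ is a clique or a cycle, then \Cref{clique-lem} and \Cref{cycle-lem} let us reduce to the case $H=K_2$, meaning $v$ has a pendant neighbour in $G$. We then walk outward from $v$ along the resulting path $P$ of degree-$2$ vertices until we reach a vertex $w$ with $\deg_G(w)\ge 3$, which exists because $\Delta\ge 3$. Splitting on whether two of the neighbours of $w$ off $P$ are adjacent, the local structure around $w$ sits in a subdivided claw or a subdivided paw, so \Cref{claw-lem}, \Cref{paw-lem}, \Cref{path-lem}, and \Cref{P3-lem} together let us match $\b$ on $V(P)\setminus\{w\}$ while keeping $w$ unfrozen in $G'$. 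After this the induction on $G'$ concludes.

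For the diameter bound, each recursive step costs $O(|V(H)|\cdot|V(G)|)$ recolourings, dominated by a single application of the Key Lemma to a subgraph of size $O(|V(G)|)$; summing over the at most $|V(G)|$ blocks telescopes to $O(|V(G)|^2)$ since $\sum_H |V(H)| = O(|V(G)|)$. The main obstacles I anticipate are (i) carefully maintaining an unfrozen vertex of $G'$ throughout each recursion, which may force an extra ``parking'' recolouring at $v$ or at a neighbour of $v$ before or after each Key Lemma application; and (ii) the delicate local case analysis in the $H=K_2$ reduction, where the hypothesis $\Delta\ge 3$ must be used precisely to dodge the winding-number obstruction illustrated by $C_n$ with identical lists.
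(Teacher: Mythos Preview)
Your proposal is correct and follows essentially the same route as the paper: induct on the number of blocks, dispatch the $2$-connected base case via \Cref{regular-thm} or \Cref{2connected-lem}, use a very good pair inside an endblock $H$ together with the Key Lemma to reduce to $H$ being an edge, and then handle the resulting pendant path via the subdivided-claw and subdivided-paw lemmas. Two small slips to watch: in the very-good-pair step it is the common neighbour $w$ of $z_1,z_2$ (not the cut-vertex $v$) that acquires a \pl{2} in $G-\{z_1,z_2\}$; and you should guard against the recursed-upon graph $G'$ dropping to $\Delta(G')\le 2$, which the paper handles by taking subdivided $K_{1,3}$ and subdivided $K_{1,3}+e$ as additional base cases.
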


\begin{proof}
    Our proof is by induction on the number of blocks in $G$, and it follows the outline we gave just before \Cref{sec4.1}.
    The base case is when (i) $G$ is formed from $K_{1,3}$ by subdividing a single edge $0$ or more times, (ii) $G$ is formed from $K_{1,3}+e$ by subdividing the pendent edge $0$ or more times, or (iii) $G$ is $2$-connected with $|V(G)|\ge 3$.
    Cases (i) and (ii) are handled by Lemmas~\ref{claw-lem} and~\ref{paw-lem}.
    Case (iii) is handled by either \Cref{regular-thm} or \Cref{2connected-lem}. 
    
    Now suppose that $G$ has a cut-vertex.
    Let $H$ be a leaf in the block tree, with a unique cut-vertex~$v$.
    Let $w$ be a neighbour of $v$ in $H$.

    \setcounter{claim}{0}
    \begin{claim}
    \label{clm1B}
    $L(w)\subseteq L(v)$ and $L(x)=L(w)$ for all $x\in V(H)-v$.
    \end{claim}

    \begin{claimproof}
    Suppose there exist distinct vertices $x_1,x_2\in V(H)-v$ with $L(x_1)\ne L(x_2)$.
    Since $H-v$ is connected, we can assume that $x_1x_2\in E(H)$.
    By symmetry, we assume that $L(x_1)\not \subseteq L(x_2)$.
    But now we are done by \Cref{2connected-lem}.
    Thus, we assume that $L(x_1)=L(x_2)$; hence, $L(x)=L(w)$ for all $x\in V(H)-v$.
    Furthermore, if $L(w)\not\subseteq L(v)$, then we are again done by \Cref{2connected-lem}.
    This proves the claim.
    \end{claimproof}

    \begin{claim}
    \label{clm2B}
    If $H$ is not a cycle, clique, or clique with an edge subdivided (to create vertex $v$), then there exist $x,y\in V(H)-v$ such that $\dist(x,y)=2$ and $H-\{x,y\}$ is connected.
    \end{claim}

    \begin{claimproof}
    By \Cref{clm1B}, and the fact that $H$ is not a cycle, all vertices in $H-v$ have equal degrees, which are at least $3$.
  
    First suppose that there exists $z\in V(H)$ such that $\dist(v,z)\ge 3$.
    Note that~$z$ has non-adjacent neighbours, say $x,y$ (since $H-v$ is connected and all its vertices have the same degree in $G$).
    If $H-\{x,y\}$ is connected, then we are done.
    Assume instead that $H-\{x,y\}$ is not connected; so $y$ is a cut-vertex in $H-x$.
    Thus, $H-x$ has at least two endblocks, and each has a non-cut-vertex adjacent to $x$; call these $x'$ and $y'$.
    Now $H-\{x',y'\}$ is connected (since $\deg(x)\ge 3$),
    and $\dist(x',y')=2$, as desired.

    Assume instead that $\dist(v,z)\le 2$ for all $z\in V(H)-v$.
    Let $V_i:=\{z\in V(H): \dist(v,z)=i\}$ for each $i\in \{1,2\}$.
    And let $H_i:=H[V_i]$.
    We assume that $V_2\ne \varnothing$ (otherwise, we simply let $x,y$ be non-adjacent vertices in $H_1$, which exist because $H$ is not a clique).
    If any vertex of $V_1$ has neighbours in at least two components of $H_2$, then we are done; so assume not.
    Note that each component of $H_2$ has at least two neighbours in $H_1$, since $H$ is $2$-connected.
    Note that each component of $H_2$ must be a clique, or we pick $x,y$ in such a component, and we are done.
    Similarly, if $x\in H_1$ has a neighbour in some component $C$ of $H_2$, then $x$ is adjacent to all vertices of $C$ (or we take $y$ to be a nearest non-neighbour in $H_2$).
    If any vertex $x$ in $V_1$ has no neighbour in $V_2$, then we are done (note that some such $x$ is distance $2$ from some vertex $y\in V_2$, and $H-\{x,y\}$ is connected); so assume not.

    If $H_2$ has at least two components, then there exist $x,x'\in V_1$ adjacent to different components of $H_2$ such that $xx'\in E(H)$ (since $H-v$ is connected).
    So if $H_2$ has at least two components then we are done; consider $x,x'\in V_1$ with neighbours in different components, and pick $y\in V_2$ adjacent to $x'$.
    Now $\dist(x,y)=2$ and $H-\{x,y\}$ is connected.
    Thus, we conclude that $H_2$ has a single component.

    If $H_1$ has at least three vertices, then let $x,y$ be any non-adjacent pair (such a pair exists, since all vertices in $V_1\cup V_2$ have the same degree, but each vertex of $V_1$ is adjacent to $v$).
    So we conclude that $|V_1|=2$.
    But now $H$ is formed from a clique by subdividing a single edge (with the vertices in $V_1$ as its endpoints) to form $v$; this contradicts the hypothesis.
    \end{claimproof}

    \begin{claim}
    \label{clm3B}
    $H$ is an edge, cycle, or a larger clique.
    \end{claim}

    \begin{claimproof}
    First suppose that $H$ is not a clique with an edge subdivided (to create vertex $v$).
    By the previous claim, there exist $x,y\in V(H)-v$ such that $\dist(x,y)=2$ and $H-\{x,y\}$ is connected.
    Let $z$ be a common neighbour of $x$ and $y$.

    We assume that $v$ is unfrozen in both $\a$ and $\b$.
    By \Cref{clm2A}, $L(x)=L(y)$ so there exists a colouring $\c_1$ that uses a common colour $c$ on $x$ and $y$ and agrees with $\a$ on $G-V(H)$.
    Similarly, there exists a colouring $\c_2$ that also uses $c$ on $x$ and $y$ and agrees with $\b$ on $G-V(H)$.

    Let $\a',\b',\c_1',\c_2'$ denote the restrictions to $H$ of $\a,\b,\c_1,\c_2$.
    Form $L'$ from $L$ by restricting to~$H$ and deleting from $L(v)$ all colours used by $\a$ on neighbours of $v$ in $G-H$.
    Since $\a'$ and $\c_1'$ are both unfrozen, by the base case, $\a'\sim_{L'}\c_1'$; thus, $\a\sim_L\c_1$.
    Analogously $\b\sim_L\c_2$.
    Furthermore, $\c_1\sim_L\c_2$, by \Cref{twomatch-lem} applied to the very good pair $x,y$.
    Hence, $\a\sim \c_1\sim \c_2\sim \b$, as desired.

    Finally, assume that $H$ is formed from a clique by subdividing a single edge (to create $v$).
    Let $x,y$ denote the neighbours in $H$ of $v$.
    Since $L(x)=L(y)$ and $N(x)=N(y)$, by possibly recolouring~$y$ we can assume that $\a(x)=\a(y)$ and $\b(x)=\b(y)$.
    So we can form $G'$ from $G$ by identifying $x$ and $y$.
    But now $|L(v)|\ge \deg_{G'}(v)+2$, so we are done by the Key Lemma.
    \end{claimproof}

    \begin{claim}
    \label{clm4B}
        $H$ is an edge.
    \end{claim}

    \begin{claimproof}
    Suppose $H$ is a larger clique (order at least $3$).
    Recall that $v$ is the unique cut-vertex of~$H$,
    and let $u$ be a neighbour of $v$ not in $H$.
    Make $u$ unfrozen in both $\a$ and $\b$.
    Form $\a'$ from $\a$ by recolouring $H$ as in $\b$, except that $v$ keeps it colour and if some neighbour $x$ of $v$ in $H$ has $\b(x)=\a(v)$, then $x$ is coloured arbitrarily (if no such $x$ exists, then choose $x$ to be an arbitrary neighbour in $H$ of $v$).
    Finally, if needed, recolour $u$ so that $|\cup_{y\in H}L_H(y)|>|H|$; to form $L_H(v)$ from $L(v)$, delete all colours used on neighbours of $v$ outside $H$.
       
    Now we can recolour $H$ from $\a$ to~$\a'$.
    Note that $u$ is unfrozen in $\b$ (by assumption) and in $\a'$ since the open neighbourhood $N(u)$
    is coloured identically in $\a'$ and $\a$.
    Let $G':=G-(H-\{v,x\})$.
    Since $\a'$ agrees with $\b$ in $H-\{v,x\}$, we can restrict to $G'$ whenever $\Delta(G')\ge 3$.
    But if $\Delta(G')\le 2$, then we form $G''$ from $G'$ by adding back another neighbour of $v$ (in $H$), and $G''$ is handled by \Cref{paw-lem}.

    Assume instead that $H$ is a $4^+$-cycle.
    Let $v$ be a cut-vertex of $H$ 
    and $u$ be a neighbour of $v$ not in $H$.
    We assume that $u$ is unfrozen in $\a$ and recolour $u$, if needed, to ensure that $|\cup_{x\in H}L_H(x)|\ge 4$; here for $L_H(v)$ we remove from $L(v)$ all colours used on $N(v)\setminus V(H)$, and otherwise, $L_H(x):=L(x)$.
    Form $\a'$ from $\a$ by recolouring $H-v$ as in $\b$, and colouring $v$ arbitrarily to make the colouring proper.
    (Possibly we need $\a'(v)\ne \b(v)$ because $\a'(y)=\a(y)=\b(v)$ for some $y\in N(v)\setminus V(H)$.)
    We also know that $\a'$ is unfrozen on $H$ because $|\cup_{x\in H}L_H(x)|\ge 4$, by construction.
    Thus, we can assume $\a'$ is unfrozen on $N[v]\cap V(H)$; if not, then we `push' this unfrozenness around the cycle (we do this in both $\a'$ and $\b$, since all vertices in $V(H)\setminus N[v]$ have identical colourings of their closed neighbourhoods in $\a'$ and $\b$).
    Let $z$ be an arbitrary neighbour of $v$ in $N[v]\cap V(H)$; further, pick $z$ to be unfrozen (in $\a'$) if $v$ is frozen.
    So we can restrict to $G-(H-\{v,z\})$; this works since in $\b$ vertex $u$ is unfrozen, and in $\a'$ either $v$ or $z$ is unfrozen.
    \end{claimproof}

By the previous claim, every endblock is $K_2$.
Let $P$ be a maximal path, containing some endblock, such that all internal vertices of $P$ have degree $2$ in $G$.
Let $v$ be an endpoint of $P$ such that $\deg_G(v)\ge 3$.
Pick neighbours $w,x$ of $v$, not in $P$, as follows.
If possible, pick $w,x$ to be non-adjacent; otherwise, if possible, pick $w$ to be a cut-vertex.
Let $y$ be the neighbour of $v$ on $P$.

First suppose that $w,x$ are non-adjacent.
Consider the subgraph $J$ induced by $\{w,x\}\cup V(P)$; note that $J$ is formed from $K_{1,3}$ by subdividing one edge $0$ or more times.
To begin, we show that we can assume that $V(P)=\{v,y\}$.
By \Cref{move-unfrozen-obs}, we assume that $w$ is unfrozen in $\a$.
Form $\a'$ from $\a$ by recolouring $V(P)-\{v,y\}$ to agree with $\b$ and recolouring $y$ arbitrarily to give a proper colouring; note that $w$ is also unfrozen in $\a'$.
Now we can recolour $\a$ to $\a'$ by \Cref{claw-lem}, and recurse on $G-(P-\{v,y\})$; that is, we can assume that $V(P)=\{v,y\}$, as desired.

Our goal now is to recurse on $G-y$.
To this end, we form $\a''$ from $\a'$ by recolouring $y$ with~$\b(y)$, recolouring $v$ with a colour not in $L(y)$, and recolouring $w$ and $x$ arbitrarily to get a proper colouring.
Note that $y$ is unfrozen under $\a''$, due to the colour of $v$.
Again by \Cref{claw-lem}, we can recolour from $\a'$ to $\a''$.
If $G-y$ is a path, then $G$ is a subdivision of $K_{1,3}$.
In that case we can apply the above reduction via~\cref{claw-lem} to each of the maximal paths incident to the central vertex, after which we have reduced to the case that the graph is $K_{1,3}$ and we can finish by applying~\cref{claw-lem} one final time.
Thus, we assume that $G-y$ is neither a path nor a cycle (since every end-block is an edge).

As $\a''(y)=\b(y)$, we can recurse on $G-y$ if it contains an unfrozen vertex; so assume it does not.
In particular, we assume that $v,w,x$ are frozen and that $y$ is the only unfrozen vertex.
In this case we will recolour $G[\{v,w,x,y\}]$ to unfreeze one of $v,w,x$.
By symmetry, we assume
that $L(v)=\{1,2,3,4\}$, $\a''(v)=1$, $L(y)=\{2,3\}$, $\a''(y)=\b(y)=2$, $L(w)=\{1,3\}$, $\a''(w)=3$, $L(x)=\{1,4\}$, and $\a''(x)=4$.
Now we recolour $y$ with 3, $v$ with 2, $w$ and $x$ with 1, $v$ with 4, and~$y$ with 2.
Now $v$ is unfrozen, and we used the final two recolourings to ensure that $y$ ends up with colour $\b(y)$ again.
So now we can recurse on $G-y$.

Suppose instead that $w,x$ are adjacent.
First suppose that $w$ is a cut-vertex.
As in the previous paragraph, we let $J:=G[\{w,x\}\cup V(P)]$, and we can assume that $V(P)=\{v,y\}$ by \Cref{paw-lem}.
We recolour $J[\{v,w,x,y\}]$ so that $y$ matches $\b(y)$.
If at least one of $v,w,x$ is unfrozen, then we're done by induction on $G-y$; so assume instead that only $y$ is unfrozen.

Up to renaming colours, we assume that $L(v)=\{1,2,3,4\}$, $\a''(v)=1$, $L(y)=\{2,3\}$, $\a''(y)=2$, $L(w)=\{1,3,4\}$, $\a''(w)=3$, $L(x)=\{1,3,4\}$, and $\a''(x)=4$.
Now we recolour $y$ with 3, $v$ with~2, $w$ with 1, and $x$ with 3.
Now we can get any colours we like (in particular $\b$) on all vertices that are not neighbours of $x$ and lie in the component $H'$ of $G-\{x,y\}$ that contains $v$.
This uses \Cref{twomatch-lem} on the very good pair $x,y$ in $G[V(H')\cup\{x,y\}]$.
Since $w$ is a cut-vertex, this fixes the colouring on at least one additional block.
Similarly, if $H'$ is the block of $G$ containing $v,w,x$, then this approach works if any vertex of $H'$ is a cut-vertex in $G-\{x,y\}$.
Thus, we conclude that $H'$ is an endblock; in fact, $G$ consists only of $H'$ and $P$.
But now $H'$ must be $K_2$, by \Cref{clm4B} above, which is a contradiction. 
This finishes the inductive step and completes the proof.
\end{proof}

\begin{remark}
    We note that the implicit multiplicative constant, call it $C$, in the bound $\diam \CLG=O(|V(G)|^2)$ need not be too large.
    We have not made an effort to optimise $C$, but it is straightforward to check that we can take $C=10$. %}
\end{remark}

\subsection{Few Frozen Colourings}

To complete the proof of the Main Theorem, it remains to show that the number of frozen colourings is vanishingly small compared to the total number of proper colourings, provided $\Delta$ is sufficiently small compared to $n$.

\begin{lem}
\label{lem:exp_nonfrozen_swap}
    Consider a \pla{1} $L$ for a connected graph $G$ with $n$ vertices and maximum degree $\Delta$.
    Let $\CLG^f$ denote the collection of frozen $L$-colourings of $G$.
    Then
    \[\frac{|\CLG^f|}{|\CLG|} \leq 2^{-n/\Delta^4},\]
    unless $G$ is a complete graph.
\end{lem}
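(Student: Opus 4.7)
The approach is a switching-based injection, in the spirit of Bonamy, Bousquet, and Perarnau's argument~\cite{BBP21} for uniform lists. The plan is to associate to each frozen $L$-colouring $\c$ a set $S(\c) \subseteq V(G)$ with $|S(\c)| \geq n/\Delta^4$ and, for each $v \in S(\c)$, a local switching operation $\sigma_v$ on $\c$ whose effect is confined to the ball $B_2(v)$. Arranging the vertices of $S(\c)$ to be pairwise at graph-distance greater than $4$ will make the switches commute and have disjoint supports, so that for each subset $T \subseteq S(\c)$ one obtains a distinct $L$-colouring $\sigma_T(\c)$. An injection $\CLG^f \times 2^{S(\c)} \hookrightarrow \CLG$ then yields $|\CLG^f|/|\CLG| \leq 2^{-n/\Delta^4}$.

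The key structural ingredient is that any frozen $L$-colouring $\c$ is extremely rigid: at every vertex $x$ we must have $|L(x)| = \deg(x) + 1$ and $L(x) = \{\c(x)\} \cup \c(N(x))$, so all neighbours of $x$ receive pairwise distinct colours. Because $G$ is connected and not complete, some vertex $v_0$ has two non-adjacent neighbours. To build $S(\c)$, I would greedily pack vertices $v$ admitting the following \emph{switchable configuration} (relative to $\c$): there exist non-adjacent $u, w \in N(v)$ and a vertex $u' \in N(u)$ with $\c(u') = \c(w)$. Either most vertices of $G$ are switchable, in which case a greedy packing at pairwise distance greater than $4$ gives $|S(\c)| \ge n/\Delta^4$ since $|B_4(v)| \le 1+\Delta+\cdots+\Delta^4$; or the set of non-switchable vertices is so small and structured that a weaker packing still suffices (noting that $n/\Delta^4 < 1$ whenever $\Delta$ is at all comparable to $n^{1/4}$, in which case a single switchable vertex, guaranteed by $G\ne K_n$, is enough).

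The switch $\sigma_v$ at a switchable vertex $v$ is the simple colour exchange between $u$ and $u'$: set $\sigma_v(\c)(u) := \c(w)$, $\sigma_v(\c)(u') := \c(u)$, and leave all other vertices untouched. Frozenness of $\c$ at $u$ and at $u'$ guarantees that each of them has a \emph{unique} neighbour carrying the colour it is about to receive, so the swap produces a proper $L$-colouring; non-adjacency $u \nsim w$ ensures $w$ is unaffected; and after the swap, $u$ and $w$ share colour $\c(w)$, freeing the colour $\c(u)$ on $N(v)$ and making $v$ unfrozen. Because $\sigma_v$ modifies only $\{u, u'\} \subseteq B_2(v)$, switches for $v \in S(\c)$ with pairwise distance greater than $4$ have disjoint supports, and the composite $\sigma_T$ is well-defined. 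Given $\sigma_T(\c)$, one recovers $T$ by checking which $v \in S(\c)$ are unfrozen in $\sigma_T(\c)$: switches at $v' \ne v$ do not touch $B_1(v)$, so such a $v$ is unfrozen in $\sigma_T(\c)$ if and only if $v \in T$; then undoing each $\sigma_v$ for $v\in T$ recovers $\c$.

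The main obstacle is handling vertices $v$ having non-adjacent neighbours but failing the switchability condition because no suitable $u'$ exists (equivalently, $\c(w) \notin L(u)$ for every candidate pair). In this degenerate situation the lists and the colouring around $v$ are very tightly constrained, and the plan is either to extend $\sigma_v$ to a longer Kempe-style exchange still confined to $B_2(v)$ (at the cost of a more intricate invertibility argument), or to show that such bad $v$ are rare and clustered enough not to reduce $|S(\c)|$ below $n/\Delta^4$. Managing this case analysis while keeping the canonical choice of $(u_v, w_v, u_v')$ recoverable from $\sigma_T(\c)$ (for instance by picking the lexicographically first valid triple, which depends on $\c$ only via its restriction to $B_2(v)$ and is thus invariant under the other switches) is the most delicate part of the proof.
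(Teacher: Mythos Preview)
Your high-level strategy (switching plus a distance-packing to get independent local moves, then an injection) is exactly right and matches the paper's. But two choices in your execution create real gaps that the paper avoids with a simpler construction.

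First, your set $S(\c)$ depends on the colouring $\c$. This makes the injection argument circular: to recover $T$ from $\sigma_T(\c)$ you propose checking which $v\in S(\c)$ are unfrozen, but you do not know $\c$, so you do not know $S(\c)$. You would need to argue that $S(\c)$ (and the canonical triples $(u_v,w_v,u'_v)$) can themselves be reconstructed from $\sigma_T(\c)$, which you have not done; and since the switches change the colouring inside $B_2(v)$ for $v\in T$, the ``switchable'' property at those vertices need not persist, so this reconstruction is not obvious.

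Second, your ``main obstacle'' (a vertex $v$ with non-adjacent neighbours but $\c(w)\notin L(u)$ for every candidate pair) is genuinely unresolved, and the Kempe-style extension you sketch would make invertibility even harder.

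Both problems disappear once you notice a cleaner switch, which is what the paper uses. For \emph{any} edge $vw$ with $N[v]\ne N[w]$, swapping the colours of $v$ and $w$ in a frozen colouring $\c$ is always proper: frozenness forces $\c$ to be a bijection from $N[v]$ to $L(v)\setminus\{\c(v)\}$, so $w$ is the unique neighbour of $v$ coloured $\c(w)$, and symmetrically for $w$. After the swap, any $x\in N(v)\setminus N[w]$ sees the colour $\c(v)$ vanish from its neighbourhood, hence becomes unfrozen. Crucially, the condition $N[v]\ne N[w]$ is purely structural, not colour-dependent, and is available at every vertex of a connected non-complete graph (if $N[v]=N[w]$ for all $w\in N(v)$ then $G=K_{|N[v]|}$). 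So the paper picks a maximum independent set $I$ in $G^4$, selects for each $v_i\in I$ one edge $v_iw_i$ with $N[v_i]\ne N[w_i]$, and takes $S=\{v_iw_i\}$ \emph{once and for all}. The sets $N[v_i]\cup N[w_i]$ are pairwise disjoint, so swapping on any $M\subseteq S$ gives a proper colouring with $x_{v_iw_i}$ unfrozen iff $v_iw_i\in M$; the injection $(M,\c)\mapsto g_M(\c)$ is then immediate, and $|S|=|I|\ge n/\Delta^4$. Your ``switchable configuration'' and its obstacle never arise.
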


\begin{proof}
    Consider a set $S=\{v_1w_1,v_2w_2,\ldots, v_sw_s\}$ of edges such that $(N[v_i] \cup N[w_i])_{i\in\{1,\ldots,s\}}$ are disjoint sets of vertices, and $N[v_i] \neq N[w_i]$ for all $i$.
    Given a colouring $c$ and edge $vw$, the \emph{swap of~$c$ at $vw$} is the colouring $c'$ given by $c'(v)=c(w), c'(w)=c(v)$ and $c'(x)=c(x)$ for all $x\notin \{v,w\}$.
    For a subset $M \subseteq S$, the swap of $c$ at $M$ is obtained by swapping at each edge of $M$.
    We claim that if $c$ is frozen and $M \neq \varnothing$, then the swap at $M$ is proper and unfrozen.
    If $M$ consists of one edge $vw$ this follows because $c(N[v])=L(v)$ and $c(N[w])=L(w)$ in a frozen colouring, ensuring the swap of $c$ at~$vw$ remains proper.
    Since $N[v]\neq N[w]$, without loss of generality there is a vertex $x_{vw} \in N[v]\setminus N[w]$, so in the swap of $c$ at $vw$ the colour $c(v)$ is in $L(x_{vw})\setminus N[x_{vw}]$, showing that the swap is unfrozen.
    The fact that the sets $(N[v_i] \cup N[w_i])_{i\in\{1,\ldots,s\}}$ are disjoint ensures that the vertices~$x_{v_iw_i}$ are disjoint and that each of them is adjacent to at most one edge in $S$, so that swapping additional edges of~$S$ preserves their unfrozenness.

    Let $g_M: \CLG^f \rightarrow \CLG$ be the function that maps a frozen colouring $c$ to the colouring swapped at $M$.
    By the above $\{g_M(\CLG^f) : \varnothing \neq M\subseteq S\}$ are disjoint sets of unfrozen colourings, each of size $|\CLG^f|$.
    There are $2^{|S|} -1$ non-empty choices for $M$, so
    \[\frac{|\CLG^f|}{|\CLG|} \leq 2^{-|S|}.\]

    It remains to show that $S$ can be chosen of size at least $n/\Delta^4$.
    For this consider a maximum independent set $I$ of $G^4$.
    This ensures that for any two vertices $v_i,v_j \in I$, the distance between $v_i$ and $v_j$ is at least $5$.
    Note that for every $v_i\in I$, there is an edge $v_iw_i$ of $G$ such that $N[v_i]\neq N[w_i]$ (indeed: if not, then $G$ is the complete graph on $N[v_i]$).
    For every $v_i\in I$ select one arbitrary such edge $v_iw_i$, and include it in $S$.
    Since the distance between $v_i$ and $v_j$ is at least $5$, we have that $N[v_i]\cup N[w_i]$ and $N[v_j]\cup N[w_j]$ are disjoint, for all distinct $v_i,v_j\in I$.
    Since the maximum degree of $G^4$ is less than~$\Delta^4$, it follows that $|S|=|I| \ge n/ \Delta^4$, as desired.
\end{proof}

\section{Sharpness and Open Questions}
\label{sec:sharpness}

It is natural to ask if our Main Results can be strengthened or extended.
To show that we cannot strengthen the Main Theorem by lowering even just one list-size, we first describe examples that imply the Shattering Observation.
After that, we discuss some possible extensions of our Main Results, particularly those for correspondence colouring.

\begin{lem}
\label{basic-shatter-lem}
    There exist connected graphs $G$ with list-assignments $L$ in which $|L(w)|= \deg(w)$ for one vertex $w$ and $|L(v)| \geq \deg(v)+1$ for all other vertices, such that the $L$-recolouring graph has exponentially many non-singleton components.
\end{lem}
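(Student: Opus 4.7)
The plan is to exhibit, for every integer $k\ge 1$, a connected graph $G_k$ on $n=\Theta(k)$ vertices together with a list-assignment $L_k$ meeting the hypotheses, such that $\hCo{G_k}{L_k}$ has at least $2^{\Omega(k)}$ components.

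The construction I have in mind consists of $k$ disjoint copies of a small ``shattering gadget'' $H$, all attached to a single hub vertex $w$ chosen to be the unique deficient vertex with $|L_k(w)|=\deg_{G_k}(w)$. The prototype for $H$ is the $5$-cycle with lists of size $3$: as discussed earlier in the paper, this graph has two reconfiguration components distinguished by a winding-number invariant. The lists will be tuned so that (i) in every unfrozen $L_k$-colouring the hub $w$ is itself frozen --- this is to follow from $|L_k(w)|=\deg(w)$ together with the attachment design, which ensures that all $\deg(w)-1$ alternative colours of $L_k(w)$ appear on $N(w)$ --- and (ii) once $w$'s colour is fixed, each gadget independently contributes at least two non-singleton reconfiguration components via its local winding-number invariant.

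Given (i) and (ii) the proof concludes quickly: since $w$ is frozen in every unfrozen colouring, no recolouring step can cross between distinct gadgets, so $\hCo{G_k}{L_k}$ decomposes as a direct product over the $k$ gadgets with $w$'s colour held fixed. Multiplying the per-gadget counts then yields at least $2^k$ components, which is exponential in $n=\Theta(k)$ as required.

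The main obstacle is arranging (i) and (ii) simultaneously. Attaching a $5$-cycle to $w$ via a single vertex raises the degree of the attachment vertex to $3$, which forces its list to have size at least $4$; this extra colour threatens to destroy the winding-number invariant that drives the gadget's shattering. Resolving this requires a delicate list choice at each attachment vertex --- for instance assigning a ``private'' fourth colour unique to that gadget, so that using it sends the gadget into a separate non-shattering regime, while staying in the original three-colour regime preserves the winding invariant --- together with a finite case analysis verifying that the hub $w$ is indeed frozen in every unfrozen colouring, and that both winding classes inside each gadget contain unfrozen states and remain mutually unreachable.
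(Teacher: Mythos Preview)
Your approach has a genuine gap, and it is precisely the one you flag as ``the main obstacle'' without resolving. You need the hub $w$ to be frozen in \emph{every} unfrozen $L_k$-colouring. With $|L(w)|=\deg(w)=k$, this forces the $k$ neighbours of $w$ (one attachment vertex per gadget) to cover all $k-1$ colours of $L(w)\setminus\{\text{colour of }w\}$ in every such colouring. But your gadgets are meant to run on a three-colour palette so that the winding invariant survives; hence the attachment vertices together use at most three distinct colours, far short of $k-1$ once $k\ge 5$. Your suggested fix---a private fourth colour $c_i$ at each attachment vertex with $L(w)=\{c_1,\dots,c_k\}$---does not close this: if the attachment vertex of gadget $i$ uses a colour from $\{1,2,3\}$ (which it must be free to do, else the winding invariant is moot), then $c_i$ is absent from $N(w)$ and $w$ becomes unfrozen. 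Conversely, if you force the attachment vertex to always use $c_i$, you have pinned one vertex of the $5$-cycle, and then the remaining $4$-path with $3$-lists no longer has two inequivalent classes. So as stated, (i) and (ii) are in direct tension, and the ``delicate list choice'' you allude to is not actually specified.

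By contrast, the paper's construction is much simpler and avoids gadgets and invariants entirely. Take $K_n$ with every list equal to $\{1,\dots,n\}$, so that all $n!$ proper colourings are frozen. Now attach a single pendant vertex $x$ to one vertex $w$ of the clique and give $x$ a list $\{n+1,\dots,n+p\}$ disjoint from the clique lists, with $p\ge 2$. Then $w$ has degree $n$ and list size $n$, while every other vertex (including $x$) satisfies $|L(v)|\ge\deg(v)+1$. In any $L$-colouring the clique part is still frozen, but $x$ can always be recoloured, so every colouring is unfrozen. The reconfiguration graph is therefore $n!$ disjoint paths of length $p-1$: exponentially many non-singleton components with essentially no analysis required.
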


\begin{proof}
    Let $K_n$ be the complete graph in which all vertices are assigned the same list  $L(v) =\{1,\ldots,n\}$.
    Then we have $|L(v)| \geq \deg(v)+1$ for all vertices and $\Co{K_n}{L}$ has just $n!$ singleton components.
    Add a new vertex $x$, connect it to one vertex $w$ of $K_n$, and set $L(x) = \{n+1,\ldots,n+p\}$ for some $p \ge 2$.
    The resulting graph and list-assignment satisfies the conditions in the claim, and the $L$-recolouring graph is formed of $n!$ components of size $p$.
\end{proof}

While the simple example given in~\cref{basic-shatter-lem} is very irregular, it is not too difficult to construct $\Delta$-regular examples as well, for any odd $\Delta\geq 5$.
The examples we found use many distinct lists, though.
As a counterpoint to Theorem~\ref{thm:fjp}, it could be interesting to try to strengthen Lemma~\ref{basic-shatter-lem} by restricting to $\Delta$-regular graphs with essentially equal list-assignments: $L(w)=\{1,\dots,\Delta\}$ for some $w$, and $L(v)=\{1,\dots,\Delta+1\}$ for all other $v$.

Note also that the examples in the proof of Lemma~\ref{basic-shatter-lem} have connectivity~$1$.
In fact, we don't know how to construct examples with larger connectivity, which raises the following question.

\begin{question}
\label{que:conn}
    Can the Main Theorem be extended to $k$-connected graphs, if at most $k-1$ vertices~$v$ have lists with size smaller than $\deg(v)+1$?
\end{question}

One observation that supports such an extension is the following.
Suppose that $G$ is a $k$-connected graph with a list-assignment $L$ such that $|L(w)|\le\deg(w)$ for at most $k-1$ vertices $w\in V(G)$, and $|L(v)|\ge\deg(v)+1$ for all other $v\in V(G)$.
Then if we remove the vertices with small lists, the remaining graph is a connected graph satisfying the condition in the Main Theorem, but it is not obvious what this means for the structure of $\CLG$.

Another possible strengthening of our main results involves reducing the upper bound of the diameter of the $L$-recolouring graph.
Since the diameter $\diam(\Co{P_n}{3})$ of the $3$-recolouring graph of the path $P_n$ is $\Theta(n^2)$, a quadratic bound is best possible in general.
However, is the presence of long induced paths of degree-$2$ vertices necessary for the $L$-recolouring graph to have such a large diameter?
We propose the following conjectures. 

\begin{conj}
\label{conj:diameter}
    Let $G$ be a connected graph with $n$ vertices, and let $L$ be a list-assignment of~$G$.
    \begin{list}{}{%
    \setlength{\itemsep}{0pt}
    \setlength{\topsep}{2pt}
    \setlength{\labelsep}{0.5\parindent}
    \setlength{\labelwidth}{\parindent}
    \setlength{\leftmargin}{1.5\parindent}
    \setlength{\itemindent}{0pt}
    \setlength{\listparindent}{0pt}}
    \item[\rm (a)\hss]\label{conj:keylemma} If, in addition to the conditions in the Key Lemma, $G$ has minimum degree at least $3$, then $\CLG$ is connected and has diameter $O(n)$.
    \item[\rm (b)]\label{conj:mainth} If, in addition to the conditions in the Main Theorem, $G$ has minimum degree at least $3$, then $\hCLG$ is connected and has diameter $O(n)$.
    \item[\rm (c)]\label{conj:mainths} If, in addition to the conditions in the Main Theorem, $G$ has no path of degree-$2$ vertices of length more than $t$ for some $t\ge0$, then  $\hCLG$ is connected and has diameter $O((t+1)n)$.
    \end{list}
\end{conj}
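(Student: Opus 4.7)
The plan is to handle the three parts in sequence. Part (a) is the fundamental obstacle: Lemma \ref{lem:degreeplus_linearbound} already gives the weaker $O(\overline{d}\, n)$ bound, so the task is to eliminate the dependence on average degree. Parts (b) and (c) will then reduce, with some effort, to (a) combined with the block-decomposition framework of \Cref{connect-sec}. First I would verify that the first two stages of the sketch following Lemma \ref{lem:degreeplus_linearbound} (pushing unfrozenness along paths, and simultaneously unfreezing a maximal distance-$10$ net $X$ containing the special vertex $x$) already cost only $O(n)$ recolourings, so the entire excess sits in the third stage, where each removed vertex $w$ pays $\deg(w) + O(1)$ steps.

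To remove the $\deg(w)$ factor in stage three, I would refine the call to \Cref{lem:recolouringToUniqueBadNeighbour}: rather than touching every neighbour of $w$ that currently carries colour $\beta(w)$ (of which there can be many), I would attempt a global amortization. Concretely, process the vertices in a carefully chosen order along a spanning BFS tree rooted at $x$, proceeding inwards from leaves, and charge each recolouring step either to the edge of the tree along which the step propagates or to the vertex being fixed for the last time. The target invariant is that every vertex is recoloured $O(1)$ times in aggregate. One promising device is to adapt the ``staircase'' argument of Bousquet, Feuilloley, Heinrich, and Rabie~\cite{BFHR}, which achieves linear diameter for $\hCo{G}{\Delta+1}$, into the local-list setting by exploiting that the reduced list of a to-be-removed leaf in the spanning tree gains one slack compared to its degree in the remaining subgraph. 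The hard part, and what I expect to be the true obstacle, is controlling interactions between stage-three recolourings of different vertices of $X$: clearing bad neighbours for $w_1$ may create bad neighbours for $w_2$, and without a clever potential function the naive analysis inevitably reintroduces a $\overline{d}$ factor.

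For (b), granted (a), the plan is to re-run the induction in the proof of \Cref{connected-thm} with the $O(n)$ Key Lemma substituted wherever $O(n^2)$ appears. The minimum-degree-$3$ hypothesis forces every endblock to be neither $K_2$ nor a cycle, eliminating Claims~\ref{clm3B} and \ref{clm4B} as well as the two pendant-path subcases at the end of the proof, which are precisely the subroutines that contribute a quadratic term. The 2-connected base cases are \Cref{regular-thm} and \Cref{2connected-lem}, each of which invokes the Key Lemma a bounded number of times and otherwise performs $O(1)$ recolouring per vertex touched; together with linearity of the Key Lemma this gives $O(|V(H)|)$ per block. Since blocks are edge-disjoint, summing over blocks yields $O(n)$.

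For (c), the observation is that the only source of the $n^2$ factor in \Cref{connected-thm} is the handling of maximal paths $P$ of degree-$2$ vertices hanging off a vertex $w$ of degree at least $3$: the lemmas in \Cref{sec4.1} that recolour such a path pay $\Theta(|P|)$ per vertex recoloured, giving $\Theta(|P|^2)$ overall. Under the hypothesis $|P|\le t$ each such path costs $O(t \cdot |P|)$, so summed over all tail-paths the total is $O(t\, n)$, while the 2-connected portions cost $O(n)$ by part (b); combining gives the desired $O((t+1)n)$. The main care needed here is to verify that every recursive step in the proof of \Cref{connected-thm} either operates inside a single length-$\le t$ tail or else on a 2-connected subgraph inheriting $\delta\ge 3$, so that no hidden quadratic cost reappears through a cascade of reductions.
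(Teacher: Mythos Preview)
The statement you are attempting to prove is a \emph{conjecture}, and the paper does not contain a proof of it. Immediately after stating it, the authors remark that \Cref{lem:degreeplus_linearbound} gives only the weaker $O(\overline{d}n)$ bound for part~(a), and that for part~(b) even the special case of standard $(\Delta+1)$-colouring would significantly improve on the $f(\Delta)\cdot n$ bound of Bousquet et al.~\cite{BFHR}, whose $f$ is superexponential in $\Delta$. So there is nothing in the paper to compare your proposal against; you are proposing a research programme for an open problem.

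As a plan, your structure is sensible: parts~(b) and~(c) do plausibly reduce to~(a) via the block-decomposition in \Cref{connect-sec}, and you are right that the minimum-degree-$3$ hypothesis kills the pendant-path cases that produce the quadratic term. But the core of the proposal, part~(a), is not a proof. You correctly locate the difficulty in stage three of the sketch following \Cref{lem:degreeplus_linearbound}, where each vertex $w$ currently pays $\deg(w)+O(1)$. Your proposed fix is an amortized charging scheme plus an adaptation of the staircase argument of~\cite{BFHR}, but you do not actually carry out either, and you yourself name the obstruction: recolouring bad neighbours of one vertex can create bad neighbours of another, and without an explicit potential function the analysis reverts to $O(\overline{d}n)$. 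That missing potential function \emph{is} the conjecture; everything else in your plan is scaffolding around a gap you have not filled. Until you produce a concrete invariant that bounds the total number of recolourings by $O(n)$ irrespective of degrees, this remains a sketch of where the difficulty lies rather than a proof.
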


Regarding Conjecture~\ref{conj:keylemma}(a), note that \cref{lem:degreeplus_linearbound} provides a bound on the diameter of order $\overline{d}n$, where $\overline{d}$ is the average degree.
As a first step towards part~(b) of the conjecture,
recall the result of Bousquet et al.~\cite{BFHR} that $\diam(\Co{G}{\Delta})\le f(\Delta)\cdot n$ for standard colouring.
But their function $f$ is superexponential in $\Delta$, so proving part~(b) for standard colouring would already be a significant improvement over what is known.

\subsection{Correspondence Colouring}

As alluded to earlier, in most of our arguments towards the Key Lemma and the Main Theorem, it is not so relevant which specific colours are present in which list.
More relevant are the specific list-sizes as compared to their respective vertices' degrees.
To what extent are the colours themselves essential to the Main Results?

This question leads us naturally to a strengthened notion, one that has been intensively studied in chromatic graph theory in recent years, 
called correspondence colouring (also called DP-colouring)~\cite{DP}.
In this closing part of the paper, we discuss how we might be able (or not) to extend our results to this more general context.

To introduce the notion informally, suppose that we think of list colouring as ordinary colouring but under some adversarial choice of worst-case assignments of colour-lists at each vertex.
Analogously, correspondence colouring can be thought of as giving our adversaries even more power by letting them choose worst-case matchings of `colours' between the lists of adjacent vertices --- any pair of matched `colours' may not be used simultaneously in order for the colouring to be `proper'.

There are various equivalent definitions and notations, so we fix our formalisms for correspondence colouring as follows.
A (\emph{correspondence}-)\emph{cover} of a graph $G$ is a pair $(L,H)$, where $H$ is a graph and $L: V(G) \to 2^{V(H)}$ are such that the following hold:
    \begin{list}{}{%
    \setlength{\parsep}{0pt}
    \setlength{\topsep}{2pt}
    \setlength{\labelsep}{2mm}
    \setlength{\labelwidth}{1.0\parindent}
    \setlength{\leftmargin}{\parindent}\setlength{\itemindent}{0pt}
    \setlength{\listparindent}{0pt}}
    \item[$\bullet$\hss] the sets $\{L(v):v\in V(G)\}$ partition $V(H)$;
    \item[$\bullet$\hss] if there is an edge in $H$ between $L(u)$ and $L(v)$, then $uv$ is an edge of $G$;
    \item[$\bullet$\hss] for any $v\in V(G)$, $L(v)$ induces a clique in $H$;
    \item[$\bullet$\hss] for any $u,v\in V(G)$, the bipartite subgraph of $H$ induced between $L(u)$ and $L(v)$ is a matching.
\end{list}
(A cover is analogous to a list-assignment $L$.)
Given $k: V(G)\to \mathbb{N}$, we call the cover \emph{$k$-fold} if $|L(v)| \ge k(v)$ for all $v\in V(G)$.
(This is analogous to a lower bound condition on the list-sizes for~$L$.)
An $(L,H)$-colouring is an independent set of $H$ of size $|V(G)|$. 
(This is analogous to a proper $L$-colouring.)

Many prominent results for list colouring have been generalised to correspondence colouring as well.
But as our next example shows, this fails for our Main Theorem, even for cliques.

\begin{example}
\label{clique-example}
    Denote the vertices of $K_n$ by $v_1,\ldots,v_n$.
    Form the cover graph $(L,H)$ by connecting $(v_i,k)$ and $(v_j, \ell)$ for every $i \ne j$ if and only if $k=\ell \in \{3,\ldots,n\}$ or $k=1, \ell=2$ or $k=2, \ell=1$.
    That is, using a colour on a vertex forbids that same colour on each neighbour, except for colours~1 and~2.
    Using colour 1 on any vertex forbids 2 on its neighbours, and vice versa.
    Now $\Co{K_n}{(L,H)}$ has exactly two large components.
\end{example}

\begin{proof}
    By the pigeonhole principle, every $(L,H)$-colouring of $K_n$ has at least two vertices coloured with 1 or 2 (and these must be the same colour).
    So we have two components (with no frozen colourings), one containing the `all 1' colouring, and one containing the `all 2' colouring.
\end{proof}

Being a bit more precise, we can find $n-2$ exceptions.
Starting from a `straight' cover graph of~$K_n$ with lists of size $n$, we `twist' 1 and 2 on each edge in a subclique $K_q$ for some $q\in\{2,\ldots,n-1\}$.
(Note that we get isomorphic cover graphs by letting $q=n-1$ or $q=n$.)

\begin{example}
\label{exp:Kncorrespondence}
\label{clique-example-2}
    Denote the vertices of $K_n$ by $v_1,\ldots,v_n$.
    For a value $q\in\{2,\ldots,n-1\}$, form the cover graph $(L,H)$ by connecting $(v_i,k)$ and $(v_j, \ell)$ exactly when either
    \begin{list}{}{%
    \setlength{\parsep}{0pt}
    \setlength{\itemsep}{1pt}
    \setlength{\topsep}{2pt}
    \setlength{\labelsep}{2mm}
    \setlength{\labelwidth}{1.0\parindent}
    \setlength{\leftmargin}{\parindent}\setlength{\itemindent}{0pt}
    \setlength{\listparindent}{0pt}}
    \item[$\bullet$\hss] $k=\ell$ and $k \in\{3,\ldots,n\}$; or
    \item[$\bullet$\hss] $\max\{i,j\} \le q$ and $\{k,\ell\}=\{1,2\}$; or
    \item[$\bullet$\hss] $\max\{i,j\} > q$ and $k=\ell$.
    \end{list}
    Then $\Co{K_n}{(L,H)}$ has exactly two non-singleton components (as well as many frozen colourings if $q<n-1$).
\end{example}

\begin{proof}
    Consider an $(L,H)$-colouring $\a$.
    If $\a(v_i)\notin\{1,2\}$ for every $1 \le j \le q$, then all edges incident to $\a(v_i)$ in the correspondence-cover are straight for all $i\in\{1,\ldots,n\}$, so $\a$ is frozen.
    We define the sets~$A_i$ of colourings by $A_i=\{\a:\a(v_j)=i \text{ for some } j\le q \}$ for each $i\in\{1,2\}$.
    Note that $A_1\cap A_2 =\varnothing$ and $A_1\cup A_2$ contains all non-isolated vertices of $\Co{K_n}{(L,H)}$.
    It is easy to check that each $A_i$ induces a connected subgraph of $\Co{K_n}{(L,H)}$.
    If for a colouring in $A_i$ exactly one vertex~$v_j$ ($j \le q$) is coloured by $i$, then some $v_k$, for $k>q$, is coloured by $3-i$ and one cannot recolour $v_j$.
    In this case, the only valid recolouring step is to recolour $\a(v_{\ell})$ with $i$ for some $\ell \in\{1,\ldots,q\}\setminus \{j\}$.
    Thus~$A_1$ and $A_2$ are not connected.
\end{proof}

For $n=4$, in addition to the cases $q=2$ and $q=3$ in \Cref{clique-example-2}, we discovered a third $4$-fold correspondence-cover of $K_4$ for which $\Co{K_n}{(L,H)}$ has two non-singleton components.
Denote the vertices of $K_4$ by $v_1,\ldots,v_4$.
The cover graph $(L,H)$ is depicted in~\cref{fig:thirdexample_MobiusKantor}.
It has $L(v_i)=\{(v_i,j) : j \in\{1,2,3,4\}\}$ for every $i \in\{1,2,3,4\}$, and is obtained by adding the five edges
\[\{\,((v_1,0),(v_2,1)),\,((v_1,1),(v_2,0)),\, ((v_2,0),(v_3,2)),\,((v_2,2),(v_3,3)),\,((v_2,3),(v_3,0))\,\},\]
and the straight edges (edges of the form $(v_h,j),(v_i,j)$ for $i \not=h$) elsewhere such that every two lists $L(v_i)$ and $L(v_h)$ span a perfect matching.
Now $\Co{K_n}{(L,H)}$ has exactly two non-singleton components (each containing $24$ $(L,H)$-colourings).
The cover graph is obtained from the M\"{o}bius-Kantor graph, superimposed with the four list-cliques.

By computer we verified that for no other $4$-fold cover graphs does $\Co{K_4}{(L,H)}$ have multiple non-singleton components.
Those examples above suggests that a precise characterisation of `bad' cover graphs for $K_n$ may be hard.

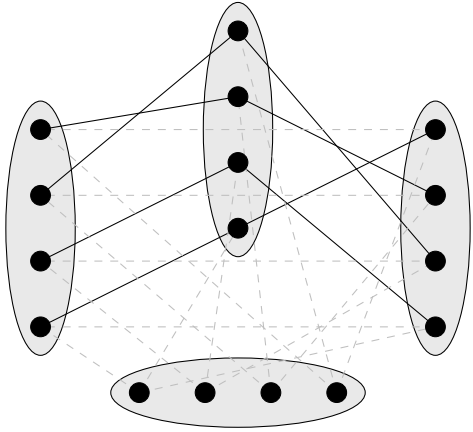
\begin{figure}[ht]
    \centering
    \begin{tikzpicture}[scale=0.875]

\foreach \x in {0,180}{
\draw[fill=gray!17] (\x:3) ellipse (15pt and 55pt);
}

\draw[fill=gray!17] (90:1.5) ellipse (15pt and 55pt);

\draw[fill=gray!17] (270:2.5) ellipse (55pt and 15pt);

 \foreach \r/\y in {-1.5/-1.5,-0.5/-0.5,0.5/1.5,1.5/0.5}{
 \draw(-3,\r)--(0,1.5+\y);
 }

\foreach \r/\y in {-1.5/1.5,-0.5/-1.5,1.5/-0.5,0.5/0.5}{
 \draw (0,1.5+\r)--(3,\y);
 }

\foreach \y in {-1.5,-0.5,0.5,1.5}{

\draw[gray!50, dashed] (\y,-2.5)--(3,\y);
\draw[gray!50, dashed] (\y,-2.5)--(-3,\y);
\draw[gray!50, dashed] (3,\y)--(-3,\y);
\draw[gray!50, dashed] (\y,-2.5)--(0,1.5+\y);
 }

\foreach \y in {-1.5,-0.5,0.5,1.5}{
\draw[fill] (-3,\y) circle (0.15);
\draw[fill] (3,\y) circle (0.15);
\draw[fill] (0,1.5+\y) circle (0.15);
\draw[fill] (\y,-2.5) circle (0.15);
}

 \end{tikzpicture}
    \caption{A $4$-fold correspondence-cover $(L,H)$ of $K_4$ for which $\hCo{K_4}{(L,H)}$ is not connected.}
    \label{fig:thirdexample_MobiusKantor}
\end{figure}

Despite the previous examples, we remark that the Key Lemma does hold for correspondence colouring, with a slightly modified proof.
In the proof of \Cref{lem:recolouringToUniqueBadNeighbour}, the bad neighbours need not form an independent set.
So when we recolour $w$ to unfreeze the frozen bad neighbours, and recolour the first bad neighbour, this could possibly freeze another bad neighbour.
Fortunately, if~$v$ has at least two bad neighbours, then $v$ is unfrozen (even for correspondence) so we can recolour~$v$ to unfreeze its bad neighbours.
(We also handle these bad neighbours of $w$ in order of non-increasing distance from $v$, to avoid undoing progress that we made previously.)
We may need a few more steps than for the list colouring case, but some version of the lemma still holds.

We don't have a good idea of what results might be possible for correspondence colouring,
similar to those for list colouring.
So to conclude the paper, we pose a few questions that might lead us to a better understanding.

\begin{question}\mbox{ }
    \label{corr-open-probs}
    \begin{list}{}{%
    \setlength{\itemsep}{0pt}
    \setlength{\topsep}{2pt}
    \setlength{\labelsep}{0.5\parindent}
    \setlength{\labelwidth}{\parindent}
    \setlength{\leftmargin}{1.5\parindent}
    \setlength{\itemindent}{0pt}
    \setlength{\listparindent}{0pt}}
    \item[\rm (a)\hss] What characterises the `bad' covers of the clique~$K_n$?
    When $n=3$ we have only three non-isomorphic covers, one of which is equivalent to the case of standard colouring.
    Exactly one of these is bad.
    When $n=4$ we have $75$ non-isomorphic covers; a computer search showed that exactly three are bad.
    
    \item[\rm (b)] What is the best possible diameter bound for the Key Lemma with correspondence colouring? 
    Is it $O(n)$ (as we conjecture for list colouring in~\cref{conj:diameter}~(a))? 
    \item[\rm (c)] Does some version of the Main Theorem hold for correspondence colouring, possibly with additional well-understood exceptional graphs?
    \end{list}
\end{question}

\subsection*{Acknowledgements}
The research for this paper was started during a visit of SC, WCvB, and JvdH to the Korteweg--de Vries Institute.
We like to thank the institute, and in particular the members of the Discrete Mathematics \& Quantum Information group for their hospitality and the friendly and inspiring atmosphere; and for the cookies, of course.

\subsection*{Open access statement}
For the purpose of open access, a CC BY public copyright license is applied to any Author Accepted Manuscript (AAM) arising from this submission.

{\small{
%\printbibliography
\bibliographystyle{habbrv}
\bibliography{reconfiguration}
}}

\end{document}